\documentclass[a4paper,fleqn,leqno]{article}

\usepackage{latexsym,amssymb,amsthm,amsmath,amsfonts,mathtools,epsfig,color,epsf,graphicx,mathrsfs}
\usepackage[english]{babel}

\usepackage[utf8]{inputenc} 
\usepackage[T1]{fontenc}  
\usepackage{lmodern}  
\usepackage{tikz}
\usepackage{doi}
\usetikzlibrary{trees}
\usetikzlibrary{decorations.pathreplacing}
\usepackage{tabu}
\usepackage{array,multirow,multicol}
\usepackage{hyperref}       
\usepackage{url}            
\usepackage{booktabs}       
 \usepackage{nicefrac}       
\usepackage{microtype}      
\usepackage{lipsum}

\newcommand{\N}{\mathbb{N}}
\newcommand{\R}{\mathbb{R}}
\renewcommand{\P}{\mathbb{P}}
\newcommand{\Z}{\mathbb{Z}}
\newcommand{\E}{\mathbb{E}}

\renewcommand{\d}{{\rm d}}
\newcommand{\e}{{\rm e}}


\makeatletter
 \@addtoreset{equation}{section}
 \makeatother

 \newcounter{extralabel}[section]

 \newtheorem{ittheorem}{Theorem}
 \newtheorem{itlemma}{Lemma}
 \newtheorem{itproposition}{Proposition}
 \newtheorem{itdefinition}{Definition}
 \newtheorem{itcorollary}{Corollary}
 \newtheorem{itconjecture}{Conjecture}
 \newtheorem{itremark}{Remark}
 \theoremstyle{definition}
 \newtheorem{itassumption}{Assumption}

 \newenvironment{theorem}{\addtocounter{extralabel}{1}
 \begin{ittheorem}}{\end{ittheorem}}

 \newenvironment{lemma}{\addtocounter{extralabel}{1}
 \begin{itlemma}}{\end{itlemma}}

 \newenvironment{proposition}{\addtocounter{extralabel}{1}
 \begin{itproposition}}{\end{itproposition}}

 \newenvironment{definition}{\addtocounter{extralabel}{1}
 \begin{itdefinition}}{\end{itdefinition}}

 \newenvironment{corollary}{\addtocounter{extralabel}{1}
 \begin{itcorollary}}{\end{itcorollary}}

 \newenvironment{remark}{\addtocounter{extralabel}{1}
 \begin{itremark}}{\end{itremark}}

\newenvironment{assumption}{\addtocounter{extralabel}{1}
	\begin{itassumption}}{\end{itassumption}}

\textwidth15cm
\textheight24cm
\hoffset=-1cm
\voffset=-2cm


\title{Spatially Inhomogeneous Populations with Seed-banks\\
	\Large I. Duality, Existence and Clustering.
}

\begin{document}

\author{Frank den Hollander$^1$, Shubhamoy Nandan$^2$}

\date{\today}

\maketitle

\begin{abstract}
We consider a system of interacting Moran models with seed-banks. Individuals live in colonies and are subject to resampling and migration as long as they are \emph{active}. Each colony has a seed-bank into which individuals can retreat to become \emph{dormant}, suspending their resampling and migration until they become active again. The colonies are labelled by $\Z^d$, $d \geq 1$, playing the role of a \emph{geographic space}. The sizes of the active and the dormant population are \emph{finite} and depend on the \emph{location} of the colony. Migration is driven by a random walk transition kernel. Our goal is to study the equilibrium behaviour of the system as a function of the underlying model parameters.

In the present paper we show that, under mild condition on the sizes of the active population, the system is well-defined and has a dual. The dual consists of a system of \emph{interacting} coalescing random walks in an \emph{inhomogeneous} environment that switch between an active state and a dormant state. We analyse the dichotomy of \emph{coexistence} (= multi-type equilibria) versus \emph{clustering} (= mono-type equilibria), and show that clustering occurs if and only if two random walks in the dual starting from arbitrary states eventually coalesce with probability one. The presence of the seed-bank \emph{enhances genetic diversity}. In the dual this is reflected by the presence of time lapses during which the random walks are dormant and do not move.

\medskip\noindent
\emph{Keywords:} 
Moran model, resampling, seed-bank, migration, duality, coexistence versus clustering.

\medskip\noindent
\emph{MSC 2010:} 
Primary 
60J70, 
60K35; 
Secondary 
92D25. 

\medskip\noindent 
\emph{Acknowledgements:} 
The research in this paper was supported by the Netherlands Organisation for Scientific Research (NWO) through grant TOP1.17.019. The authors thank Simone Floreani, Cristian Gardin\`a and Frank Redig for various discussions on duality.

\end{abstract}

\bigskip

\footnoterule
\noindent
\hspace*{0.3cm} {\footnotesize $^{1)}$ 
Mathematisch Instituut, Universiteit Leiden, Niels Bohrweg 1, 2333 CA  Leiden, NL\\
denholla@math.leidenuniv.nl}\\
\hspace*{0.3cm} {\footnotesize $^{2)}$ 
Mathematisch Instituut, Universiteit Leiden, Niels Bohrweg 1, 2333 CA  Leiden, NL\\
s.nandan@math.leidenuniv.nl}


\newpage

\tableofcontents

\newpage


\section{Background, motivation and outline}

Dormancy is an evolutionary trait observed in plants, bacteria and other microbial populations, where an organism enters a reversible state of low metabolic activity as a response to adverse environmental conditions. The dormant state of an organism in a population is characterised by interruption of basic reproduction and phenotypic development during periods of environmental stress \cite{LJ11,LS}. The dormant organisms reside in what is called a \emph{seed-bank} of the population. After a varying and possibly large number of generations, dormant organisms can be resuscitated under more favourable conditions and reprise reproduction after becoming \emph{active} by leaving the seed-bank. This strategy is known to have important implications for the genetic diversity and overall fitness of the underlying population \cite{LJ11,LHBB}, since the seed-bank of a population often acts as a \emph{buffer} against evolutionary forces such as genetic drift, selection and environmental variability. The importance of dormancy has led to several attempts to model seed-banks from a mathematical perspective (\cite{BCEK15,BCKW16}; see also \cite{BK} for a broad overview).

In \cite{BCEK15} and \cite{BCKW16}, the Fisher-Wright model with \emph{seed-bank} was introduced and analysed. In the Fisher-Wright model with seed-bank, individuals live in a colony, are subject to \emph{resampling} where they adopt each other's type, and move in and out of the seed-bank where they suspend resampling. The seed-bank acts as a repository for the genetic information of the population. Individuals that reside inside the seed-bank are called \emph{dormant}, those that reside outside are called \emph{active}. Both the long-time behaviour and the genealogy of the population were analysed for the continuum model obtained by letting the size of the colony tend to infinity, called the Fisher-Wright diffusion with seed-bank. 

In \cite{GHO1}, \cite{GHO2}, \cite{GHO3} the continuum model was extended to a \emph{spatial} setting in which individuals live in multiple colonies, labelled by a countable Abelian group playing the role of a \emph{geographic space}. In the spatial model with seed-banks, each colony is endowed with its own seed-bank and individuals are allowed to \emph{migrate} between colonies. The goal was to understand the change in behaviour compared to the spatial model without seed-bank. 

Most papers on seed-banks deal with the large-colony-size limit, for which the evolution is described by a system of coupled SDE's. In \cite{HP}, a multi-colony Fisher-Wright model with seed-banks was introduced where the colony sizes are finite. However, this model is restricted to \emph{homogeneous} population sizes and a finite geographic space. The present paper introduces an individual-based spatial model with seed-banks in continuous time where the sizes of the underlying populations are \emph{finite} and \emph{vary} across colonies. The latter make the model more interesting from a biological perspective, but raise extra technical challenges. The key tool that we use to tackle these challenges is \emph{stochastic duality}\cite{GKRV,CGGR}. The spatial model introduced in this paper fits in the realm of interacting particle systems, which often admit additional structures such as duality\cite{L,RS}. In particular, our spatial model can be viewed as a \emph{hybrid} of the well-known Voter Model and the \emph{generalized Symmetric Exclusion Process}, $2j$-SEP, $j\in\N/2$ \cite{CGRS,GKRV,L99}. Both the Voter Model and the $2j$-SEP enjoy the stochastic duality property, and our system inherits this as well: it is dual to a system consisting of \emph{coalescing} random walks with \emph{repulsive} interactions. The resulting dual process shares striking resemblances with the dual processes of the Voter Model and $2j$-SEP, because the original process is a modified hybrid of them. It has been recognised in the literature \cite{VGO,LHBB,LJ11,BCEK15,BCKW16} that qualitatively different behaviour may occur when the exit time of a typical individual from the seed-bank can become large. In the present paper we are able to model this phenomenon as well, due to the \emph{inhomogeneity} in the seed-bank sizes. Our main goals are the following: 
\begin{itemize}
	\item[(1)]
	Introduce a model with seed-banks whose size is \emph{finite} and depends on the geographic location of the colony. Prove \emph{existence} and \emph{uniqueness} of the process via well-posedness of an associated martingale problem and duality with a system of interacting coalescing random walks.   
	\item[(2)] 
	Identify a criterion for {\em coexistence} (= convergence towards multi-type equilibria) and \emph{clustering} (= convergence towards mono-type equilibria). Show that there is a one-parameter family of equilibria controlled by the density of types.
	\item[(3)] 
	Identify the \emph{domain of attraction} of the equilibria. 
	\item [(4)] 
	Identify the \emph{parameter regime} under which the criterion for clustering is met. In case of clustering, find out how fast the mono-type clusters grow in space-time. In case of coexistence, establish mixing properties of the equilibria.
\end{itemize}
In the present paper we settle (1) and (2). In~\cite{HN01} we will address (3) and (4). We focus on the situation where the individuals can be of \emph{two types}. The extension to infinitely many types, called the Fleming-Viot measure-valued diffusion, only requires standard adaptations and will not be considered here.  

The paper is organised as follows. In Section~\ref{s.dichotomy} we give a quick definition of the model and state our main theorems about the well-posedness, the duality and the clustering criterion. In Section~\ref{s.basics} we give a more detailed definition of the model, prove that the martingale problem associated with its generator is well-posed, establish duality with an interacting seed-bank coalescent, demonstrate that the system exhibits a dichotomy between clustering and coexistence, and formulate a necessary and sufficient condition for clustering to prevail in terms of the dual, called the \emph{clustering criterion}. Sections~\ref{s.duality}--\ref{s.criterion} are devoted to the proof of our main theorems.


\section{Main theorems}
\label{s.dichotomy}

In Section~\ref{ss.quick} we give a quick definition of the system. In Section~\ref{ss.wpd} we argue that, under mild conditions on the sizes of the active population, the system is well-defined and has a dual that consists of finitely many interacting coalescing random walks.

\subsection{Quick definition of the multi-colony system} 
\label{ss.quick}

Individuals live in colonies labelled by $\Z^d$, $d \geq 1$, which plays the role of a \emph{geographic space}. (In what follows, the geographic space can be any countable Abelian group.) Each colony has an \emph{active} population and a \emph{dormant} population. Each individual carries one of two \emph{types}: $\heartsuit$ and $\spadesuit$. Individuals are subject to: 
\begin{itemize}
	\item[(1)]
	Active individuals in any colony \emph{resample} with active individuals in \emph{any} colony.
	\item[(2)]
	Active individuals in any colony \emph{exchange} with dormant individuals in the \emph{same} colony.
\end{itemize} 
For (1) we assume that each active individual at colony $i$ at rate $a(i,j)$ uniformly draws an active individual at colony $j$ and \emph{adopts its type}. For (2) we assume that each active individual at colony $i$ at rate $\lambda$ uniformly draws a dormant individual at colony $i$ and the two individuals \emph{trade places while keeping their type} (i.e., the active individual becomes dormant and the dormant individual becomes active). Note that dormant individuals do \emph{not} resample.

At each colony $i$ we register the pair $(X_i(t),Y_i(t))$, representing the number of active, respectively, dormant individuals of type $\heartsuit$ at time $t$ at colony $i$. We write $(N_i,M_i)$ to denote the \emph{size} of the active, respectively, dormant population at colony $i$. The resulting Markov process is denoted by
\begin{equation}
	\label{process}
	(Z(t))_{t \geq 0}, \qquad Z(t) = ((X_i(t),Y_i(t))_{i \in \Z^d},
\end{equation}
and lives on the state space
\begin{equation}
	\mathcal{X} := \prod_{i\in\Z^d} [N_i]\times[M_i],
\end{equation}
where $[n] = \{0,1,\ldots,n\}$, $n \in \N$. In Section~\ref{ss.multi} we will show that, under mild assumptions on the model parameters, the Markov process in \eqref{process} is well defined and has a \emph{dual} $(Z^*(t))_{t \geq 0}$. The latter consists of finite collections of particles that perform \emph{interacting coalescing} random walks, with rates that are controlled by the model parameters. 

Let $\mathcal{P}$ be the set of probability distributions on $\mathcal{X}$ defined by
\begin{equation} 
	\mathcal{P} := \big\lbrace\mathcal{P}_\theta\colon\,\theta \in [0,1]\big\rbrace,
	\qquad \mathcal{P}_\theta := \theta \bigotimes_{i\in\Z^d}\delta_{(0,0)} + (1-\theta) \bigotimes_{i\in\Z^d}\delta_{(N_i,M_i)}.
\end{equation}
We say that \eqref{process} exhibits \emph{clustering} if the distribution of $Z(t)$ converges to a limiting distribution $\mu\in\mathcal{P}$ as $t\to\infty$. Otherwise, we say that it exhibits \emph{coexistence}. In Section~\ref{ss.multi} we will show that clustering is equivalent to \emph{coalescence} occurring eventually with probability $1$ in the dual consisting of \emph{two} particles. This will be the main route to the dichotomy.

For simplicity we let the exchange rate $\lambda \in (0,\infty)$ be the same for every colony, and let the \emph{migration kernel} be translation invariant and irreducible.

\begin{assumption}{\bf [Homogeneous migration]}
	\label{ass:kernel}
	The migration kernel $a(\cdot,\cdot)$ satisfies:
	\begin{itemize}
		\label{assumpt1}
		\item $a(\cdot,\cdot)$ is irreducible in $\Z^d$.
		\item 
		$a(i,j) = a(0,j-i)$ for all $i,j\in\Z^d $.
		\item  
		$c:=\displaystyle\sum_{i\in\Z^d} a(0,i) < \infty$ and $a(0,0)=\frac{1}{2}$. \hfill$\Box$
	\end{itemize}
\end{assumption}

\noindent
The former of the last two assumptions ensures that the way genetic information moves between colonies is homogeneous in space, while the latter ensures that the total rate of resampling is finite and that resampling is possible also at the same colony. Since it is crucial for our analysis that the population sizes remain constant, we view migration as a change of types without the individuals actually moving themselves. In this way, genetic information moves between colonies while the individuals themselves stay put.

We write
\begin{equation}
	K_i=\frac{N_i}{M_i}, \qquad i\in\Z^d, 
\end{equation}
to denote the \emph{ratio} of the size of the active and the dormant population in colony $i$.


\subsection{Well-posedness and duality}
\label{ss.wpd}

\begin{theorem}{\bf [Well-posedness and duality]}
	\label{T.WPD}
	Suppose that Assumption~{\rm \ref{assumpt1}} is in force. Then the Markov process $(Z(t))_{t\geq 0}$ in \eqref{process} has a factorial moment dual $(Z^*(t))_{t\geq 0}$ living in the state space $\mathcal{X}^*\subset \mathcal{X}$ consisting of all  configurations with finite mass, and the martingale problem associated with \eqref{process} is well-posed under either of the two following conditions:
	\begin{itemize}
		\item[(a)]
		$\lim_{\|i\| \to \infty} \|i\|^{-1} \log N_i = 0$ and $\sum_{i\in\Z^d} \e^{\delta \|i\|} a(0,i)<\infty$ for some $\delta>0$,
		\item[(b)]
		$\sup_{i\in\Z^d\backslash\{0\}} \|i\|^{-\gamma} N_i < \infty$ and $\sum_{i\in\Z^d} \|i\|^{d+\gamma+\delta} a(0,i)<\infty$ for $\gamma>0$ and some $\delta>0$.
	\end{itemize}
\end{theorem}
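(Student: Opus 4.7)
The plan is to write down the generator $L$ of the Markov process, exhibit its conjectured dual generator $L^*$ together with a candidate duality function $D$, verify the generator-level duality relation $LD(\cdot,z^*)(z) = L^* D(z,\cdot)(z^*)$, and use this duality together with a finite-volume approximation scheme to establish well-posedness. For the duality function I would take the normalised falling factorial
\begin{equation*}
D(z,z^*) = \prod_{i\in\Z^d}\frac{x_i^{(n_i)}}{N_i^{(n_i)}}\,\frac{y_i^{(m_i)}}{M_i^{(m_i)}},
\end{equation*}
where $z=((x_i,y_i))_{i\in\Z^d}$ is the forward configuration, $z^*=((n_i,m_i))_{i\in\Z^d}\in\mathcal{X}^*$ counts dual active and dormant particles, and $n^{(k)}=n(n-1)\cdots(n-k+1)$. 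This is the standard factorial moment duality function for the Voter model and the generalised symmetric exclusion process $2j$-SEP, and its success here hinges on the hybrid nature of our dynamics already highlighted in the introduction. For a finitely supported $z^*$, $D(\cdot,z^*)$ is a bounded local function on $\mathcal{X}$, so every term in $LD(\cdot,z^*)(z)$ is individually well defined; what needs to be controlled is the convergence of the sum $\sum_{j\in\Z^d} a(i,j)\,[\,\cdots\,]$ involving contributions from remote colonies. This is precisely where Assumption~\ref{assumpt1} combined with one of the growth conditions (a), (b) enters: bounding $x_j\leq N_j$ and pairing sub-exponential (resp.\ polynomial) growth of $N_j$ against an exponentially (resp.\ polynomially) summable kernel yields $\sum_j a(i,j)N_j<\infty$ and its higher-order analogues.

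Step by step I would proceed as follows. First, compute $LD(\cdot,z^*)(z)$ by splitting $L$ into the three elementary mechanisms --- same-colony resampling, cross-colony resampling, and active/dormant exchange --- and perform a term-by-term rearrangement identifying each contribution with the action of a corresponding dual mechanism: migration of a dual particle, coalescence of two dual particles at a common colony with SEP-type repulsion, and a swap between the active and the dormant state. This pins down $L^*$ and gives the generator identity. Second, promote the identity to the factorial moment duality $\E_z[D(Z(t),z^*)] = \E_{z^*}[D(z,Z^*(t))]$ via Dynkin's formula, using that the dual consists of a finite number of particles whose count is non-increasing in time and therefore cannot explode. Third, construct the forward process as a weak limit of finite-volume Markov processes obtained by truncating the migration kernel to a box $\Lambda_n=[-n,n]^d\cap\Z^d$; each truncated process lives on a compact state space and is trivially well defined, and tightness of the laws on Skorohod space follows from uniform bounds on the jump rate at any fixed colony, which again reduce to finiteness of $\sum_j a(j,i)N_j$ under (a) or (b). Finally, derive uniqueness of the martingale problem from duality: two solutions starting from the same law yield the same expectation of $D(\cdot,z^*)$ for every $z^*\in\mathcal{X}^*$, and the family $\{D(\cdot,z^*)\}$ forms a measure-determining class on $\mathcal{X}$.

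The main obstacle is the interplay between the non-compactness of $\mathcal{X}$ and the long-range nature of the resampling kernel. The combinatorial verification of the generator identity is lengthy but essentially a routine extension of the corresponding computations for the Voter model and $2j$-SEP; the real work lies in showing that the rate at which mass can arrive at a fixed colony $i$ from the rest of $\Z^d$ is finite and controllable uniformly along the finite-volume approximation. Conditions (a) and (b) are precisely the two regimes in which this estimate goes through: (a) pairs a rapidly growing $N_j$ with a thin-tailed kernel, while (b) pairs a moderately growing $N_j$ with a heavier-tailed kernel that still has enough polynomial moments. Either option produces convergent bounds on the moments of the incoming flux, and this is what ultimately unlocks both the generator identity and the tightness of the finite-volume approximations.
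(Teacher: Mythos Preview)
Your overall architecture --- generator-level duality, then process-level duality, then uniqueness via the measure-determining family $\{D(\cdot,z^*)\}$ --- matches the paper's, and your choice of duality function is exactly right. However, you have misidentified \emph{where} conditions (a) and (b) actually enter, and this is a genuine gap.

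First, two minor points. The state space $\mathcal{X}=\prod_i[N_i]\times[M_i]$ is a product of finite sets and is therefore compact in the product topology; there is no non-compactness issue. Second, the generator $L$ applied to a local function is automatically well defined: for fixed $i$ the migration sum $\sum_j a(i,j)X_i(N_j-X_j)/N_j$ is bounded by $N_i\sum_j a(i,j)=cN_i$ regardless of how fast $N_j$ grows, so you do \emph{not} need $\sum_j a(i,j)N_j<\infty$, and conditions (a)/(b) are not used here. Likewise, existence of solutions to the martingale problem (via Liggett's framework, or via your finite-volume scheme and tightness on a compact space) holds for arbitrary $(N_i)$ and does not use (a)/(b) either.

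The real obstruction lies in the step you gloss over as ``promote via Dynkin's formula''. To pass from the pointwise identity $LD(\cdot,z^*)=L^*D(z,\cdot)$ to the process-level duality $\E_z[D(Z(t),z^*)]=\E^{z^*}[D(z,Z^*(t))]$, you must control $\sup_{s,t\le T}|LD(\cdot,Z^*(s))(Z(t))|$ by an integrable random variable (this is Ethier--Kurtz, Theorem~4.11). Since $D(\cdot,\xi)$ depends on the coordinates in the support of $\xi$, and the forward generator at colony $i$ carries a factor $N_i$, one gets a bound of order $\sum_{\|i\|\le\Gamma(T)}N_i$, where $\Gamma(T)$ is the maximal displacement of the dual particles up to time $T$. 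Non-explosion of the dual is not enough: the dual particles can migrate to colonies with arbitrarily large $N_i$. What is needed is a tail bound on $\Gamma(T)$ that decays fast enough to beat the growth of $N_i$, i.e.\ $\sum_i N_i\,\P^\xi(\Gamma(T)\ge\|i\|)<\infty$. Conditions (a) and (b) are precisely calibrated to deliver this: under (a) one shows $\P(\Gamma(T)\ge k)\lesssim e^{-\delta k}$ via exponential moments of the migration steps, under (b) one shows $\P(\Gamma(T)\ge k)\lesssim k^{-(d+\gamma+\delta)}$ via polynomial moments, and in either case the resulting sum against $N_i$ converges. Without this argument your lifting step fails, and with it so does uniqueness.
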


\noindent
Theorem~\ref{T.WPD} provides us with two sufficient conditions under which the system is well-defined and has a tractable dual. It shows a \emph{trade-off}: the more we restrict the tails of the migration kernel, the less we need to restrict the sizes of the active population. The sizes of the dormant population play no role because all the events (resampling, migration and exchange) in our model are initiated by active individuals and dormant individuals do not feel the spatial extent of the geographic space. Theorem~\ref{T.Mul_Dual}, Corollary~\ref{C.Mul_Dual} and Theorem~\ref{T.Mul_Well} in Section~\ref{ss.multi} contain the fine details.


\subsection{Equilibrium: coexistence versus clustering}
\label{ss.equi}

\begin{theorem}{\bf [Equilibrium]}
	\label{T.EQU}
	If the initial distribution of the system is such that each active and each dormant individual adopts a type with the same probability independently of other individuals, then the system admits a one-parameter family of equilibria.
	\begin{itemize}
		\item 
		The family of equilibria is parameterised by the probability to have one of the two types. 
		\item  
		The system converges to a mono-type equilibrium if and only if two random walks in the dual starting from arbitrary states eventually coalesce with probability one.
	\end{itemize}
\end{theorem}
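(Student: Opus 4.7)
The plan is to exploit the factorial moment duality of Theorem~\ref{T.WPD}, together with the monotonicity of the dual particle count, to read off the long-time behavior of $(Z(t))_{t\ge 0}$ from the coalescence statistics of $(Z^*(t))_{t\ge 0}$. Let $\nu_\rho$ denote the initial distribution under which every individual is independently of type $\heartsuit$ with probability $\rho \in [0,1]$, so that $X_i(0) \sim \mathrm{Bin}(N_i,\rho)$ and $Y_i(0) \sim \mathrm{Bin}(M_i,\rho)$ independently across $i \in \Z^d$. Writing $D(z,z^*)$ for the normalized falling-factorial duality function provided by Theorem~\ref{T.WPD} and $|z^*|$ for the total number of dual particles, a short computation with binomial moments yields
$$\E_{\nu_\rho}\bigl[D(Z(0),z^*)\bigr] = \rho^{|z^*|},\qquad z^* \in \mathcal{X}^*,$$
and the duality identity then gives
$$\E_{\nu_\rho}\bigl[D(Z(t),z^*)\bigr] = \E_{z^*}\bigl[\rho^{|Z^*(t)|}\bigr],\qquad t \ge 0.$$

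Since coalescence is the only event that changes $|Z^*(t)|$, this count is integer-valued, non-increasing, and bounded below by $1$, hence converges almost surely to a limit $|Z^*(\infty)|$. Dominated convergence (with $0 \le D \le 1$) yields
$$\lim_{t\to\infty} \E_{\nu_\rho}\bigl[D(Z(t),z^*)\bigr] = \E_{z^*}\bigl[\rho^{|Z^*(\infty)|}\bigr].$$
Because $\mathcal{X}$ is a product of finite sets, finite-dimensional falling-factorial moments determine finite-dimensional marginals, so $Z(t)$ converges weakly to a probability measure $\mu_\rho$ on $\mathcal{X}$, automatically invariant by the Markov property. Varying $\rho \in [0,1]$ produces the claimed one-parameter family of equilibria.

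For the dichotomy, a direct evaluation of $D$ at the two atoms composing $\mathcal{P}_{1-\rho}$ gives $\E_{\mathcal{P}_{1-\rho}}[D(Z,z^*)] = \rho$ for every nonempty $z^*$. Hence $\mu_\rho \in \mathcal{P}$ if and only if $\E_{z^*}[\rho^{|Z^*(\infty)|}] = \rho$ for every nonempty $z^*$ and every $\rho \in (0,1)$; for $\rho \in (0,1)$ this forces $|Z^*(\infty)| = 1$ almost surely for every initial dual configuration. The case $|z^*| = 2$ is exactly the two-particle coalescence criterion in the theorem, while the extension to $|z^*| \ge 3$ proceeds by induction on the number of particles, invoking the strong Markov property at the first coalescence time of a chosen pair.

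The main obstacle is this induction step. The dual is not a system of independent coalescing walks: the $2j$-SEP component creates a repulsive interaction and the active/dormant switching introduces additional timing effects, so one cannot simply treat a designated pair inside a larger configuration as if the other particles were absent. Making the reduction rigorous will require a coupling or monotonicity argument showing that, in the presence of spectators, some pair is still brought together almost surely. A secondary point is verifying that convergence of all finite-dimensional factorial moments upgrades to weak convergence on the compact product space $\mathcal{X}$, which is routine given that each coordinate lives in a finite set.
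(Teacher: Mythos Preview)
Your argument for convergence to equilibrium is exactly the paper's: product-binomial moments give $\E_{\nu_\rho}[D(Z(t),z^*)] = \E_{z^*}[\rho^{|Z^*(t)|}]$, monotonicity of the dual particle count yields a limit, and determinacy of finite-dimensional laws by factorial moments upgrades this to weak convergence on the compact product space. Likewise, your ``only if'' direction of the dichotomy---taking $|z^*|=2$ and reading off $\P(\tau=\infty)=0$ from $\rho = \E[\rho^{|Z^*(\infty)|}]$---matches the paper's computation in \eqref{last}.

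The gap you flag in the ``if'' direction is genuine, and the paper does \emph{not} close it by your proposed induction. Instead it sidesteps the $n$-particle problem entirely by observing that the mono-type property is a \emph{second-moment} statement: the limit law lies in $\mathcal{P}$ as soon as the pairwise heterozygosities $\Delta_{(i,\alpha),(j,\beta)}(t)$ of \eqref{p_eqn13}--\eqref{p_eqn14} tend to zero for every pair of sites and states. Via duality, each such heterozygosity is expressed through one- and two-particle duals only (Lemmas~\ref{lem_first}--\ref{lem_second}), and a correlation inequality comparing the one-particle and two-particle systems (Lemma~\ref{lem_corr}) gives
\[
\E\!\left[\tfrac{X_i(t)}{N_i} - M_{(i,A),(j,\beta)}(t)\right] \;\le\; \P^{((i,A),(j,\beta))}(\tau \ge t),
\]
so almost-sure coalescence of \emph{two} dual particles already forces every $\E[\Delta_{(i,\alpha),(j,\beta)}(t)]\to 0$. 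The pairwise vanishing then pins the equilibrium to the two constant configurations without ever needing to show that three or more interacting dual particles coalesce. Your induction, by contrast, would genuinely require controlling a tagged pair in the presence of repelling spectators, and the paper develops no tool for that; the heterozygosity route is how the interaction is bypassed.
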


\noindent
Theorem~\ref{T.EQU} tells us that the system converges to an equilibrium when it is started from a specific class of initial distributions, namely, products of binomials. It also provides a \emph{criterion} in terms of the dual that determines whether the equilibrium is mono-type or multi-type. Theorem~\ref{T.Equil}, Corollary~\ref{C.Equil} and Theorem~\ref{T.Dichotomy} in Section~\ref{ss.multi} contain the fine details.


\section{Basic theorems: duality, well-posedness and clustering criterion}
\label{s.basics}

In Section~\ref{ss.single} we define and analyse the single-colony model. In Section~\ref{ss.multi} we do the same for the multi-colony model. Our focus is on well-posedness, duality and convergence to equilibrium.    


\subsection{Single-colony model}
\label{ss.single}


\subsubsection{Definition: resampling and exchange}

Consider two populations, called \emph{active} and \emph{dormant}, consisting of $N$ and $M$ haploid individuals, respectively. Individuals in the population carry one of two genetic types: $\heartsuit$ and $\spadesuit$. Dormant individuals reside inside the \emph{seed-bank}, active individuals reside outside. The dynamics of the single-colony Moran model with seed-bank is as follows:
\begin{itemize}
	\item[--]
	Each individual in the active population carries a \emph{resampling clock} that rings at rate $1$. When the clock rings, the individual randomly chooses an active individual and \emph{adopts} its type. 
	\item[--]
	Each individual in the active population also carries an \emph{exchange clock} that rings at rate $\lambda$. When the clock rings, the individual randomly chooses a dormant individual and exchanges state, i.e., becomes dormant and forces the chosen dormant individual to become active. During the exchange the two individuals \emph{retain} their type.   
\end{itemize}
Since the sizes of the two populations remain constant, we only need two variables to describe the dynamics of the population, namely, the number of a type-$\heartsuit$ individuals in both populations (see Table~\ref{tab:single}). 

\begin{table}[htbp]
	\renewcommand{\arraystretch}{1.2}
	\begin{tabular}{|c|c|c|c|}
		\hline
		\multicolumn{1}{|c|}{Initial state} & \multicolumn{1}{c|}{Event} & \multicolumn{1}{c|}{Final state} 
		& \multicolumn{1}{c|}{Transition rate} \\ \hline
		\multirow{4}{*}{$(x,y)$}      
		& \multirow{2}{*}{Resampling}     
		&  $(x-1,y)$                
		&  $\nicefrac{x(N-x)}{N}$ \\ \cline{3-4} 
		&                       
		&   $(x+1,y)$            
		&   $\nicefrac{x(N-x)}{N}$ \\ \cline{2-4} 
		& \multirow{2}{*}{Exchange}     
		&   $(x-1,y+1)$           
		&   $\nicefrac{\lambda x(M-y)}{M}$ \\ \cline{3-4} 
		&                       
		&   $(x+1,y-1)$             
		&   $\nicefrac{\lambda (N-x)y}{M}$ \\ \hline
	\end{tabular}
	\caption{Scheme of transitions in the single-colony model.}
	\label{tab:single}
\end{table}

Let $x$ and $y$ denote the number of individuals of type $\heartsuit$ in the active and the dormant population, respectively. After a resampling event, $(x,y)$ can change to $(x-1,y)$ or $(x+1,y)$, while after an exchange event $(x,y)$ can change to $(x-1,y+1)$ or $(x+1,y-1)$. Both changes in the resampling event occur at rate $x\frac{N-x}{N}$. In the exchange event, however, to see $(x,y)$ change to $(x-1,y+1)$, an exchange clock of a type-$\heartsuit$ individual in the active population has to ring (which happens at rate $\lambda x$), and that individual has to choose a type-$\spadesuit$ individual in the dormant population (which happens with probability $\frac{M-y}{M}$). Hence the total rate at which $(x,y)$ changes to $(x-1,y+1)$ is $\lambda x \frac{M-y}{M}$. By the same argument, the total rate at which $(x,y)$ changes to $(x+1,y-1)$ is $\lambda (N-x)\frac{y}{M}$.

For convenience we multiply the rate of resampling by a factor $\frac{1}{2}$, in order to make it compatible with the Fisher-Wright model. Thus, the generator $G$ of the process is given by
\begin{equation} 
	\label{eqn1}
	G = G_{\text{Mor}} + G_{\text{Exc}},
\end{equation}
where 
\begin{equation}
	\label{eqn2}
	(G_{\text{Mor}}f)(x,y) = \frac{x(N-x)}{2N}\left[f(x-1,y)+f(x+1,y)-2f(x,y)\right]
\end{equation}
describes the Moran resampling of active individuals at rate $\frac{1}{2}$ and
\begin{equation}\
	\label{eqn3}
	(G_{\text{Exc}}f)(x,y) = \frac{\lambda}{M}x(M-y)\left[f(x-1,y+1)-f(x,y)\right] 
	+ \frac{\lambda}{M}y(N-x)\left[f(x+1,y-1)-f(x,y)\right]
\end{equation}
describes the exchange between active and dormant individuals at rate $\lambda$. From here onwards, we denote the Markov process associated with the generator $G$ by 
\begin{equation}
	Z = (Z(t))_{t \geq 0}, \qquad Z(t) = (X(t),Y(t)),
\end{equation} 
where $X(t)$ and $Y(t)$ are the number of type-$\heartsuit$ active and dormant individuals at time $t$, respectively. The process $Z$ has state space $[N]\times [M]$, where $[N]=\{0,1,\ldots,N\}$ and $[M]=\{0,1,\ldots,M\}$. Note that $Z$ is well-defined because it is a continuous-time Markov chain with finitely many states.


\subsubsection{Duality and equilibrium}

The classical Moran model is known to be dual to the block-counting process of the Kingman coalescent. In this section we show that the single-colony Moran model with seed-bank also has a coalescent dual.

\begin{definition}{\bf [Block-counting process]}
	\label{def1}
	{\rm The \emph{block-counting process} of the interacting seed-bank coalescent (defined in Definition~\ref{def:isc} below) is the continuous-time Markov chain 
		\begin{equation}
			Z^* = (Z^*(t))_{t \geq 0}, \qquad Z^*(t) = (n_t,m_t),
		\end{equation} 
		taking values in the state space $[N] \times [M]$ with transition rates
		\begin{equation}
			\label{eqn4}
			(n,m)\mapsto 
			\begin{array}{ll}
				(n-1,m+1) &\text{ at rate } \lambda n \left(1-\tfrac{m}{M}\right),\\[0.2cm]
				(n+1,m-1) & \text{ at rate } \lambda K m \left(1-\tfrac{n}{N}\right),\\[0.2cm]
				(n-1,m)    &\text{ at rate } \tfrac{1}{N}\binom{n}{2}\mathbf{1}_{\{n\geq 2\}},
			\end{array}
		\end{equation}
		where $K=\frac{N}{M}$ is the \emph{ratio} of the sizes of the active and the dormant population.} \hfill $\Box$
\end{definition}

\noindent
The first two transitions in \eqref{eqn4} correspond to exchange, the third transition to resampling. Later in this section we describe the associated \emph{interacting seed-bank coalescent} process, which gives the genealogy of $Z$. 

The following result gives the duality between $Z$ and $Z^*$.

\begin{theorem}{\bf [Duality]}
	\label{T.Sing_Dual}
	The process $Z$ is dual to the process $Z^*$ via the duality relation
	\begin{equation}
		\label{eqn5}
		\E_{(X,Y)}\left[\frac{\binom{X(t)}{n}}{\binom{N}{n}}\frac{\binom{Y(t)}{m}}{\binom{M}{m}}
		\mathbf{1}_{\{n\leq X(t),m\leq Y(t)\}}\right]
		=\E^{(n,m)}\left[\frac{\binom{X}{n(t)}}{\binom{N}{n(t)}}\frac{\binom{Y}{m(t)}}{\binom{M}{m(t)}}
		\mathbf{1}_{\{n(t)\leq X,m(t)\leq Y\}}\right],\quad t\geq 0,
	\end{equation}
	where $\E$ stands for generic expectation. On the left the expectation is taken over $Z$ with initial state $Z(0)=(X,Y)\in[N]\times[M]$, on the right the expectation is taken over $Z^*$ with initial state $Z^*(0)=(n,m)\in[N]\times[M]$.
\end{theorem}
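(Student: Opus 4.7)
The plan is to deduce \eqref{eqn5} from the generator-duality criterion \`a la Ethier--Kurtz. Define the duality kernel
\begin{equation*}
  H\bigl((x,y),(n,m)\bigr) := \frac{\binom{x}{n}\binom{y}{m}}{\binom{N}{n}\binom{M}{m}}, \qquad (x,y),(n,m) \in [N]\times[M],
\end{equation*}
with the convention $\binom{a}{b}=0$ for $b>a$, so that the indicators in \eqref{eqn5} are automatically encoded in $H$. Because $Z$ and $Z^{*}$ are continuous-time Markov chains on the finite set $[N]\times[M]$, once the pointwise generator identity
\begin{equation*}
  \bigl(G\,H(\cdot,(n,m))\bigr)(x,y) \;=\; \bigl(G^{*}\,H((x,y),\cdot)\bigr)(n,m), \qquad (x,y),(n,m)\in[N]\times[M],
\end{equation*}
is verified (with $G$ as in \eqref{eqn1}--\eqref{eqn3} and $G^{*}$ read off from \eqref{eqn4}), the semigroup version \eqref{eqn5} follows from the matrix identity $\e^{tG}H=H(\e^{tG^{*}})^{\mathsf T}$, since both sides satisfy the same linear ODE in $t$ with the same initial datum $H$ at $t=0$.

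I would then split $G=G_{\text{Mor}}+G_{\text{Exc}}$ and $G^{*}=G^{*}_{\text{coal}}+G^{*}_{\text{exc}}$ and verify the two generator identities separately. The Moran/coalescence identity only involves the coordinates $x$ and $n$, and the factor $\binom{y}{m}/\binom{M}{m}$ factors through. It reduces, via the Pascal telescope $\binom{x+1}{n}+\binom{x-1}{n}-2\binom{x}{n}=\binom{x-1}{n-2}$ together with the elementary identities $x\binom{x-1}{n-2}=(n-1)\binom{x}{n-1}$ and $n\binom{x}{n}=(x-n+1)\binom{x}{n-1}$, to the classical Moran--Kingman-coalescent duality with coalescence rate $\binom{n}{2}/N$; this step is short and amounts to rearranging the identity $(N-x)\binom{x}{n-1}=(N-n+1)\binom{x}{n-1}-n\binom{x}{n}$.

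For the exchange identity I would expand $H(x\pm 1,y\mp 1;n,m)-H(x,y;n,m)$ using Pascal in each of the two binomial factors, and regroup the resulting terms into coefficients of $\binom{x}{n-1}\binom{y}{m+1}$, $\binom{x}{n+1}\binom{y}{m-1}$ and $\binom{x}{n}\binom{y}{m}$. After dividing by $\binom{N}{n}\binom{M}{m}$ and using the ratios $\binom{N}{n}/\binom{N}{n\mp 1}$ and $\binom{M}{m}/\binom{M}{m\pm 1}$ together with absorption identities such as $x\binom{x-1}{n-1}=n\binom{x}{n}$, the off-diagonal coefficients should collapse precisely to $\lambda n(1-m/M)$ and $\lambda K m(1-n/N)$ with $K=N/M$, while the diagonal contributions cancel between the up- and down-exchange directions, yielding exactly the backward exchange action of \eqref{eqn4}.

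The main obstacle is the algebraic bookkeeping in the exchange step: both exchange directions contribute simultaneously on each side, and the $y$-dependent forward factor $(M-y)$ (resp.\ the $x$-dependent factor $(N-x)$) has to be converted into an $m$-dependent factor (resp.\ $n$-dependent factor) through the product structure of $H$. It is precisely this conversion that forces the ratio $K=N/M$ to appear in the second dual rate, and it is the reason the factorial-moment kernel $H$ (rather than a plain polynomial) is the correct choice of duality function. The Moran/coalescence piece, by contrast, is a well-known identity requiring only a short direct verification.
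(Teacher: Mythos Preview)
Your proposal is correct and follows the direct-computation route that the paper explicitly acknowledges as valid but chooses to avoid. The paper writes: ``It is somewhat tedious to verify [the generator identity] by direct computation. Rather, we will write down a proof with the help of the elementary operators.'' Instead of expanding the binomial increments by Pascal's rule as you do, the paper introduces six building-block operators $J^{\alpha,\pm},J^{\alpha,0}$ and $A^{\alpha,\pm},A^{\alpha,0}$ (two three-dimensional representations of $\mathfrak{su}(2)$), proves once and for all the three intertwining relations $J^{\alpha,s}\overset{d_\alpha}{\longrightarrow}A^{\alpha,s}$ (Lemma~\ref{lem1}), and then expresses $G_{\text{Mor}}$, $G_{\text{Exc}}$, $G_{\text{King}}$ as polynomials in these operators; duality follows by the composition rules for operator duality. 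What the paper's approach buys is modularity: the same three intertwining identities are reused verbatim in the multi-colony proof (Lemma~\ref{lem2}, Proposition~\ref{prop3}), where the generator is an infinite sum and your direct Pascal bookkeeping would have to be redone site by site. What your approach buys is self-containment and transparency for the single-colony case: no Lie-algebra machinery is needed, and the Moran/Kingman piece is a textbook two-line computation. Your exchange step is only sketched (``the off-diagonal coefficients should collapse\dots''), but the mechanism you describe---converting the forward factors $(M-y)$ and $(N-x)$ into backward factors via the product structure of $H$ and the absorption identities---is exactly right, and is in effect what the paper's $J^{\alpha,s}\overset{d_\alpha}{\longrightarrow}A^{\alpha,s}$ relations encode.
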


\noindent
Note that the duality relation fixes the factorial moments and thereby the mixed moments of the random vector $(X(t),Y(t))$. This enables us to determine the equilibrium distribution of $Z$. 

Although the above duality is new in the literature on seed-banks, the notion of factorial duality is not uncommon in mathematical models involving finite and fixed population sizes \cite{EG,G78}. Similar types of dualities are often found for other models too (e.g. self-duality of independent random walks, exclusion and inclusion processes, etc. \cite{GKRV}). Remarkably, in the special case where $N=M=2j$ for some $j\in\N/2,$ Giardin{\`a} et al. (2009) \cite[Section 3.2]{GKRV} identified the same duality relation as in \eqref{eqn5} as a self-duality for the generalized $2j$-SEP on two-sites. This is not surprising given the fact that the exchange rates between active and dormant individuals defined in Table~\ref{tab:single} are precisely the rates (up to rescaling) for the $2j$-SEP on two sites. We refer the reader to Section~\ref{ss.repr} to gain further insights into this.

\begin{proposition}{\bf [Convergence of moments]}
	\label{prop1}
	For any $(X,Y),(n,m)\in[N]\times[M]$ with $(n,m)\neq (0,0)$,
	\begin{equation}
		\lim_{t\to\infty}\E_{(X,Y)}\left[X(t)^nY(t)^m\right]=N^nM^m\frac{X+Y}{N+M}.
	\end{equation}
\end{proposition}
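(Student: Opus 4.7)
The plan is to apply the duality of Theorem~\ref{T.Sing_Dual} to reduce the problem to an ergodic analysis of the block-counting dual $Z^*$, then convert binomial moments into ordinary moments via Stirling numbers of the second kind.

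First, using the identity $x^n = \sum_{k=0}^n S(n,k)\, x^{\underline{k}}$ with falling factorial $x^{\underline{k}} = x(x-1)\cdots(x-k+1)$, I would decompose
\begin{equation*}
\E_{(X,Y)}[X(t)^n Y(t)^m] = \sum_{k=0}^{n}\sum_{l=0}^{m} S(n,k)\,S(m,l)\,\E_{(X,Y)}[X(t)^{\underline{k}} Y(t)^{\underline{l}}].
\end{equation*}
Since $x^{\underline{k}} = k!\binom{x}{k}$, each summand is linked via Theorem~\ref{T.Sing_Dual} to a dual expectation:
\begin{equation*}
\E_{(X,Y)}[X(t)^{\underline{k}} Y(t)^{\underline{l}}] = N^{\underline{k}} M^{\underline{l}}\,\E^{(k,l)}\!\left[\frac{\binom{X}{n(t)}\binom{Y}{m(t)}}{\binom{N}{n(t)}\binom{M}{m(t)}}\right].
\end{equation*}

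The crux of the proof is the long-time behavior of $Z^*$. From the rates in \eqref{eqn4}, exchange transitions conserve the total particle count $n(t)+m(t)$, while resampling strictly decreases it by one and only fires when $n(t) \geq 2$. Consequently, starting from any $(k,l)\neq(0,0)$, $Z^*$ is almost surely absorbed in finite time into the two-state subset $\{(1,0),(0,1)\}$, where it evolves as an irreducible continuous-time Markov chain with jump rates $\lambda$ from $(1,0)$ to $(0,1)$ and $\lambda K$ in the reverse direction. Its unique stationary distribution follows immediately from the two-state balance equation: $\pi(1,0) = K/(1+K) = N/(N+M)$ and $\pi(0,1) = 1/(1+K) = M/(N+M)$. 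Since the dual integrand is bounded by $1$, ergodicity and dominated convergence yield
\begin{equation*}
\lim_{t\to\infty}\E^{(k,l)}\!\left[\frac{\binom{X}{n(t)}\binom{Y}{m(t)}}{\binom{N}{n(t)}\binom{M}{m(t)}}\right] = \frac{N}{N+M}\cdot\frac{X}{N} + \frac{M}{N+M}\cdot\frac{Y}{M} = \frac{X+Y}{N+M}
\end{equation*}
for every $(k,l)\neq(0,0)$; the $(k,l)=(0,0)$ limit is trivially $1$.

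Finally, I substitute these limits back into the Stirling sum. For $(n,m)\neq(0,0)$ the $(k,l)=(0,0)$ contribution drops out because $S(n,0)\,S(m,0)=0$, and the identity $N^n M^m = \sum_{k,l} S(n,k)\,S(m,l)\,N^{\underline{k}} M^{\underline{l}}$ collapses the surviving sum to $N^n M^m \cdot (X+Y)/(N+M)$, which is the stated limit. Interchanging the limit and the finite Stirling sum is automatic because $(X(t),Y(t))$ lives in the finite state space $[N]\times[M]$ so all moments are uniformly bounded. The only conceptually nontrivial ingredient is the identification of the absorbing two-state subset of $Z^*$ together with its stationary distribution; once this is in hand, the remainder of the argument is routine algebra.
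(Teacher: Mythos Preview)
Your proposal is correct and follows essentially the same approach as the paper's proof: the Stirling-number decomposition into falling factorials, application of the duality relation in Theorem~\ref{T.Sing_Dual}, identification of the absorbing set $\{(1,0),(0,1)\}$ with its invariant distribution $\tfrac{N}{N+M}\delta_{(1,0)}+\tfrac{M}{N+M}\delta_{(0,1)}$, and the final recombination using $S(n,0)S(m,0)=0$ for $(n,m)\neq(0,0)$ all mirror the paper's argument almost step for step. If anything, your justification of absorption (exchange conserves the total, resampling strictly decreases it) is slightly more explicit than the paper's, which simply asserts $T<\infty$ almost surely.
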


\noindent
Since the vector $(X(t),Y(t))$ takes values in $[N]\times[M]$, which has $(N+1)(M+1)$ points, the above proposition determines the limiting distribution of $(X(t),Y(t))$.

\begin{corollary}{\bf [Equilibrium]}
	\label{T.Sin_Eq}
	Suppose that $Z$ starts from initial state $(X,Y)\in[N]\times[M]$. Then $(X(t),Y(t))$ converges in law as $t\to\infty$ to a random vector $(X_\infty,Y_\infty)$ whose distribution is given by
	\begin{equation}
		\label{eqn6}
		\mathcal{L}_{(X,Y)}(X_\infty,Y_\infty)=\frac{X+Y}{N+M}\,\delta_{(N,M)}+\left(1-\frac{X+Y}{N+M}\right)\delta_{(0,0)}.
	\end{equation}
\end{corollary}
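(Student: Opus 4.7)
The plan is to deduce the corollary essentially as a corollary of Proposition~\ref{prop1} by exploiting the fact that the state space $[N]\times[M]$ is finite, so that convergence of all factorial/power moments is equivalent to weak convergence. No new probabilistic ingredient is needed; the argument is a short linear-algebra / moment-matching computation.

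First I would check that the proposed limit law $\mu := \frac{X+Y}{N+M}\,\delta_{(N,M)}+(1-\frac{X+Y}{N+M})\,\delta_{(0,0)}$ has the correct moments. A direct calculation gives
\begin{equation*}
\int x^n y^m \,\d\mu(x,y) = \frac{X+Y}{N+M}\,N^nM^m, \qquad (n,m)\in[N]\times[M]\setminus\{(0,0)\},
\end{equation*}
which matches exactly the limit in Proposition~\ref{prop1}, while $\int 1\,\d\mu=1$ agrees trivially with $\E[1]=1$. So $\mu$ is the only candidate, provided moments determine the limit.

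Next I would argue that the monomials $\{(x,y)\mapsto x^ny^m:\,0\leq n\leq N,\,0\leq m\leq M\}$ form a basis of the $(N+1)(M+1)$-dimensional space of real-valued functions on $[N]\times[M]$. This follows from the standard Vandermonde argument on the product grid $\{0,\ldots,N\}\times\{0,\ldots,M\}$. Hence every $f\colon[N]\times[M]\to\R$ admits a unique expansion $f(x,y)=\sum_{n,m} c_{n,m}\,x^ny^m$, and therefore
\begin{equation*}
\E_{(X,Y)}[f(X(t),Y(t))] = c_{0,0} + \sum_{(n,m)\neq(0,0)} c_{n,m}\,\E_{(X,Y)}[X(t)^nY(t)^m].
\end{equation*}
Passing to the limit using Proposition~\ref{prop1} (there are only finitely many terms, so no interchange issue arises), the right-hand side tends to
\begin{equation*}
c_{0,0} + \frac{X+Y}{N+M}\sum_{(n,m)\neq(0,0)} c_{n,m}\,N^nM^m = f(0,0) + \frac{X+Y}{N+M}\bigl(f(N,M)-f(0,0)\bigr),
\end{equation*}
where I used $\sum_{n,m}c_{n,m}\,0^n0^m=c_{0,0}=f(0,0)$ and $\sum_{n,m}c_{n,m}\,N^nM^m=f(N,M)$. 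This is precisely $\int f\,\d\mu$, so we obtain convergence of $\E[f(X(t),Y(t))]$ for every $f$ on the finite state space, i.e.\ weak (and in fact distributional) convergence of $(X(t),Y(t))$ to $(X_\infty,Y_\infty)\sim\mu$, which is what \eqref{eqn6} asserts.

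Since Proposition~\ref{prop1} is being assumed, there is no genuine obstacle; the only thing one must be careful about is to include the $(n,m)=(0,0)$ contribution separately (which gives the mass at $(0,0)$ in the mixture) and to keep track of the constant term in the monomial expansion so as to identify the weight $1-\frac{X+Y}{N+M}$ of the atom at $(0,0)$.
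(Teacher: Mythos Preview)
Your proof is correct and follows essentially the same approach as the paper: both deduce the corollary from Proposition~\ref{prop1} by using that on the finite state space $[N]\times[M]$ the mixed moments $\E[X(t)^nY(t)^m]$, $(n,m)\in[N]\times[M]$, determine the law. The paper phrases this via an invertible matrix $A$ mapping the probability vector $\vec{p}$ to the moment vector $\vec{c}$, whereas you phrase it via the monomial basis and an explicit expansion of an arbitrary $f$; these are the same linear-algebra fact, and your identification of the limit $f(0,0)+\tfrac{X+Y}{N+M}(f(N,M)-f(0,0))$ is just a slightly more explicit version of what the paper leaves implicit.
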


\noindent
Note that the equilibrium behaviour of $Z$ is the same as for the classical Moran model without seed-bank. The fixation probability of type $\heartsuit$ is $\tfrac{X+Y}{N+M}$, which is nothing but the initial frequency of type-$\heartsuit$ individuals in the \emph{entire population}. Even though the presence of the seed-bank delays the time of fixation, because its size is finite size it has no significant effect on the overall qualitative behaviour of the process. We will see in Section~\ref{ss.multi} that the situation is different in the multi-colony model.


\subsubsection{Interacting seed-bank coalescent}
\label{sub1}

In our model, the \emph{genealogy} of a sample taken from the finite population of $N+M$ individuals is governed by a partition-valued coalescent process similarly as for the genealogy of the classical Moran model. However, due the presence of the seed-bank, blocks of a partition are marked as $A$ (active) and $D$ (dormant). Unlike in the genealogy of the classical Moran model, the blocks \emph{interact} with each other. This interaction is present because of the restriction to \emph{finite size} of the active and the dormant population. For this reason, we name the block process an \emph{interacting seed-bank coalescent}. For convenience, we will use the word lineage to refer to a block in a partition.

Let $\mathcal{P}_k$ be the set of partitions of $\{1,2,\ldots,k\}$. For $\xi\in\mathcal{P}_k$, denote the number of lineages in $\xi$ by $|\xi|$. Furthermore, for $j,k,l\in\N$, define 
\begin{equation}
	\mathcal{M}_{j,k,l} = \Big\{\vec{u}\in\{A,D\}^{j}\colon\,\text{ the numbers of $A$ and $D$ in } \vec{u}
	\text{ are at most $k$ and $l$, respectively}\Big\}.
\end{equation}
The state space of the process is $\mathcal{P}_{N,M}=\{(\xi,\vec{u})\,\colon\,\xi\in\mathcal{P}_{N+M},\vec{u}\in\mathcal{M}_{|\xi|,N,M}\}$. Note that $\mathcal{P}_{N,M}$ contains only those marked partitions of $\{1,2,\ldots,N+M\}$ that have at most $N$ active lineages and $M$ dormant lineages. This is because we can only sample at most $N$ active and $M$ dormant individuals from the population. 

Before we give the formal definition, let us adopt some notation. For $\pi,\pi^\prime\in\mathcal{P}_{N,M}$, we say that $\pi\succ\pi^\prime$ if $\pi^\prime$ can be obtained from $\pi$ by merging two active lineages. Similarly, we say that $\pi\Join\pi^\prime$ if $\pi^\prime$ can be obtained from $\pi$ by altering the state of a single lineage ($A\to D$ or $D\to A$). We write $|\pi|_A$ and $|\pi|_D$ to denote the number of active and dormant lineages present in $\pi$, respectively.

\begin{definition}{\bf [Interacting seed-bank coalescent]}
	\label{def:isc}
	{\rm The \emph{interacting seed-bank coalescent} is the continuous-time Markov chain with state space $\mathcal{P}_{M,N}$ characterised by the following transition rates:
		\begin{equation}
			\label{eqn7}
			\begin{aligned}
				&\pi\mapsto\pi^\prime \text{ at rate }\\
				&\begin{array}{ll}
					\frac{1}{N} &\text{ if } \pi \succ \pi^\prime,\\[0.2cm]
					\lambda\big(1-\frac{|\pi|_D}{M}\big) &\text{ if } \pi \Join \pi^\prime 
					\text{ by change of state of one lineage in $\pi$ from $A$ to $D$},\\[0.2cm]
					\lambda K\big(1-\frac{|\pi|_A}{N}\big) &\text{ if } \pi\Join\pi^\prime 
					\text{ by change of state of one lineage in $\pi$ from $D$ to $A$}.
				\end{array}
			\end{aligned}
		\end{equation}
	} \hfill$\Box$
\end{definition}

\noindent
The factor $1-\frac{|\pi|_D}{M}$ in the transition rate of a single active lineage when $\pi$ becomes dormant reflects the fact that, as the seed-bank gets full, it becomes more difficult for an active lineage to enter the seed-bank. Similarly, as the number of active lineages decreases due to the coalescence, it becomes easier for a dormant lineage to leave the seed-bank and become active. This also tells us that there is a \emph{repulsive interaction} between the lineages of the same state ($A$ or $D$). Due to this interaction, it is tricky to study the coalescent. As $N,M$ get large, the interaction becomes weak. As $N,M\to\infty$, after proper space-time scaling, the coalescent converges weakly to a limit coalescent where the interaction is no longer present. In fact, it can be shown that when both the time and the parameters are scaled properly, the coalescent converges weakly as $N,M\to\infty$ to the \emph{seed-bank coalescent} described in \cite{BCKW16}.

We can also describe the coalescent in terms of an interacting particle system with the help of a graphical representation (see Figure~\ref{fig:1}). The interacting particle system consists of two reservoirs, called \emph{active} reservoir and \emph{dormant} reservoir, having $N$ and $M$ labeled sites, respectively, each of which can be occupied by at most one particle. The particles in the active and dormant reservoir are called \emph{active} and \emph{dormant} particles, respectively. The active particles can coalesce with each other, in the sense that if an active particle occupies a labeled site where an active particle is present already, then the two particles are glued together to form a single particle at that site. Active particles can become dormant by moving to an empty site in the dormant reservoir, while dormant particles can become active by moving to an empty site in the active reservoir. The transition rates are as follows:
\begin{itemize}
	\item 
	An active particle tries to coalesce with another active particle at rate $\frac{1}{2}$ by choosing uniformly at random a labeled site in the active reservoir. If the chosen site is empty, then it ignores the transition, otherwise it coalesces with the active particle present at the new site.
	\item 
	An active particle becomes dormant at rate $\lambda$ by moving to a random labeled site in the dormant reservoir when the chosen site is empty, otherwise it remains in the active reservoir.
\end{itemize}

\begin{figure}[htbp]
	\begin{center}
		\begin{tikzpicture}[yscale=0.5]
			\draw [fill=blue!20!] (-0.5,-0.5) rectangle (5.5,7.5);
			\draw [fill=red!20!] (10,-0.5) rectangle (12,7.5);
			\foreach \x in {0,1,2,3,4,5,10.5,11.5}
			{
				\draw [fill=black, thick] (\x,-0.5) circle [radius=0.05];	
			}
			\draw [thick,black,->] (-1,-0.5) to (-1,7.5) node[left,xshift=-0.1cm,yshift=-3cm] {t};
			\node[align=left, below] at (2.5,-0.5)
			{ Active reservoir $(N=6)$};
			\node[below] at (11,-0.5)
			{ Dormant reservoir $(M=2)$};
			\draw[thick,blue] (0,0) to (0,4.3);
			\draw[thick,blue] (0,4.3) to (1,4.3);
			\draw[thick,blue] (1,0) to (1,7);
			\draw[thick,blue] (1,3) to (2,3);
			\draw[thick,blue] (2,0) to (2,3);
			\draw[thick,blue] (3,0) to (3,7);
			\draw[thick,blue] (4,0) to (4,2.5);
			\draw[thick,black,dashed,->] (4,2.5) to (10.5,2.5) node[black,above,xshift=-2.9cm] 
			{ rate $\lambda(\tfrac{M-m}{M})=\tfrac{\lambda}{2}$};
			\draw[thick,red] (10.5,2.5) to (10.5,6.3);
			\draw[thick,black,dashed,<-] (5,6.3) to (10.5,6.3) node[above,xshift=-2.9cm] 
			{ rate $\lambda K (\tfrac{N-n}{N})=2\lambda$};
			\draw[thick,blue] (5,6.3) to (5,7);
			\draw[thick,blue] (5,0) to (5,1.3);
			\draw[thick,black,dashed,->] (5,1.3) to (11.5,1.3) node[black,below,xshift=-3.5cm] 
			{ rate $\lambda(\tfrac{M-m}{M})=\lambda$};
			\draw[thick,red] (11.5,1.3) to (11.5,7);
			\draw[thick,black,dashed,->] (3,5) to (10,5);
			\node[above] at (7.7,5.1) {Dormant reservoir is full.};
			\node[thick,red] at (7.5,5) {$X$};
		\end{tikzpicture}
	\end{center}
	\caption{\small Scheme of transitions for an interacting particle system with an active reservoir of size $N=6$ and a dormant reservoir of size $M=2$, so that $K=\tfrac{N}{M}=\tfrac{6}{2}=3$. The effective rate for each of $n$ active particles to become dormant is $\lambda\tfrac{M-m}{M}$ when the dormant reservoir has $m$ particles. Similarly, the effective rate for each of $m$ dormant particles to become active is $\lambda K \tfrac{N-n}{N}$ when the active reservoir has $n$ particles.}
	\label{fig:1}
\end{figure}
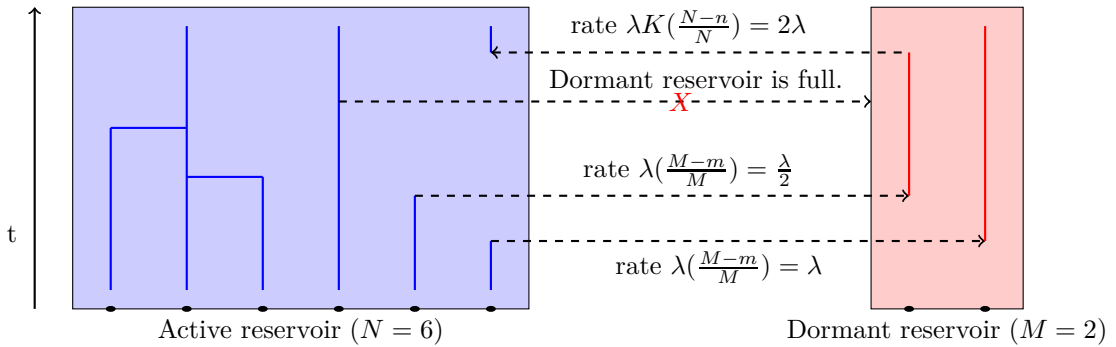

\begin{itemize}
	\item 
	A dormant particle becomes active at rate $\lambda K$ by moving to a random labeled site in the active reservoir when the chosen site is empty, otherwise it remains in the dormant reservoir.
\end{itemize}

\noindent
Clearly, the particles \emph{interact with each other} due to the finite capacity of the two reservoirs. If $N,M\to\infty$, then the probability to choose an empty site in a reservoir tends to 1, and so the system converges (after proper scaling) to an interacting particle system where the particles move independently between the two reservoirs.

Note that if we define $n_t=$ number of active particles at time $t$ and $m_t=$ number of dormant particles at time $t$, then $Z^*=(n_t,m_t)_{t\geq 0}$ is the block-counting process defined in Definition \ref{def1}. Also, if we remove the labels of the sites in the two reservoirs and represents the particle configuration by an element of $\mathcal{P}_{N,M}$, then we obtain the \emph{interacting seed-bank coalescent} described in Definition~\ref{def:isc}. Even though it is natural to describe the genealogical process via a partition-valued stochastic process, we will stick with the interacting particle system description of the dual, since this will be more convenient for the multi-colony model.


\subsection{Multi-colony model}
\label{ss.multi}

In this section we consider multiple colonies, each with their own seed-bank. Each colony has an \emph{active} population and a \emph{dormant} population. We take $\Z^d$ as the underlying \emph{geographic space} where the colonies are located (any countable Abelian group will do). With each colony $i\in \Z^d$ we associate a variable $(X_i,Y_i)$, with $X_i$ and $Y_i$ the number of type-$\heartsuit$ active and dormant individuals, respectively, at colony $i$. Let $(N_i,M_i)$ denote the size of the active and the dormant population at colony $i$. In each colony active individuals are subject to resampling and migration, and to exchange with dormant individuals that are in the same colony. Dormant individuals are not subject to resampling and migration.

Since it is crucial for our duality to keep the population sizes constant, we consider migration of types without the individuals actually moving themselves. To be precise, by a migration from colony $j$ to colony $i$ we mean that an active individual from colony $i$ randomly chooses an active individual from colony $j$ and adopts its type. In this way, the \emph{genetic information} moves from colony $j$ to colony $i$, while the individuals themselves stay put.


\subsubsection{Definition: resampling, exchange and migration}

We assume that each active individual at colony $i$ resamples from colony $j$ at rate $a(i,j)$, adopting the type of a uniformly chosen active individual at colony $j$. Here, the \emph{migration kernel} $a(\cdot,\cdot)$ is assumed to satisfy Assumption~\ref{ass:kernel}. After a migration to colony $i$, the only variable that is affected is $X_i$, the number of type-$\heartsuit$ active individuals at colony $i$. The final state can be either $X_i-1$ or $X_i+1$ depending on whether a type-$\heartsuit$ active individual from colony $i$ chooses a type-$\spadesuit$ active individual from another colony or a type-$\spadesuit$ active individual from colony $i$ chooses a type-$\heartsuit$ active individual from another colony. The rate at which $X_i$ changes to $X_i-1$ due to a migration from colony $j$ is 
$$
a(i,j)X_i\tfrac{N_j-X_j}{N_j},
$$
while the rate at which $X_i$ changes to $X_i+1$ due to a migration from colony $j$ is 
$$
a(i,j)(N_i-X_i)\tfrac{X_j}{N_j}.
$$
Note that for $i=j$ the migration rate is 
$$
a(i,i)X_i\tfrac{N_i-X_i}{N_i}=\tfrac{X_i(N_i-X_i)}{2N_i},
$$ 
which is the same as the effective birth and death rate in the single-colony Moran model. Thus, the resampling within each colony is already taken care of via the migration. 

It remains to define the associated exchange mechanism between the active and the dormant individuals in a colony. The exchange mechanism is the same as in the single-colony model, i.e., in each colony each active individual at rate $\lambda$ performs an exchange with a dormant individual chosen uniformly from the seed-bank of that colony. For simplicity, we take the exchange rate $\lambda$ to be the same in each colony.

The state space $\mathcal{X}$ of the process is 
\begin{equation}
	\mathcal{X} := \prod_{i\in\Z^d}\{0,1,\ldots,N_i\}\times\{0,1,\ldots,M_i\}=\prod_{i\in\Z^d} [N_i]\times[M_i].
\end{equation}
A configuration $\eta\in\mathcal{X}$ is denoted by $\eta = (X_i,Y_i)_{i\in\Z^d}$, with $X_i\in[N_i]$ and $Y_i\in[M_i]$.

\begin{table}[htbp]
	\renewcommand{\arraystretch}{1.3}
	\begin{tabular}{|c|c|c|c|}
		\hline
		\multicolumn{1}{|c|}{Initial state} & \multicolumn{1}{c|}{Event} & \multicolumn{1}{c|}{Final state} & \multicolumn{1}{c|}{Transition rate} \\ \hline
		\multirow{4}{*}{$(X_i,Y_i)_{i\in\Z^d}$}      
		& \multirow{2}{8em}{Migration from colony $j$ to $i$}     
		& $(\cdots,(X_i-1,Y_i),\cdots)$                
		& $\nicefrac{a(i,j)X_i(N_j-X_j)}{N_j}$ \\ \cline{3-4} 
		&                       
		& $(\cdots,(X_i+1,Y_i),\cdots)$            
		& $\nicefrac{a(i,j)(N_i-X_i)X_j}{N_j}$ \\ \cline{2-4} 
		& \multirow{2}{*}{Exchange at colony $i$}     
		& $(\cdots,(X_i-1,Y_i+1),\cdots)$           
		& $\nicefrac{\lambda X_i(M_i-Y_i)}{M_i}$ \\ \cline{3-4} 
		&                       
		& $(\cdots,(X_i+1,Y_i-1),\cdots)$             
		& $\nicefrac{\lambda (N_i-X_i)Y_i}{M_i}$ \\ \hline
	\end{tabular}
	\caption{Scheme of transitions in the multi-colony model.}
	\label{tab:multi}
\end{table}

\noindent For each $i\in\Z^d$, let $\vec{\delta}_{i,A}=(X_n,Y_n)_{n\in\Z^d}$ and $\vec{\delta}_{i,D}=(\hat{X}_n,\hat{Y}_n)_{n\in\Z^d}$ be the configurations defined as
\begin{equation}
	\label{kron_del}
	(X_n,Y_n):= (\mathbf{1}_{\{n=i\}},0),\quad
	(\hat{X}_n,\hat{Y}_n):=(0,\mathbf{1}_{\{n=i\}})
	\qquad \forall \,n\in\Z^d.
\end{equation}
For two configurations $\eta = (\bar{X}_i,\bar{Y}_i)_{i\in\Z^d}$ and $\xi = (\hat{X}_i,\hat{Y}_i)_{i\in\Z^d}$, we define $\eta\pm \xi := (X_i,Y_i)_{i\in\Z^d}\in\mathcal{X}$ by setting, for each $i\in\Z^d$,
\begin{equation}
	\label{eq:addition-subtraction}
	\begin{aligned}
		X_i &= (\bar{X_i}\,\pm\,\hat{X_i})\mathbf{1}_{\{0\leq \bar{X}_i\pm\hat{X}_i\leq N_i\}}
		+N_i\,\mathbf{1}_{\{ \bar{X}_i\pm\hat{X}_i> N_i\}},\\
		Y_i &= (\bar{Y_i}\,\pm\,\hat{Y}_i)\mathbf{1}_{\{0\leq \bar{Y}_i\pm\hat{Y}_i\leq M_i\}}
		+M_i\,\mathbf{1}_{\{ \bar{Y}_i\pm\hat{Y}_i> M_i\}}.
	\end{aligned}
\end{equation}
Throughout the remainder of this paper, we adopt the convention given in \eqref{eq:addition-subtraction} for addition and subtraction of configurations in $\mathcal{X}$.

The generator $L$ for the process, acting on functions in 
\begin{equation}
	\label{eqnD}
	\mathcal{D} = \big\{f\in C(\mathcal{X})\colon\, f \text{ depends on finitely many coordinates}\big\},
\end{equation}
is given by
\begin{equation}
	\label{eqn8}
	L=L_\text{Mig}+L_\text{Res}+L_\text{Exc},
\end{equation}
where
\begin{equation}
	\label{eqn9}
	\begin{aligned}
		(L_{\text{Mig}}f)(\eta)
		&= \sum_{i\in\Z^d}\sum_{\overset{j\in\Z^d,}{j\neq i}}
		\frac{a(i,j)}{N_j}\Big\{X_i(N_j-X_j)[f(\eta-\vec{\delta}_{i,A})-f(\eta)]\\[-0.5cm]
		&\qquad\qquad\qquad\qquad\qquad +X_j(N_i-X_i)[f(\eta+\vec{\delta}_{i,A})-f(\eta)]\Big\}
	\end{aligned}
\end{equation}
describes the resampling of active individuals in \emph{different} colonies (= migration),
\begin{equation}
	\label{eqn10}
	(L_\text{Res}f)(\eta)
	= \sum_{i\in\Z^d}\frac{X_i(N_i-X_i)}{2N_i}[f(\eta-\vec{\delta}_{i,A})+f(\eta+\vec{\delta}_{i,A})-2f(\eta)]
\end{equation}
describes the resampling of active individuals in the \emph{same} colony, and
\begin{equation}
	\label{eqn11}
	\begin{aligned}
		&(L_\text{Exc}f)(\eta)\\
		&=\sum_{i\in\Z^d}\frac{\lambda}{M_i}\Big\{X_i(M_i-Y_i)[f(\eta-\vec{\delta}_{i,A}
		+\vec{\delta}_{i,D})-f(\eta)]+Y_i(N_i-X_i)[f(\eta+\vec{\delta}_{i,A}-\vec{\delta}_{i,D})-f(\eta)]\Big\}
	\end{aligned}
\end{equation}
describes the exchange of active and dormant individuals in the \emph{same} colony. 

From now on, we denote the process associated with the generator $L$ by 
\begin{equation}
	Z = (Z(t))_{t \geq 0}, \qquad Z(t) = (X_i(t),Y_i(t))_{i\in\Z^d},
\end{equation}
with $X_i(t)$ and $Y_i(t)$ representing the number of type-$\heartsuit$ active and dormant individuals at colony $i$ at time $t$, respectively. Since $Z$ is an interacting particle system, in order to show existence and uniqueness of the process, we can in principle follow the method described by Liggett in \cite[Chapter I, Section 3]{L}. However, for Liggett's method to work, a uniform bound on the sizes $(N_i, M_i)_{i\in\Z^d}$ is needed that we want to avoid. Fortunately, if $L$ is a Markov pregenerator (see \cite[Definition 2.1]{L}), then we can construct the process by providing a unique solution to the martingale problem for $L$. The following proposition tells us that $L$ is indeed a Markov pregenerator and thus prepares the ground for proving the well-posedness of the martingale problem for $L$. 

\begin{proposition}{\bf [Pregenerator]}
	\label{prop2}
	The generator $L$ defined in \eqref{eqn8}, acting on functions in $\mathcal{D}$ defined in \eqref{eqnD}, is a Markov pregenerator.
\end{proposition}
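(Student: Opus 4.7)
The plan is to verify Liggett's three defining properties of a Markov pregenerator \cite[Definition 2.1]{L}: (a) $\mathbf{1}\in\mathcal{D}$ and $L\mathbf{1}=0$; (b) $\mathcal{D}$ is dense in $C(\mathcal{X})$; and (c) the positive minimum principle, stating that if $f\in\mathcal{D}$ attains its minimum at some $\eta^*\in\mathcal{X}$, then $(Lf)(\eta^*)\geq 0$. Implicit in all three is that $L$ actually sends $\mathcal{D}$ into $C(\mathcal{X})$, and this is the only genuinely technical point.

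Property (a) is immediate, since every term in $L_{\text{Mig}}+L_{\text{Res}}+L_{\text{Exc}}$ has the form $[\text{rate}]\cdot[f(\eta')-f(\eta)]$, which vanishes when $f\equiv 1$. For (b), note that $\mathcal{X}=\prod_{i\in\Z^d}[N_i]\times[M_i]$ is a countable product of finite discrete spaces, hence a compact metrizable space; the cylinder algebra $\mathcal{D}$ contains the constants, is closed under products, and separates points, so the Stone--Weierstrass theorem gives the required density. For (c), inspection of Table~\ref{tab:multi} shows that every transition rate $q(\eta,\eta')$ appearing in $L$ is non-negative, and hence $(Lf)(\eta^*)=\sum_{\eta'\neq\eta^*}q(\eta^*,\eta')[f(\eta')-f(\eta^*)]\geq 0$ whenever $f(\eta^*)=\min_\eta f(\eta)$.

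The main step is therefore to verify that $Lf\in C(\mathcal{X})$ for every $f\in\mathcal{D}$. Fix $f\in\mathcal{D}$ depending only on coordinates in a finite set $F\subset\Z^d$. The contributions from $L_{\text{Res}}$ and $L_{\text{Exc}}$ are finite sums indexed by $i\in F$ of cylinder functions, hence themselves lie in $\mathcal{D}$. For $L_{\text{Mig}}f$, observe first that if $i\notin F$ then $f(\eta\pm\vec{\delta}_{i,A})=f(\eta)$, so the outer sum collapses to a finite sum over $i\in F$. For such a fixed $i$, the absolute value of the inner summand in $j\neq i$ is bounded by
\[
\frac{a(i,j)}{N_j}\bigl(X_i(N_j-X_j)+X_j(N_i-X_i)\bigr)\cdot 2\|f\|_\infty \;\leq\; 4\,N_i\,a(i,j)\,\|f\|_\infty,
\]
where the factor $1/N_j$ cancels against the crude bounds $X_i(N_j-X_j)\leq N_i N_j$ and $X_j(N_i-X_i)\leq N_i N_j$. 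Since $\sum_{j}a(i,j)=c<\infty$ by Assumption~\ref{assumpt1}, the series converges uniformly in $\eta\in\mathcal{X}$; each partial sum is a cylinder function and hence continuous on the compact space $\mathcal{X}$, so the uniform limit $L_{\text{Mig}}f$ is continuous.

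The main obstacle is precisely this well-definedness and continuity of $L_{\text{Mig}}f$: one must identify the cancellation of the $1/N_j$ factor built into the migration rates, which reduces the estimate to finiteness of the total migration rate guaranteed by Assumption~\ref{assumpt1} and thereby renders the pregenerator property insensitive to the sizes $N_j$ at distant colonies. Once this continuity is in hand, properties (a)--(c) follow by the routine arguments sketched above, and Proposition~\ref{prop2} is proved.
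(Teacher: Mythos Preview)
Your proof is correct and complete. The approach, however, differs from the paper's: rather than verifying Liggett's Definition~2.1 from scratch, the paper invokes \cite[Proposition~6.1 of Chapter~I]{L}, which reduces the pregenerator property to checking $\sum_{T\ni i}c_T<\infty$ for every site $i$, where $c_T=\sup_{\eta}c_T(\eta,W_T)$ is the supremum of the total transition rate involving the coordinates in $T$. After casting $L$ in Liggett's ``spin-flip'' form \eqref{p_eqn10}, one sees that $c_T=0$ for $|T|\geq 2$ and $c_{\{i\}}\leq (c+\lambda)N_i<\infty$, which is precisely Lemma~\ref{lem3}. Both arguments rest on the same key cancellation of $1/N_j$ against the migration rates and on $\sum_j a(i,j)=c<\infty$; the paper's route is shorter for readers fluent in Liggett's framework, whereas yours is self-contained and makes explicit why $L_{\text{Mig}}f$ lands in $C(\mathcal{X})$ even though it need not lie in $\mathcal{D}$.
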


\noindent
The existence of solutions to the martingale problem will be shown by using the techniques described in \cite{L}. In order to establish uniqueness of the solution, we will need to exploit the dual process. 


\subsubsection{Duality}

The dual process is a block-counting process associated to a spatial version of the interacting seed-bank coalescent described in Section \ref{sub1}. We briefly describe the spatial coalescent process in terms of an interacting particle system. At each site $i\in\Z^d$ there are two reservoirs, an \emph{active} reservoir and a \emph{dormant} reservoir, with $N_i\in\N$ and $M_i\in \N$ labeled locations, respectively. Each location in a reservoir can accommodate at most one particle. As before, we refer to the particles in an active and dormant reservoir as \emph{active} particles and \emph{dormant} particles, respectively. The dynamics of the interacting particle system is as follows (see Figure~\ref{fig:2}).

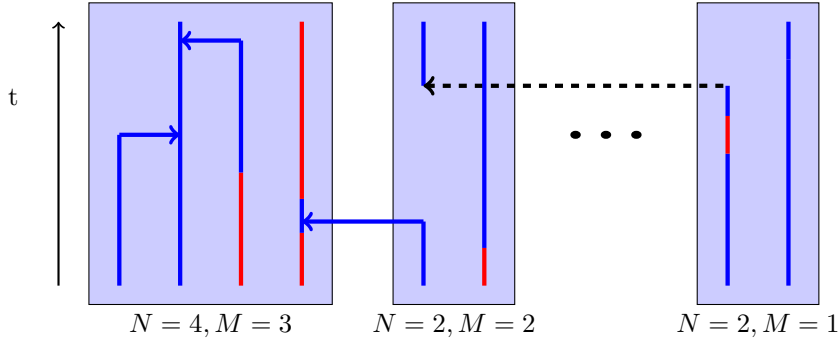
\begin{figure}[htbp]
	\vspace{0.4cm}
	\begin{center}
		\begin{tikzpicture}[xscale=0.8,yscale=0.5]
			\draw[black,thick,->] (0,0) to (0,7);
			\node[left] at (-0.5,5) {t};
			\draw[fill=blue!20!] (0.5,-0.5) rectangle (4.5,7.5);
			\draw[fill=blue!20!] (5.5,-0.5) rectangle (7.5,7.5);
			\node[align=center,below] at (2.5,-0.5) { $N=4,M=3$};
			\node[align=center,below] at (6.5,-0.5) { $N=2,M=2$};
			
			\draw [fill=black, ultra thick] (8.5,4) circle [radius=0.05];
			\draw [fill=black, ultra thick] (9,4) circle [radius=0.05] ;
			\draw [fill=black, ultra thick] (9.5,4) circle [radius=0.05];
			\draw[fill=blue!20!] (10.5,-0.5) rectangle (12.5,7.5);
			\node[align=center,below] at (11.5,-0.5) { $N=2,M=1$};
			
			\draw[ultra thick,blue] (1,0) to (1,4);
			\draw[ultra thick,blue,->] (1,4) to (2,4);
			
			\draw[ultra thick,blue] (2,0) to (2,7);
			
			\draw[ultra thick,red] (3,0) to (3,3);
			\draw[ultra thick,blue] (3,3) to (3,6.5);
			\draw[ultra thick,blue,->] (3,6.5) to (2,6.5);
			
			\draw[ultra thick,red] (4,0) to (4,1.4);
			\draw[ultra thick,blue] (4,1.4) to (4,2.3);
			\draw[ultra thick,red] (4,2.3) to (4,7);
			
			\draw[ultra thick,blue] (6,0) to (6,1.7);
			\draw[ultra thick,blue,->] (6,1.7) to (4,1.7);
			\draw[ultra thick,blue] (6,5.3) to (6,7);
			\draw[ultra thick,black,dashed,<-] (6,5.3) to (11,5.3);
			
			\draw[ultra thick,red] (7,0) to (7,1);
			\draw[ultra thick,blue] (7,1) to (7,7);
			
			\draw[ultra thick,blue] (11,0) to (11,3.5);
			\draw[ultra thick,red] (11,3.5) to (11,4.5);
			\draw[ultra thick,blue] (11,4.5) to (11,5.3);
			
			\draw[ultra thick,blue] (12,0) to (12,6);
			\draw[ultra thick,blue] (12,6) to (12,7);
		\end{tikzpicture}
	\end{center}
	\caption{\small Scheme of transitions in the interacting particle system. Each block depicts the reservoirs located at sites of $\Z^d$. The blue lines represent the evolution of active particles, the red lines represent the evolution of dormant particles.}
	\label{fig:2}
\end{figure}

\begin{itemize}
	\item 
	An active particle at site $i\in\Z^d$ becomes dormant at rate $\lambda$ by moving to a random labeled location (out of $M_i$ many) in the dormant reservoir at site $i$ when the chosen labeled location is empty, otherwise it remains in the active reservoir.
	\item 
	A dormant particle at site $i\in\Z^d$ becomes active at rate $\lambda K_i$ with $K_i=\tfrac{N_i}{M_i}$ by moving to a random labeled location (out of $N_i$ many) in the active reservoir at site $i$ when the chosen labeled location is empty, otherwise it remains in the dormant reservoir.
	\item 
	An active particle at site $i$ chooses a random labeled location (out of $N_j$ many) from the active reservoir at site $j$ at rate $a(i,j)$ and does the following:
	\begin{itemize}
		\item 
		If the chosen location in the active reservoir at site $j$ is empty, then the particle moves to site $j$ and thereby migrates from the active reservoir at site $i$ to the active reservoir at site $j$.
		\item 
		If the chosen location in the active reservoir at site $j$ is occupied by a particle, then it coalesces with that particle.
	\end{itemize}
\end{itemize}

\noindent
Note that an active particle can migrate between different sites in $\Z^d$ and can coalesce with another active particle even when they are at different sites in $\Z^d$. For simplicity, we will impose the same assumptions on the migration kernel $a(\cdot,\cdot)$ as stated in Assumption \ref{assumpt1}. A configuration $(\eta_i)_{i\in\Z^d}$ of the particle system is an element of $\prod_{i\in\Z^d}\{0,1\}^{N_i}\times\{0,1\}^{M_i}$. For $i\in\Z^d$, $\eta_i$ represents the state of the labeled locations in the active and the dormant reservoir at site $i$ (1 means occupied by a particle, 0 means empty). 

Below we give the definition of the block-counting process associated to the spatial coalescent process described above. Although it is an interesting problem to construct the block-counting process starting from a configuration with infinitely many particles, we will restrict ourselves to configurations with \emph{finitely many particles} only, because this makes the state space countable. Thus, the block-counting process is a continuous-time Markov chain on a countable state space and hence, in the definition below, it suffices to specify the possible transitions and their respective rates only.

\begin{definition}{\bf [Dual]}
	\label{def2}
	{\rm The dual process 
		\begin{equation}
			Z^* = (Z^*(t))_{t \geq 0}, \qquad Z^*(t) = (n_i(t),m_i(t))_{i\in\Z^d},
		\end{equation} 
		is a continuous-time Markov chain with state space 
		\begin{equation}
			\mathcal{X}^* := \Big\{(n_i,m_i)_{i\in\Z^d} \in 
			\prod_{i\in\Z^d}[N_i]\times[M_i]\colon\,\sum_{i\in\Z^d}(n_i+m_i)<\infty\Big\}
		\end{equation}
		and with transition rates
		\begin{equation}
			\label{eqn12}
			\begin{aligned}
				&(n_k,m_k)_{k\in\Z^d} \to\\
				&\begin{cases}
					\displaystyle
					(n_k,m_k)_{k\in\Z^d} - \vec{\delta}_{i,A} &\text{ at rate } 
					\tfrac{2a(i,i)}{N_i}\binom{n_i}{2}\mathbf{1}_{\{n_i\geq 2\}}
					+\sum_{\overset{j\in\Z^d,}{j\neq i}}\tfrac{n_ia(i,j)n_j}{N_j} \text{ for } i\in\Z^d,\\[0.2cm]
					(n_k,m_k)_{k\in\Z^d} - \vec{\delta}_{i,A}+\vec{\delta}_{i,D} &\text{ at rate } 
					\tfrac{\lambda n_i(M_i-m_i)}{M_i} \ \qquad \text{ for } i\in\Z^d,\\[0.2cm]
					(n_k,m_k)_{k\in\Z^d} + \vec{\delta}_{i,A}-\vec{\delta}_{i,D} &\text{ at rate } 
					\tfrac{\lambda (N_i-n_i)m_i}{M_i}\ \qquad \,\text{ for } i\in\Z^d,\\[0.2cm]
					(n_k,m_k)_{k\in\Z^d} - \vec{\delta}_{i,A} +\vec{\delta}_{j,A} &\text{ at rate } 
					\tfrac{n_ia(i,j)(N_j-n_j)}{N_j} \quad \text{ for } i\neq j\in\Z^d,
				\end{cases}
			\end{aligned}
		\end{equation}
		where the configurations $\vec{\delta}_{i,A},\vec{\delta}_{i,D}\in\mathcal{X}^*\subset\mathcal{X}$ are as in \eqref{kron_del}, and additions and subtractions of configurations are performed in accordance with \eqref{eq:addition-subtraction}.
	} \hfill $\Box$
\end{definition}

\noindent
Here, $n_i(t)$ and $m_i(t)$ are the number of active and dormant particles at site $i\in\Z^d$ at time $t$. The first transition describes the coalescence of an active particle at site $i$ with other active particles elsewhere. The second and third transition describe the movement of particles between the active and the dormant reservoir at site $i$. The fourth transition describes the migration of an active particle from site $i$ to site $j$. The following lemma tells us that the dual process $Z^*$ is a well-defined and non-explosive (equivalent to uniqueness) Feller process on the countable state space $\mathcal{X}^*$.

\begin{lemma}{\bf [Uniqueness of dual]}
	\label{lem:unique-dual}
	There exists a unique minimal Feller process $(Z^*(t))_{t\geq 0}$ on $\mathcal{X}^*$ with transition rates given in \eqref{eqn12}.
\end{lemma}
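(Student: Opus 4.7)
The plan is to construct $Z^*$ as the minimal continuous-time Markov chain on the countable state space $\mathcal{X}^*$ with rates \eqref{eqn12}, and then verify non-explosion by coupling with a system of independent ``colored walkers''. Since each $\eta^*=(n_i,m_i)_{i\in\Z^d}\in\mathcal{X}^*$ has finite support, the total outgoing rate from $\eta^*$ in \eqref{eqn12} is finite, so the standard construction for countable-state continuous-time Markov chains (via the jump chain and holding times) produces a minimal process $(Z^*(t))_{t<\tau_\infty}$ with the prescribed rates, defined up to its explosion time $\tau_\infty$. Uniqueness and extendability to $[0,\infty)$ then reduce to showing $\P_{\eta^*}(\tau_\infty=\infty)=1$ for every initial state $\eta^*\in\mathcal{X}^*$; the Feller property on the countable state space follows from non-explosion together with the locally finite transition rates.

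To bound the number of jumps of $Z^*$, I would couple it with $n_0:=\sum_{i\in\Z^d}(n_i+m_i)$ independent walkers living on $\Z^d\times\{A,D\}$, each with the following rates: from $(\xi,A)$ jump to $(j,A)$ at rate $a(\xi,j)$ for $j\in\Z^d$ and to $(\xi,D)$ at rate $\lambda$; from $(\xi,D)$ jump to $(\xi,A)$ at rate $\lambda K_\xi$. Because the capacity factors $(N_j-n_j)/N_j$, $(M_i-m_i)/M_i$ and $(N_i-n_i)/N_i$ appearing in \eqref{eqn12} are all at most $1$, and since coalescences in the dual merely remove particles without creating extra transitions, a graphical construction with independent Poisson clocks realises $Z^*$ as a thinning of the walker system. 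In particular, the number of jumps of $Z^*$ in $[0,T]$ is pathwise dominated by the total number of jumps of the walker system in $[0,T]$.

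It then suffices to show that a single walker makes a.s.\ finitely many jumps in $[0,T]$. Spatial jumps (migrations between distinct sites) occur only while the walker is active, at total rate $\sum_{j\neq\xi}a(\xi,j)\leq c$, so their count in $[0,T]$ is stochastically dominated by a Poisson$(cT)$ random variable, hence a.s.\ finite. Between two consecutive spatial jumps the walker is confined to a single site $\xi$ and alternates between the states $A$ and $D$; the number of $A$-visits before the next spatial jump is geometrically distributed with success probability $(c-\tfrac12)/(c-\tfrac12+\lambda)$, which is \emph{independent} of $\xi$ (and in particular of $K_\xi$), hence a.s.\ finite. Summing over the a.s.\ finitely many spatial jumps, each walker performs a.s.\ finitely many jumps in $[0,T]$; the coupling transfers this to $Z^*$, yielding $\tau_\infty=\infty$ a.s.

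The main obstacle is the unboundedness of the $D\to A$ rate $\lambda K_\xi$: no uniform bound on $K_i=N_i/M_i$ is imposed, so at a site with very large $K_\xi$ the walker cycles between $A$ and $D$ extremely rapidly, which at first sight threatens explosion. The resolution is the structural observation that, once the walker is active, it has a fixed, site-independent positive probability $(c-\tfrac12)/(c-\tfrac12+\lambda)$ of migrating before returning to dormancy, so the number of $A\leftrightarrow D$ cycles at each site remains geometric in distribution regardless of $K_\xi$. With non-explosion in hand, the minimal process extends uniquely to a Feller process on $\mathcal{X}^*$ with rates \eqref{eqn12}.
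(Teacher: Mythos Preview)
Your argument is correct: the coupling with independent walkers is valid, and the key observation that the number of $D\to A$ transitions is controlled by alternation (hence by the number of $A\to D$ transitions, whose rate $\lambda$ is site-independent) rather than by the unbounded rate $\lambda K_\xi$ is exactly what defuses the apparent explosion threat. A slightly cleaner bookkeeping than the geometric-per-epoch count is to note directly that in $[0,T]$ the number of $A\to D$ transitions is dominated by a Poisson$(\lambda T)$ variable, the number of $D\to A$ transitions exceeds this by at most one, and migrations are dominated by Poisson$(cT)$; this avoids having to discuss the incomplete final epoch.

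The paper, however, takes a much shorter route. It applies the Foster--Lyapunov criterion with the function $V(\xi)=1+\sum_{i}(n_i+m_i)$, the total particle count plus one. Since migrations and $A\leftrightarrow D$ exchanges leave $V$ invariant and coalescences decrease it, one gets $(L_{\text{dual}}V)(\xi)\le 0\le pV(\xi)$ immediately (the paper bounds $|L_{\text{dual}}V|$ by $\max\{1,c\}V$, which is more than needed). The structural reason non-explosion is easy here is precisely that the process is coalescing: the total number of particles is monotone non-increasing, so no Lyapunov growth can occur. Your coupling argument works harder because it controls the number of \emph{jumps} rather than the number of \emph{particles}; the payoff is that your analysis of the single-walker dynamics (migrations Poisson-dominated, $A\leftrightarrow D$ cycles geometric and site-independent) is exactly the kind of estimate used later in the paper (cf.\ the proof of Corollary~\ref{C.Mul_Dual}), so it is not wasted effort---but for the bare non-explosion statement the Lyapunov route is the efficient one.
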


Before we proceed we recall the definition of the martingale problem.

\begin{definition}{\bf [Martingale problem]}
	\label{def3}
	{\rm Suppose that $(L,\mathcal{D})$ is a Markov pregenerator, and let $\eta\in\mathcal{X}$. A probability measure $\P_\eta$ (or, equivalently, a process with law $\P_\eta$) on $D([0,\infty),\mathcal{X})$ is said to solve the martingale problem for $L$ with initial point $\eta$ if 
		\begin{itemize}
			\item 
			$\P_\eta[\xi_{(\cdot)}\in D([0,\infty),\mathcal{X}): \xi_0=\eta]=1$.
			\item 
			$(f(\eta_t)-\int_{0}^{t} (Lf)(\eta_s)\,\d s)_{s \geq 0}$ is a martingale relative to $(\P_\eta,(\mathcal{F}_t)_{t\geq 0})$ for all $f\in\mathcal{D}$, where $(\eta_t)_{t\geq 0}$ is the coordinate process on $D([0,\infty),\mathcal{X})$ and $(\mathcal{F}_t)_{t\geq 0}$ is the filtration given by $\mathcal{F}_t:=\sigma(\eta_s\,|\,s\leq t)$ for $t\geq 0.$ \hfill $\Box$
		\end{itemize}
	}
\end{definition}

The following theorem gives the duality relation between the dual process $Z^*$ and any solution to the martingale problem for $(L,\mathcal{D})$. This type of duality is sometimes referred to as martingale duality.

\begin{theorem}{\bf [Duality relation]}
	\label{T.Mul_Dual}
	Let the process $Z$ with law $\P_\eta$ be a solution to the martingale problem for $(L,\mathcal{D})$ starting from initial state $\eta=(X_i,Y_i)_{i\in\Z^d}\in\mathcal{X}$. Let $Z^*$ be the dual process with law $\P^\xi$ starting from initial state $\xi=(n_i,m_i)_{i\in\Z^d}\in\mathcal{X}^*$. For $t\geq 0$, let $\Gamma(t)$ be the random variable defined by
	\begin{equation}
		\label{def4}
		\Gamma(t) := \max\Big\{\|i\|\colon\,i\in\Z^d,\,\,n_i(s)+m_i(s)>0 \text{ for some } 0 \leq s \leq t\Big\}.
	\end{equation}
	Suppose that the sizes $(N_i)_{i\in\Z^d}$ of the active populations are such that, for any $T>0$,
	\begin{equation}
		\label{assumpt2}
		\sum_{i\in\Z^d} N_i\,\P^\xi\big(\Gamma(T)\geq \|i\|\big) < \infty.
	\end{equation}
	Then, for any $t\geq 0$,
	\begin{equation}
		\label{eqn15}
		\E_\eta\left[\prod_{i\in\Z^d}\frac{\binom{X_i(t)}{n_i}}{\binom{N_i}{n_i}}
		\frac{\binom{Y_i(t)}{m_i}}{\binom{M_i}{m_i}}\mathbf{1}_{\{n_i\leq X_i(t),m_i\leq Y_i(t)\}}\right]
		= \E^\xi\left[\prod_{i\in\Z^d}\frac{\binom{X_i}{n_i(t)}}{\binom{N_i}{n_i(t)}}
		\frac{\binom{Y_i}{m_i(t)}}{\binom{M_i}{m_i(t)}}\mathbf{1}_{\{n_i(t)\leq X_i,m_i(t)\leq Y_i\}}\right],
	\end{equation}
	where the expectations are taken with respect to $\P_\eta$ and $\P^\xi$, respectively.
\end{theorem}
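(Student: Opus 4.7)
The plan is to use the standard ``sandwich'' technique for martingale duality. Fix $t > 0$, let $Z$ and $Z^*$ run independently, and set
$$\phi(s) := \E_\eta \otimes \E^\xi\!\left[H(Z(s), Z^*(t-s))\right], \qquad s \in [0,t],$$
where $H(\eta,\xi)$ denotes the duality function appearing on either side of \eqref{eqn15}. Once one shows that $\phi$ is constant on $[0,t]$, comparing $\phi(0)$ and $\phi(t)$ gives the claim. The proof splits into (i) a pointwise algebraic identity $(LH(\cdot,\xi))(\eta) = (L^* H(\eta,\cdot))(\xi)$ for every $\eta \in \mathcal{X}$ and $\xi \in \mathcal{X}^*$, where $L^*$ is the generator of $Z^*$ read off from \eqref{eqn12}, and (ii) a domination argument, using \eqref{assumpt2}, that justifies differentiating $\phi$ and passing the derivative through the double expectation.

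For (i), I would decompose $L = L_{\text{Mig}} + L_{\text{Res}} + L_{\text{Exc}}$ and match each piece with the corresponding block of $L^*$. The elementary identities
$$\binom{x+1}{n} - \binom{x}{n} = \binom{x}{n-1}, \qquad \binom{x}{n} - \binom{x-1}{n} = \binom{x-1}{n-1}$$
reduce each piece to a telescoping calculation. The within-colony resampling and the exchange pieces repeat, mutatis mutandis, the single-colony computation underlying Theorem~\ref{T.Sing_Dual}. The only genuinely new term is cross-colony migration: expanding $X_i(N_j - X_j)\Delta_{i,-}H + X_j(N_i - X_i)\Delta_{i,+}H$, where $\Delta_{i,\pm}H := H(\eta \pm \vec{\delta}_{i,A},\xi) - H(\eta,\xi)$, via the binomial identities above and comparing with the dual coalescence rate $\tfrac{n_i a(i,j) n_j}{N_j}$ and the active-hopping rate $\tfrac{n_i a(i,j)(N_j - n_j)}{N_j}$ yields an exact match after short bookkeeping.

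For (ii), since $\xi$ has finite support we have $H(\cdot,\xi) \in \mathcal{D}$, and the martingale problem gives
$$\E_\eta\!\left[H(Z(s),\xi)\right] - H(\eta,\xi) = \int_0^s \E_\eta\!\big[(LH(\cdot,\xi))(Z(u))\big]\,du,$$
and by Lemma~\ref{lem:unique-dual} and the Kolmogorov equation an analogous identity holds for $Z^*$ with $L^*$. Freezing one process along the piecewise-constant trajectories of the other and applying (i) along each piece, one gets formally $\phi'(s) = 0$. The rigorous interchange of the time derivative with the double expectation requires an integrable envelope for $(LH(\cdot,Z^*(t-s)))(Z(s))$, uniformly in $s \in [0,t]$. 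This is where \eqref{assumpt2} enters: the migration part of $L$ applied to $H(\cdot,\zeta)$ is a sum over $i \in \text{supp}(\zeta)$ of contributions of absolute value at most of order $N_i$ (from the factor $X_j(N_i - X_i)/N_j$ summed over $j$ using $\sum_j a(i,j) = c$), and $\sum_{i} N_i\,\P^\xi(\Gamma(T) \geq \|i\|) < \infty$ supplies precisely the envelope needed for dominated convergence.

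The main obstacle is (ii). The generator identity in (i) is a purely algebraic calculation along the standard pattern for factorial/hypergeometric duality, but the \emph{inhomogeneous} population sizes $(N_i)_{i\in\Z^d}$ interact non-trivially with the expanding support of the dual, and \eqref{assumpt2} is crafted precisely to turn that interplay into an integrable envelope that legitimizes the formal sandwich argument.
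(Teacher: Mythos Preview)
Your proposal is correct and follows essentially the same route as the paper. The paper invokes \cite[Theorem~4.11, Corollary~4.13]{EK} directly rather than rederiving the sandwich argument $\phi(s)=\E[H(Z(s),Z^*(t-s))]$, but the conditions checked there---namely an integrable dominating variable $U_T$ for $|H|$, $|LH|$ and $|L^*H|$ built from $\sum_{\|i\|\le\Gamma(T)}N_i$ and controlled via \eqref{assumpt2}---are exactly your envelope in step~(ii). The one stylistic difference is in step~(i): the paper obtains the generator identity through the $\mathfrak{su}(2)$ representation machinery of Lemma~\ref{lem2} and Proposition~\ref{prop3}, whereas you compute it by hand from the Pascal identities; both are valid and yield the same pointwise relation $(LH(\cdot,\xi))(\eta)=(L^*H(\eta,\cdot))(\xi)$.
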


\noindent
Note that the duality function is a product over all colonies of the duality function that appeared in the single-colony model. The infinite products are well-defined: all but finitely many factors are 1, because of our assumption that there are only \emph{finitely many particles} in the dual process. Also note that there is no restriction on $(M_i)_{i\in\Z^d}$, the sizes of the dormant populations. This is because dormant individuals do not migrate and therefore do not feel the spatial extent of the system. 

At first glance it may seem that \eqref{assumpt2} places a severe restriction on $(N_i)_{i\in\Z^d}$, the sizes of the active populations. However, this is not the case. The following corollary provides us with a large class of active population sizes for which Theorem~\ref{T.Mul_Dual} is true under mild assumptions on the migration kernel $a(\cdot,\cdot)$.

\begin{corollary}{\bf [Duality criterion]}
	\label{C.Mul_Dual}
	Suppose that Assumption~{\rm \ref{assumpt1}} is in force. Then \eqref{assumpt2}, and consequently the duality relation in \eqref{eqn15}, hold for every $(N_i)_{i\in\Z^d} \in \mathcal{N}$, where
	\begin{itemize}
		\item[(a)] either
		\begin{equation}
			\label{def5}
			\mathcal{N} := \left\{(N_i)_{i\in\Z^d} \in \N^{\Z^d}\colon\, \lim_{\|i\| \to \infty} \frac{1}{\|i\|} \log N_i = 0\right\}
		\end{equation}
		when $\sum_{i\in\Z^d} \e^{\delta \|i\|}a(0,i) <\infty$ for some $\delta>0$,
		\item[(b)] or
		\begin{equation}
			\mathcal{N} := \left\{(N_i)_{i\in\Z^d} \in \N^{\Z^d}\colon\, \sup_{i\in\Z^d\backslash\{0\}}\frac{N_i}{\|i\|^\delta} < \infty\right\}
		\end{equation}
		when $\sum_{i\in\Z^d} \|i\|^\gamma a(0,i) <\infty$ for some $\delta >0 $ and $\gamma>d+\delta$.
	\end{itemize}
\end{corollary}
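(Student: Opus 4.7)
The plan is to estimate $\P^\xi(\Gamma(T)\ge r)$ by dominating the displacement of each of the $k:=\sum_{i\in\Z^d}(n_i+m_i)<\infty$ particles in the dual $Z^*$ by a single compound-Poisson variable, and then to apply Markov's inequality in the two tail regimes (a) and (b).

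\emph{Step 1 (single-particle domination).} Label the dual particles $\ell=1,\dots,k$ and let $P_\ell(s)$ be the location of particle $\ell$ at time $s\le\tau_\ell$, where $\tau_\ell$ is its (possibly infinite) coalescence time. By \eqref{eqn12} and Assumption~\ref{assumpt1}, whenever $\ell$ is active at site $i$ its total rate for visiting a new site $j\ne i$ (either by migration or by the coalescence that immediately precedes it) is at most $\sum_{j\ne i}a(i,j)\le c$, and by translation-invariance the jump-increment has law $a(0,\cdot)/c$, independent of the current configuration; dormant particles do not move. Hence, writing $\bar N_T\sim$~Poisson$(cT)$ and $(W_j)_{j\ge 1}$ for i.i.d.\ $\Z^d$-valued variables with law $a(0,\cdot)/c$, independent of $\bar N_T$, the triangle inequality combined with a standard thinning argument yields the stochastic bound
\begin{equation}
\label{eq:dom}
\max_{0\le s\le T\wedge\tau_\ell}\|P_\ell(s)-i_\ell\|\;\preceq\;R_T:=\sum_{j=1}^{\bar N_T}\|W_j\|,
\end{equation}
where $i_\ell$ is the initial location of particle $\ell$ and $\preceq$ denotes stochastic order.

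\emph{Step 2 (tail of $R_T$).} Since $\xi\in\mathcal{X}^*$ has finite support, $r_0:=\max_\ell\|i_\ell\|<\infty$, and a union bound over $\ell$ together with \eqref{eq:dom} gives
\begin{equation}
\P^\xi\bigl(\Gamma(T)\ge r\bigr)\;\le\;k\,\P(R_T\ge r-r_0),\qquad r\ge r_0.
\end{equation}
In case (a) the hypothesis $\sum_i\e^{\delta\|i\|}a(0,i)<\infty$ gives $\E[\e^{\delta\|W\|}]<\infty$, so $\E[\e^{\delta R_T}]=\exp\!\bigl(cT(\E[\e^{\delta\|W\|}]-1)\bigr)<\infty$ and Markov's inequality yields $\P(R_T\ge s)\le C_T\,\e^{-\delta s}$. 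In case (b), $\E[\|W\|^\gamma]<\infty$ and $\bar N_T$ has moments of all orders; conditioning on $\bar N_T$ and using Jensen's inequality (if $\gamma\ge 1$) or subadditivity of $x\mapsto x^\gamma$ (if $\gamma<1$) gives $\E[R_T^\gamma]<\infty$, and Markov's inequality yields $\P(R_T\ge s)\le C_T\,s^{-\gamma}$.

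\emph{Step 3 (summation).} Using $|\{i\in\Z^d:\|i\|=r\}|=O(r^{d-1})$, in case (a)
\begin{equation}
\sum_{i\in\Z^d}N_i\,\P^\xi\bigl(\Gamma(T)\ge\|i\|\bigr)\;\le\;C'_T\sum_{r\ge r_0}r^{d-1}\bigl(\max_{\|i\|=r}N_i\bigr)\e^{-\delta r},
\end{equation}
which is finite because \eqref{def5} forces $\max_{\|i\|=r}N_i=\e^{o(r)}$; in case (b),
\begin{equation}
\sum_{i\in\Z^d}N_i\,\P^\xi\bigl(\Gamma(T)\ge\|i\|\bigr)\;\le\;C'_T\sum_{r\ge r_0}r^{d-1+\delta-\gamma},
\end{equation}
which is finite precisely because $\gamma>d+\delta$. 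The main subtle point is \eqref{eq:dom}: one must check that, although the jump rates of an individual dual particle in \eqref{eqn12} depend on the entire configuration $(n_k,m_k)_{k\in\Z^d}$ and are switched off while the particle is dormant, the marginal displacement is nevertheless stochastically dominated by $R_T$. This is handled by a standard thinning of the underlying Poisson point processes driving the graphical construction of $Z^*$, using that all attempted jumps occur at rate at most $c$ and have increment law $a(0,\cdot)/c$.
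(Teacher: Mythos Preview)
Your proof is correct and follows essentially the same route as the paper: dominate the displacement of the dual particles by a compound-Poisson variable built from a Poisson clock of rate $c$ and i.i.d.\ increments with law $a(0,\cdot)/c$, then apply Markov's inequality in the exponential (case (a)) and polynomial (case (b)) regimes, and finally sum over shells of radius $r$ with $O(r^{d-1})$ lattice points. The only cosmetic difference is that the paper couples all $n$ particles simultaneously and bounds $\Gamma(T)\le\Gamma(0)+\sum_{l=1}^{N^*(T)}|X_l|$ with a single Poisson process $N^*$ of rate $cn$, whereas you bound each particle's displacement by its own copy of $R_T$ and then take a union bound; both devices yield the same tail estimate up to constants.
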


\noindent
Corollary~\ref{C.Mul_Dual} shows a \emph{trade-off}: the more we restrict the tails of the migration kernel, the less we need to restrict the sizes of the active populations. 


\subsubsection{Well-posedness}

We use a martingale problem for the generator $L$ defined in \eqref{eqn8}, in the sense of \cite[p.173]{EK}, to construct $Z$. The following proposition gives existence of solutions for any choice of the reservoir sizes. As for the uniqueness of solutions, we will see that a restriction on the sizes of the active populations is required.

\begin{proposition}{\bf [Existence]}
	\label{P.Mul_Exi}
	Let $L$ be the generator defined in \eqref{eqn8} acting on the set of local functions $\mathcal{D}$ defined in \eqref{eqnD}. Then for all $\eta\in \mathcal{X}$ there exists a solution $\P_\eta$ (a probability measure on $D([0,\infty),\mathcal{X})$) to the martingale problem of $(L,\mathcal{D})$ with initial state $\eta$.
\end{proposition}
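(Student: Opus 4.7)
The plan is to construct $\P_\eta$ as a weak subsequential limit of a family of finite-volume truncations of $L$, and then to verify the martingale identity in the limit. This is the standard relative-compactness approach, tailored to our inhomogeneous set-up.

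First I would truncate the generator. For $n \in \N$, set $\Lambda_n = [-n,n]^d \cap \Z^d$ and let $L_n$ be obtained from $L$ in \eqref{eqn8} by restricting every outer sum in \eqref{eqn9}--\eqref{eqn11} to sites in $\Lambda_n$. Since $L_n$ admits only finitely many transitions with bounded total rate, it generates a continuous-time Markov chain $Z^{(n)}$ on the finite space $\prod_{i \in \Lambda_n}[N_i]\times[M_i]$, with coordinates outside $\Lambda_n$ frozen at the values prescribed by $\eta$. Its law $\P_\eta^{(n)}$ on $D([0,\infty),\mathcal{X})$ solves the martingale problem for $(L_n,\mathcal{D})$ starting from $\eta$.

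Next I would establish tightness of $\{\P_\eta^{(n)}\}_{n\in\N}$. Endow $\mathcal{X}$ with the product topology, making it a compact metrizable space. By Jakubowski's criterion, tightness in $D([0,\infty),\mathcal{X})$ reduces to tightness of each one-site marginal $(X_i^{(n)},Y_i^{(n)})$ in $D([0,\infty),[N_i]\times[M_i])$. For fixed $i$ the total jump rate of this coordinate is bounded uniformly in $n$ by $(c+\lambda)(N_i+M_i)$, where $c = \sum_{j} a(0,j) < \infty$ by Assumption~\ref{assumpt1}, and the state space $[N_i]\times[M_i]$ is finite. Aldous's criterion therefore yields tightness, and Prokhorov's theorem produces a subsequence $(n_k)$ and a limit law $\P_\eta$ with $\P_\eta^{(n_k)} \Rightarrow \P_\eta$.

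The third and central step is to pass to the martingale identity in the limit. Fix $f \in \mathcal{D}$, local on a finite set $\Lambda \subset \Z^d$. Then $f$ is continuous on $\mathcal{X}$, and $Lf$ is bounded and continuous: continuity holds because sums such as $\sum_{j} a(i,j)X_j/N_j$ converge uniformly in $\eta$ by the Weierstrass $M$-test (summand bounded by $a(i,j)$, series summable), exhibiting $Lf$ as a uniform limit of continuous cylinder functions. Moreover,
\begin{equation*}
	\sup_{\eta \in \mathcal{X}} \big|(Lf)(\eta)-(L_n f)(\eta)\big| \leq 2\|f\|_\infty \sum_{i\in\Lambda}\sum_{j\notin \Lambda_n} a(i,j) \to 0, \qquad n\to\infty.
\end{equation*}
For any bounded continuous $h$ on $D([0,s],\mathcal{X})$ and $0 \leq s < t$, the martingale property for $\P_\eta^{(n)}$ reads
\begin{equation*}
	\E_\eta^{(n)}\!\Big[h(Z^{(n)}|_{[0,s]})\Big(f(Z^{(n)}(t))-f(Z^{(n)}(s))-\int_s^t (L_n f)(Z^{(n)}(u))\,\d u\Big)\Big] = 0.
\end{equation*}
Restricting $s,t$ to the cocountable set of continuity points of $u \mapsto \P_\eta \circ Z(u)^{-1}$ and combining weak convergence along $(n_k)$ with the uniform approximation $L_{n_k}f \to Lf$ and bounded convergence, I obtain the analogous identity under $\P_\eta$, which is the martingale property.

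The hard part will be the third step, in particular the continuity of $Lf$ on $\mathcal{X}$ with the product topology and the uniform approximation $L_n f \to L f$. Both rest squarely on the summability $\sum_{j} a(0,j) < \infty$ built into Assumption~\ref{assumpt1}; without it the migration contribution would fail to be a well-defined continuous function on $\mathcal{X}$ and the truncation approximation would break down. Crucially, no restriction on $(N_i)_{i\in\Z^d}$ enters the argument, which is consistent with the fact that the summability bound \eqref{assumpt2} and the dual are needed only for uniqueness, not for existence.
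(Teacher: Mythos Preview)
Your argument is correct and amounts to reconstructing, in this concrete setting, the general existence result that the paper simply cites: the paper invokes Liggett \cite[Chapter~I, Proposition~6.1 and Theorem~6.7]{L}, checking only the hypothesis $\sum_{T \ni i} c_T < \infty$ (which here reduces to $c_{\{i\}} \leq (c+\lambda)N_i < \infty$ via Lemma~\ref{lem3}). Liggett's theorem is proved precisely by the finite-volume truncation, tightness, and limit-passage argument you outline, so the two proofs coincide in substance; yours is self-contained, the paper's is brief. One small imprecision: your uniform bound on $|(Lf)(\eta)-(L_nf)(\eta)|$ is missing a factor of order $\max_{i \in \Lambda} N_i$ (coming from $X_i \leq N_i$ in the migration rates), but since $\Lambda$ is finite and fixed by $f$ this does not affect the conclusion.
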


The following theorem gives the well-posedness of the martingale problem for $(L,\mathcal{D})$ under a restricted class of sizes of the active populations and thus proves the existence of a unique Feller Markov process describing our multi-colony model.

\begin{theorem}{\bf [Well-posedness]}
	\label{T.Mul_Well}
	Let $(N_i)_{i\in\Z^d}\in\mathcal{N}$ and $(M_i)_{i\in\Z^d}\in\N^{\Z^d}$, and let $L$ be the generator defined in \eqref{eqn8} acting on the set of local functions $\mathcal{D}$ defined in \eqref{eqnD}. Then the following hold:
	\begin{itemize}
		\item 
		For all $\eta\in\prod_{i\in\Z^d}[N_i]\times[M_i]$ there exists a unique solution $Z$ in $D([0,\infty),\mathcal{X})$ of the martingale problem for $(L,\mathcal{D})$ with initial state $\eta$.
		\item 
		$Z$ is Feller and strong Markov, and its generator is an extension of $(L,\mathcal{D})$.
	\end{itemize}
\end{theorem}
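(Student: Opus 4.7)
Existence is already handled by Proposition~\ref{P.Mul_Exi}, so the task is to prove uniqueness in $D([0,\infty),\mathcal{X})$ and then derive the Feller and strong Markov properties. The plan is to use the duality relation to pin down the time-$t$ marginals of any solution, and then appeal to standard martingale-problem machinery. First, fix $\eta \in \mathcal{X}$ and let $\P_\eta^{(1)},\P_\eta^{(2)}$ be two solutions of the martingale problem for $(L,\mathcal{D})$ with initial state $\eta$. Since $(N_i)_{i\in\Z^d}\in\mathcal{N}$, Corollary~\ref{C.Mul_Dual} guarantees that the summability condition \eqref{assumpt2} holds, so Theorem~\ref{T.Mul_Dual} applies to \emph{both} solutions. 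Because the right-hand side of \eqref{eqn15} is expressed entirely in terms of the dual process $Z^*$ (which, by Lemma~\ref{lem:unique-dual}, is unique), the expectations of the duality function $D(\cdot,\xi)$ coincide under $\P_\eta^{(1)}$ and $\P_\eta^{(2)}$ for every $\xi \in \mathcal{X}^*$ and every $t\geq 0$.

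Next I would argue that these expectations determine the time-$t$ law of $Z(t)$ on the compact (Tychonoff) product space $\mathcal{X}$. For a finite $I\subset\Z^d$, letting $\xi$ range over configurations supported in $I$ yields, up to non-zero normalising constants, all factorial monomials $\prod_{i\in I}\binom{X_i}{n_i}\binom{Y_i}{m_i}$; these are $\prod_{i\in I}(N_i+1)(M_i+1)$ linearly independent functions on the finite set $\prod_{i\in I}[N_i]\times[M_i]$ and hence form a basis of the entire function space. Therefore the joint law of $(X_i(t),Y_i(t))_{i\in I}$ under the two solutions agrees for every finite $I$, and since cylinder sets generate the Borel $\sigma$-algebra of $\mathcal{X}$, the one-dimensional distributions of $Z$ at each $t$ coincide. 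With uniqueness of one-dimensional marginals in hand for every initial point, Theorem 4.4.2 of Ethier and Kurtz~\cite{EK} promotes this to uniqueness on path space and produces a strong Markov family $(\P_\eta)_{\eta\in\mathcal{X}}$. For the Feller property, I would observe that $(P_t D(\cdot,\xi))(\eta) = \E^\xi[D(\eta,Z^*(t))]$ is continuous in $\eta$ (because $Z^*(t)$ has finite support a.s., so only finitely many coordinates of $\eta$ contribute), while the linear span of the duality functions is a point-separating subalgebra of $C(\mathcal{X})$ containing constants; by Stone--Weierstrass it is uniformly dense, and since $\|P_t f - P_t g\|_\infty \leq \|f-g\|_\infty$ a standard approximation gives $P_t\colon C(\mathcal{X}) \to C(\mathcal{X})$. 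That the full generator extends $(L,\mathcal{D})$ is immediate from the definition of the martingale problem.

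\textbf{Main obstacle.} The genuine difficulty has been outsourced to the supporting results, namely proving the duality relation under the mild growth hypothesis on $(N_i)_{i\in\Z^d}$ (Theorem~\ref{T.Mul_Dual} together with Corollary~\ref{C.Mul_Dual}) and ruling out explosion of the dual (Lemma~\ref{lem:unique-dual}). Once these are granted, the present theorem is essentially structural. The only delicate point internal to the argument above is the measure-determining property of the duality functions: one must slice by finite coordinate sets $I\subset\Z^d$ and exploit the algebraic fact that factorial monomials span all functions on a finite product of finite sets, rather than attempt to work with the infinite product $\mathcal{X}$ directly. The use of the assumption $(N_i)_{i\in\Z^d}\in\mathcal{N}$ occurs exactly once, when invoking Corollary~\ref{C.Mul_Dual}; no restriction on $(M_i)_{i\in\Z^d}$ is needed because dormant individuals do not migrate and therefore do not feel the geographic extent of the system.
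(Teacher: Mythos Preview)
Your proposal is correct and follows essentially the same route as the paper: both use the duality relation (via Corollary~\ref{C.Mul_Dual} and Theorem~\ref{T.Mul_Dual}) together with the fact that the family $\{D(\cdot;\xi):\xi\in\mathcal{X}^*\}$ is measure-determining to pin down the one-dimensional marginals, and then invoke standard martingale-problem machinery. The only cosmetic differences are that the paper packages the uniqueness step by citing \cite[Proposition~4.7]{EK} directly (rather than your combination of one-dimensional agreement plus \cite[Theorem~4.4.2]{EK}), proves the separation property as a standalone result (Proposition~\ref{prop4}, via a Vandermonde argument equivalent to your factorial-monomial basis claim), and obtains the Feller and strong Markov properties by quoting \cite[Chapter~I, Theorem~6.8]{L} instead of your Stone--Weierstrass argument.
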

In view of the above result, from here onwards, we implicitly assume that the restriction on $(N_i)_{i\in\Z^d}$ to $\mathcal{N}$ is always in force.
\subsubsection{Equilibrium}
Let us set $Z_i(t):=(X_i(t),Y_i(t))$ for $i\in\Z^d$ and denote by $\mu(t)$ the distribution of $Z(t)$. Further, for each $\theta\in[0,1]$  and $i\in\Z^d,$ let $\nu_{\theta}^i$ be the probability measure on $[N_i]\times[M_i]$ defined as
\begin{equation}
	\nu_\theta^i:=\mathrm{Binomial}(N_i,\theta)\otimes \mathrm{Binomial}(M_i,\theta).
\end{equation}
For $\theta\in[0,1]$, let $\nu_\theta$ be the distribution on $\mathcal{X}$ defined by $\displaystyle\nu_\theta:=\bigotimes_{i\in\Z^d}\nu_\theta^i$ and set
\begin{equation}
	\mathcal{J}:=\{\nu_\theta\,|\,\theta\in[0,1]\}.
\end{equation}
Let $D\,:\,\mathcal{X}\times\mathcal{X}^*\to [0,1]$ be the function defined by
\begin{equation}
	\label{duality.func}
	D((X_k,Y_k)_{k\in\Z^d};(n_k,m_k)_{k\in\Z^d})
	:=\prod_{i\in\Z^d}\frac{\binom{X_i}{n_i}}{\binom{N_i}{n_i}}\frac{\binom{Y_i}{m_i}}{\binom{M_i}{m_i}}
	\mathbf{1}_{\{n_i\leq X_i,m_i\leq Y_i\}}.
\end{equation}

\begin{theorem}{\bf [Convergence to equilibrium]}
	\label{T.Equil}
	Suppose that $\mu(0)=\nu_\theta\in\mathcal{J}$ for some $\theta\in[0,1]$. Then there exists a probability measure $\nu$ determined by the parameter $\theta$ such that
	\begin{itemize}
		\item $\lim\limits_{t\to\infty}\mu(t) = \nu$.
		\item $\nu$ is an equilibrium for the process $Z$.
		\item $\E_\nu [D(Z(0);\eta)] = \lim\limits_{t\to\infty}\E^\eta[\theta^{|Z^*(t)|}]$, where $D(\cdot,\cdot)$ is defined in \eqref{duality.func}, the right expectation is taken w.r.t.\ the dual process $Z^*$ started at configuration $\eta=(n_i,m_i)_{i\in\Z^d}\in\mathcal{X}^*$ and $|Z^*(t)|:=\sum_{i\in\Z^d} [n_i(t)+m_i(t)]$ is the total number of dual particles present at time $t$.
	\end{itemize}
\end{theorem}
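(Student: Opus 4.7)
The plan is to exploit the factorial duality in Theorem~\ref{T.Mul_Dual} together with the fact that the total dual mass $|Z^*(t)|$ is non-increasing. First, fix $\xi=(n_i,m_i)_{i\in\Z^d}\in\mathcal{X}^*$ and integrate the duality relation \eqref{eqn15} against $\nu_\theta$. Since $D(\cdot;\cdot)$ is bounded by $1$, Fubini gives
\begin{equation*}
\int_{\mathcal{X}} \E_\eta[D(Z(t);\xi)]\,\nu_\theta(d\eta)
= \E^\xi\left[\int_{\mathcal{X}} D(\eta;Z^*(t))\,\nu_\theta(d\eta)\right].
\end{equation*}
For $X\sim\mathrm{Bin}(N,\theta)$ the classical factorial moment identity reads $\E[\binom{X}{n}]=\binom{N}{n}\theta^n$, hence
\begin{equation*}
\int_{\mathcal{X}} D(\eta;\xi)\,\nu_\theta(d\eta)
= \prod_{i\in\Z^d}\theta^{n_i+m_i}=\theta^{|\xi|}.
\end{equation*}
Applied with $\xi$ replaced by the random configuration $Z^*(t)$, this yields the key identity
\begin{equation*}
\E_{\mu(t)}[D(Z(0);\xi)] = \E^\xi\!\left[\theta^{|Z^*(t)|}\right], \qquad t\geq 0,\;\xi\in\mathcal{X}^*.
\end{equation*}

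Next, inspecting the four transitions in \eqref{eqn12} shows that only the first one (coalescence) changes the total mass $|Z^*(t)|=\sum_i[n_i(t)+m_i(t)]$, and it always decreases it by $1$; exchange and migration preserve it. Therefore $t\mapsto|Z^*(t)|$ is non-increasing almost surely, so $t\mapsto\theta^{|Z^*(t)|}$ is non-decreasing for $\theta\in[0,1]$. By monotone convergence,
\begin{equation*}
\phi(\xi) := \lim_{t\to\infty}\E^\xi\!\left[\theta^{|Z^*(t)|}\right] \in [0,1]
\end{equation*}
exists for every $\xi\in\mathcal{X}^*$, and consequently $\lim_{t\to\infty}\E_{\mu(t)}[D(Z(0);\xi)] = \phi(\xi)$.

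To produce a limiting measure $\nu$, I would exploit that $\mathcal{X}$, as a countable product of finite sets, is a compact Polish space in the product topology, and that each $D(\cdot;\xi)$ is a continuous cylinder function (it depends on finitely many coordinates only). For any finite window $\Lambda\subset\Z^d$, the family $\{D(\cdot;\xi):\xi\in\mathcal{X}^*,\;\mathrm{supp}(\xi)\subset\Lambda\}$ consists of factorial monomials in the coordinates indexed by $\Lambda$, and its linear span is a unital subalgebra of $C(\prod_{i\in\Lambda}[N_i]\times[M_i])$ separating points; hence it is determining for the marginal distributions on finite windows. Convergence of the factorial moments $\phi(\xi)$ therefore pins down a consistent family of finite-dimensional distributions, which by Kolmogorov extension defines a unique $\nu\in\mathcal{P}(\mathcal{X})$; and by Stone--Weierstrass plus tightness (automatic on the compact $\mathcal{X}$), $\mu(t)\Rightarrow\nu$ weakly.

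The remaining two bullets are then formal. Invariance follows from the Feller property of Theorem~\ref{T.Mul_Well}: for any $f\in C(\mathcal{X})$ and $s\geq 0$, $\int (P_sf)\,d\mu(t)=\int f\,d\mu(t+s)$; letting $t\to\infty$ on both sides (using $P_sf\in C(\mathcal{X})$) gives $\int P_s f\,d\nu=\int f\,d\nu$, so $\nu$ is stationary. The third bullet is immediate from the key identity and weak convergence applied to the continuous function $D(\cdot;\eta)$:
\begin{equation*}
\E_\nu[D(Z(0);\eta)] = \lim_{t\to\infty}\E_{\mu(t)}[D(Z(0);\eta)] = \lim_{t\to\infty}\E^\eta\!\left[\theta^{|Z^*(t)|}\right].
\end{equation*}

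The heart of the argument, and the point most likely to cause trouble, is the passage from convergence of factorial moments to weak convergence of measures. The monotone-convergence step is cheap once one verifies that the total mass of the dual is monotone; the factorial moment computation for binomials is routine; but one must take some care to argue that the family $\{D(\cdot;\xi)\}_{\xi\in\mathcal{X}^*}$ is separating on $\mathcal{P}(\mathcal{X})$ and that this together with the compactness of $\mathcal{X}$ upgrades pointwise convergence of factorial moments to weak convergence of $\mu(t)$.
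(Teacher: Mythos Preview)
Your proof is correct and follows essentially the same route as the paper: integrate the duality relation against $\nu_\theta$, use the binomial factorial-moment identity to reduce to $\E^\xi[\theta^{|Z^*(t)|}]$, invoke monotonicity of the dual mass to get convergence, and then use that the family $\{D(\cdot;\xi)\}_{\xi\in\mathcal{X}^*}$ is measure-determining (the paper's Proposition~\ref{prop4}) together with compactness of $\mathcal{X}$ to upgrade to weak convergence. Your explicit verification of invariance via the Feller property is a detail the paper leaves implicit.
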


\begin{corollary}\label{C.Equil}
	Let $\nu$ be the equilibrium measure of $Z$ in Theorem \ref{T.Equil} corresponding to $\theta\in[0,1]$. Then
	\begin{equation}
		\E_\nu\left[\tfrac{X_i(0)}{N_i}\right]=\E_\nu\left[\tfrac{Y_i(0)}{M_i}\right] = \theta.
	\end{equation}
\end{corollary}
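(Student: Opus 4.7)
The plan is to invoke the third bullet of Theorem~\ref{T.Equil} with the two one-particle dual configurations $\vec{\delta}_{i,A}$ and $\vec{\delta}_{i,D}$ defined in \eqref{kron_del}. These are the simplest nontrivial test configurations in $\mathcal{X}^*$, and their duality functions pick out precisely the single-site marginals we want to evaluate.

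First I would compute the left-hand side of the duality identity for these two configurations. Using the definition of $D$ in \eqref{duality.func}, all factors indexed by $j\neq i$ reduce to $1$ (the binomial coefficients $\binom{X_j}{0}/\binom{N_j}{0}$ and $\binom{Y_j}{0}/\binom{M_j}{0}$ both equal $1$, and the indicators are trivially satisfied), so that
\begin{equation*}
D\bigl(Z(0);\vec{\delta}_{i,A}\bigr) = \frac{X_i(0)}{N_i},\qquad
D\bigl(Z(0);\vec{\delta}_{i,D}\bigr) = \frac{Y_i(0)}{M_i}.
\end{equation*}
Taking $\E_\nu$ on both sides gives exactly the quantities in the claim.

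Next I would evaluate the right-hand side $\lim_{t\to\infty}\E^{\vec{\delta}_{i,A}}[\theta^{|Z^*(t)|}]$. The key observation is that under the dual dynamics in \eqref{eqn12}, the total number of particles $|Z^*(t)|$ can only decrease via the first transition (coalescence with another active particle), which contributes a strictly positive rate only when at least two particles occupy the active reservoirs. Starting from $\vec{\delta}_{i,A}$ we have $|Z^*(0)|=1$, so no coalescence is possible and the subsequent dynamics merely moves this single particle between active/dormant states and across sites of $\Z^d$, preserving total mass. Hence $|Z^*(t)|\equiv 1$ almost surely, and $\E^{\vec{\delta}_{i,A}}[\theta^{|Z^*(t)|}]=\theta$ for every $t\geq 0$. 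The identical argument applies starting from $\vec{\delta}_{i,D}$. Combining with the left-hand side computation yields $\E_\nu[X_i(0)/N_i]=\E_\nu[Y_i(0)/M_i]=\theta$.

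There is essentially no obstacle here: the corollary is a direct consequence of Theorem~\ref{T.Equil} together with the conservation of particle number in the one-particle dual, and the only verification needed is that the duality function simplifies correctly on one-particle configurations. The proof requires no hypothesis beyond those already in force for the equilibrium theorem.
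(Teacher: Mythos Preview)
Your proof is correct and follows exactly the same approach as the paper: apply the third bullet of Theorem~\ref{T.Equil} with $\eta=\vec{\delta}_{i,A}$ and $\eta=\vec{\delta}_{i,D}$, then use that $|Z^*(t)|\equiv 1$ for a one-particle dual so that $\E^\eta[\theta^{|Z^*(t)|}]=\theta$. You simply spell out in more detail why the duality function reduces to $X_i(0)/N_i$ (resp.\ $Y_i(0)/M_i$) and why a single dual particle cannot coalesce, which the paper leaves implicit.
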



\subsubsection{Clustering criterion}

We next analyse the long-time behaviour of the multi-colony Moran model with seed-banks. Our interest is to capture the nature of the equilibrium. To be precise, we investigate whether coexistence of different types is possible in equilibrium. The measures $\bigotimes_{i\in\Z^d}\delta_{(0,0)}$ and $\bigotimes_{i\in\Z^d}\delta_{(N_i,M_i)}$ are the trivial equilibria where the system concentrates on only one of the two types. When the system converges to an equilibrium that is not a mixture of these two trivial equilibria, we say that \emph{coexistence} happens. For $i\in\Z^d,$ let us denote the frequency of type-$\heartsuit$ active and dormant individuals at colony $i$ at time $t$ by $x_i(t):=\tfrac{X_i(t)}{N_i}$ and $ y_i(t):=\tfrac{Y_i(t)}{M_i}$ respectively.

\begin{definition}{\bf [Clustering and Coexistence]}
	\label{def;dich}
	{\rm The system is said to exhibit \emph{clustering} if the following hold:
		\begin{itemize}
			\item $\lim\limits_{t\to\infty}\P_\eta(x_i(t)\in\{0,1\})=1,\quad \lim\limits_{t\to\infty}\P_\eta(y_i(t)\in\{0,1\})=1$,
			\item $\lim\limits_{t\to\infty}\P_\eta(x_i(t)\neq x_j(t))=0,\quad \lim\limits_{t\to\infty}\P_\eta(y_i(t)\neq y_j(t))=0$,
			\item  $\lim\limits_{t\to\infty}\P_\eta(x_i(t)\neq y_j(t))=0$,
		\end{itemize}
		for all $i,j\in\Z^d$ and any initial configuration $\eta \in\mathcal{X}$. Otherwise, the system is said to exhibit \emph{coexistence}.}
	\hfill$\Box$
\end{definition}

\noindent
The above conditions make sure that if an equilibrium exists, then it is a mixture of the two trivial equilibria. 

The following criterion, which follows from Theorem \ref{C.Mul_Dual}, gives an equivalent condition for clustering.

\begin{theorem}{\bf [Clustering criterion]}\label{T.Dichotomy}
	The system clusters if and only if in the dual process defined in Definition \ref{def2} two particles, starting from any locations in $\Z^d$ and any states (active or dormant), coalesce with probability $1$.
\end{theorem}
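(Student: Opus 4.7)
Both implications will be driven by the factorial-moment duality of Theorem~\ref{T.Mul_Dual} together with the convergence result Theorem~\ref{T.Equil}. The pivotal observation is that $|Z^*(t)|=\sum_{i}[n_i(t)+m_i(t)]$ is non-increasing in $t$, since only the coalescence transition in \eqref{eqn12} decreases it (the dormancy switches and migrations preserve it). Hence $|Z^*(t)|$ converges a.s.\ to some random limit $L\in\N$ with $L\geq 1$ whenever $|Z^*(0)|\geq 1$. Moreover, since any pair of dual particles coalesces a.s.\ by hypothesis, an easy induction on the initial particle count shows that $L=1$ a.s.\ starting from any $\xi\in\mathcal{X}^*$ with $|\xi|\geq 1$.

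\textbf{Reverse direction (clustering $\Rightarrow$ coalescence).} I proceed by contrapositive. Suppose there exists a two-particle configuration $\xi_0\in\mathcal{X}^*$ with $\P^{\xi_0}(L\geq 2)>0$. Fix $\theta\in(0,1)$ and start the forward process from the product-binomial measure $\nu_\theta\in\mathcal{J}$. Theorem~\ref{T.Equil} identifies the equilibrium $\nu$ by
\[
\E_\nu[D(Z(0);\xi_0)]=\lim_{t\to\infty}\E^{\xi_0}\bigl[\theta^{|Z^*(t)|}\bigr]=\E^{\xi_0}[\theta^L]
=\sum_{l\geq 1}\theta^l\,\P^{\xi_0}(L=l)<\theta,
\]
where the strict inequality comes from $\P^{\xi_0}(L\geq 2)>0$ and $\theta^l<\theta$ for $l\geq 2$. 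If clustering held, then $\nu$ would be supported on $\{\vec 0,\vec 1\}$ (with $\vec 0=(0,0)_{i\in\Z^d}$, $\vec 1=(N_i,M_i)_{i\in\Z^d}$), and the weights $(1-\theta,\theta)$ are forced by Corollary~\ref{C.Equil}. A direct evaluation of the duality function then gives $\E_\nu[D(Z;\xi_0)]=\theta$, contradicting the strict inequality above.

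\textbf{Forward direction (coalescence $\Rightarrow$ clustering).} Under the coalescence assumption and the induction above, $\lim_{t\to\infty}\E^{\xi}[\theta^{|Z^*(t)|}]=\theta$ for every $\xi$ with $|\xi|\geq 1$. For binomial initial measure $\nu_\theta$, Theorem~\ref{T.Equil} then yields $\E_\nu[D(Z(0);\xi)]=\theta$ for all such $\xi$. Specializing to $\xi=\delta_{i,A}$ and $\xi=2\delta_{i,A}$ gives $\E_\nu[X_i]=\theta N_i$ and $\E_\nu[X_i(X_i-1)]=\theta N_i(N_i-1)$, and combining these yields
\[
\E_\nu[X_i(N_i-X_i)]=N_i\E_\nu[X_i]-\E_\nu[X_i(X_i-1)]-\E_\nu[X_i]=0,
\]
which forces $X_i\in\{0,N_i\}$ $\nu$-a.s.\ since $X_i(N_i-X_i)\geq 0$. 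The analogous argument with $\xi=2\delta_{i,D}$ gives $Y_i\in\{0,M_i\}$ $\nu$-a.s. Cross-correlations handled through $\xi=\delta_{i,A}+\delta_{j,A}$, $\xi=\delta_{i,A}+\delta_{j,D}$, etc., produce $\E_\nu[(X_iN_j-X_jN_i)^2]=0$ and the analogous equalities for $(Y_i,Y_j)$ and for $(X_i,Y_j)$. Together these identify $\nu=(1-\theta)\delta_{\vec 0}+\theta\,\delta_{\vec 1}$, the mono-type mixture.

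\textbf{Extension to arbitrary initial configurations and main obstacle.} The clustering definition requires the three limit statements for \emph{every} $\eta\in\mathcal{X}$, not only for $\eta\sim\nu_\theta$. Via duality one has the identities
\[
\E_\eta\bigl[x_i(t)(1-x_i(t))\bigr]=\tfrac{N_i-1}{N_i}\Bigl(\E^{\delta_{i,A}}[D(\eta;Z^*(t))]-\E^{2\delta_{i,A}}[D(\eta;Z^*(t))]\Bigr),
\]
and similar expressions for $\E_\eta[(x_i(t)-x_j(t))^2]$ and $\E_\eta[(x_i(t)-y_j(t))^2]$, each as a combination of $\phi_t^\xi:=\E^\xi[D(\eta;Z^*(t))]$ for $|\xi|\in\{1,2\}$. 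The plan is to use a Harris-type graphical coupling between a two-particle dual and a one-particle dual in which the ``surviving'' particle, after the coalescence time $T_c<\infty$ a.s., follows the same random events as the one-particle dual; then $|\phi_t^{2\delta_{i,A}}-\phi_t^{\delta_{i,A}}|$ is dominated by $\P(T_c>t)\to 0$. The delicate point---and what I expect to be the main technical obstacle---is that before coalescence the two particles in the dual \emph{interact repulsively} through the finite-capacity constraints in \eqref{eqn12} (a dormancy or migration attempt by one particle may fail because the target slot is occupied by the other), so the coupling is not perfect on $[0,T_c)$. Controlling these divergence events and showing they translate into only an $o(1)$ correction as $t\to\infty$ is the crux; alternatively, this can be bypassed by exploiting compactness of $\mathcal{X}$ to argue that any subsequential weak limit of $\P_\eta(Z(t)\in\cdot)$ is an invariant measure whose factorial moments, determined by the one-particle limit of the dual, coincide with those of a mono-type mixture.
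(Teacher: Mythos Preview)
Your reverse direction is sound and close in spirit to the paper's: the paper also starts from $\nu_\theta$ and computes, via duality, that $\E_{\nu_\theta}[\Delta_{(i,A),(i,D)}(t)]=\theta(1-\theta)\,\P^{((i,A),(i,D))}(\tau\geq t)$, which forces $\P(\tau=\infty)=0$ when clustering holds. Your reformulation through Theorem~\ref{T.Equil} and Corollary~\ref{C.Equil} reaches the same conclusion by a slightly different bookkeeping.

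The forward direction, however, is incomplete exactly where you flag it. Clustering in Definition~\ref{def;dich} is required for \emph{every} deterministic initial configuration $\eta$, so the argument via $\nu_\theta$ and Theorem~\ref{T.Equil} does not suffice, and your proposed coupling of the one- and two-particle duals is not carried out. The repulsive interaction you identify is a real obstruction to a naive Harris coupling: before coalescence the marginal law of $\xi_1(t)$ in the two-particle dual is genuinely different from the law of the single-particle walk, and there is no obvious reason the discrepancy should vanish without further input. Your alternative via compactness and invariant measures does not bypass the problem either, since identifying any subsequential limit as mono-type still requires control of \emph{second} moments, i.e.\ the two-particle dual.

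The paper closes this gap without any coupling, by a short but decisive trick (Lemma~\ref{lem_corr}). Writing out $\E_\eta\bigl[\tfrac{X_i(t)}{N_i}-M_{(i,A),(j,\beta)}(t)\bigr]$ via duality (Lemmas~\ref{lem_first}--\ref{lem_second}) gives an expression of the form
\[
\sum_{k,\gamma}\tfrac{Z_{k,\gamma}(0)}{\text{size}}\Bigl[\P^{(i,A)}(\xi(t)=(k,\gamma))-\P^{((i,A),(j,\beta))}(\xi_1(t)=(k,\gamma),\tau<t)\Bigr]-Q(t),
\]
with $Q(t)\geq 0$ and coefficients in $[0,1]$. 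The left-hand side is nonnegative for \emph{every} $\eta$ because it is a heterozygosity. Choosing $\eta$ to be all-$\heartsuit$ at a single site-state $(k,\gamma)$ and all-$\spadesuit$ elsewhere isolates a single bracket and shows it is nonnegative; this is the correlation inequality $\P^{(i,\alpha)}(\xi(t)=(k,\gamma))\geq \P^{((i,\alpha),(j,\beta))}(\xi_1(t)=(k,\gamma),\tau<t)$. Feeding this back into the general-$\eta$ expression and summing over $(k,\gamma)$ gives the uniform bound $\E_\eta[\tfrac{X_i(t)}{N_i}-M_{(i,A),(j,\beta)}(t)]\leq \P^{((i,A),(j,\beta))}(\tau\geq t)\to 0$, which is precisely the $o(1)$ control you were seeking. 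This is the missing idea in your proposal. (A minor secondary point: your use of $\xi=2\delta_{i,A}$ tacitly assumes $N_i\geq 2$; the paper handles the degenerate case $N_i=1$ separately.)
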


\noindent
Note that the system clusters if and only if the genetic variability at time $t$ between any two colonies converges to 0 as $t\to\infty$. From the duality relation in Theorem \ref{T.Mul_Dual} it follows that this quantity is determined by the state of the dual process starting from two particles.


\section{Proofs: duality and equilibrium for the single-colony model}
\label{s.duality}


Section~\ref{ss.repr} contains the proof of Theorem \ref{T.Sing_Dual}, which follows the algebraic approach to duality described in \cite{CGGR,SSV}. Section~\ref{ss.eq} contains the proof of Proposition \ref{prop1} and Corollary \ref{T.Sin_Eq}, which uses the duality in the single-colony model.


\subsection{Duality and change of representation}
\label{ss.repr}

Before we proceed with the proof of Theorem~\ref{T.Sing_Dual}, and other results related to \emph{stochastic duality}, it is worth stressing the importance of duality theory. Though originally introduced in the context of interacting particle systems, over the last decade duality theory has gained popularity in various fields, ranging from statistical physics and stochastic analysis to population genetics. One reason behind this wide interests is the simplification that duality provides: it often allows one to extract information about a complex stochastic process through a simpler process. To date, in the literature there exist two systematic approaches towards duality, namely, pathwise construction and Lie-algebraic framework. The former of the two approaches is more practical and widespread in the context of mathematical population genetics \cite{GLW,DK96,HA,JK13}, while the latter has been developed more recently and reveals deeper mathematical structures behind duality, and often also provides a larger class of duality functions (see e.g.\ \cite{CGGR}, \cite{FGG}, \cite{G2018}, \cite{SSV} for a general overview and further references). In what follows, we adopt the Lie algebraic framework suggested by Carinci et al. (2015)\cite{CGGR} and prepare the ground for this setting. The downside is that this approach does not capture the underlying genealogy of the original process. However, it does offer the opportunity to obtain a larger class of duality functions by applying symmetries from the Lie algebra to an already existing duality function \cite{GKRV}. In this paper we refrain from exploring the latter aspect of the Lie-algebraic framework.

We start with briefly recalling that a (real) Lie algebra $\mathfrak{g}$ is a linear space over $\R$ endowed with a so-called \emph{Lie bracket} $[\cdot,\cdot]\colon\,\mathfrak{g}\times\mathfrak{g}\to\mathfrak{g}$ that is bilinear, skew-symmetric and satisfies the \emph{Jacobi identity} \cite{SSV}. The requirement of the bilinearity and skew-symmetry uniquely characterizes a Lie bracket by its action on a basis of $\mathfrak{g}$. An example of a (real) Lie algebra is the well-known $\mathfrak{su}(2)$-algebra, which is the 3-dimensional vector space over $\R$ defined by the action of a Lie bracket on its basis elements $\{J^+, J^-, J^0\}$ as 
\begin{equation}
	\label{eq: su(2) definition}
	[J^0,J^+]=J^+,\quad[J^0,J^-]=-J^-,\quad [J^-,J^+]=-2J^0.
\end{equation} 
For $\alpha\in\N$, let $V_\alpha$ be the linear space of all functions $ f\colon\,[\alpha] \to \R$, and let $\mathfrak{gl}(V_\alpha)$ denote the space of all linear operators on $V_\alpha$. Note that $\mathfrak{gl}(V_\alpha)$ is a  $(1+\alpha)^2$-dimensional Lie algebra with the natural choice of Lie bracket given by $[A,B]:=AB-BA$ for $A,B\in\mathfrak{gl}(V_\alpha)$. Let us define the operators $J^{\alpha,\pm},J^{\alpha,0},A^{\alpha,\pm},A^{\alpha,0}\in\mathfrak{gl}(V_\alpha)$ acting on $ f\,\colon\,[\alpha] \to \R$ as
\begin{align}
	\label{p_def2}
	&J^{\alpha,+}f(n) = (\alpha-n)f(n+1),\quad J^{\alpha,-}f(n) = nf(n-1),\quad J^{\alpha,0}f(n) = (n-\tfrac{\alpha}{2})f(n),\nonumber\\
	&A^{\alpha,+}=J^{\alpha,-}-J^{\alpha,+}-2J^{\alpha,0}, 
	\quad A^{\alpha,-}=J^{\alpha,+},
	\quad A^{\alpha,0}=J^{\alpha,+}+J^{\alpha,0}.
\end{align} 
It is straightforward to see that
\begin{equation}
	[A^{\alpha,0},A^{\alpha,\pm}]=\pm A^{\alpha,\pm},\quad [A^{\alpha,-},A^{\alpha,+}]=-2A^{\alpha,0},
\end{equation}
which are the same commutation relations as in \eqref{eq: su(2) definition}. Thus, for each $\alpha\in\N$, the Lie homomorphism $\phi_\alpha\colon\,\mathfrak{su}(2)\to\mathfrak{gl}(V_\alpha)$ defined by its action on the generators $\{J^+, J^-, J^0\}$ given by 
\begin{equation}
	J^+\mapsto A^{\alpha,+},\quad J^-\mapsto A^{\alpha,-},\quad  J^0\mapsto A^{\alpha,0},
\end{equation}
is a finite-dimensional representation of $\mathfrak{su}(2)$. Similarly, we can verify that $\{J^{\alpha,+},J^{\alpha,-},J^{\alpha,0}\}$, $\alpha \in \N$, form a representation of the \emph{dual} $\mathfrak{su}(2)$-algebra (defined by the commutation relations in \eqref{eq: su(2) definition}, but with opposite signs). 

Below we introduce the notion of duality between two operators and prove a lemma that will be crucial in the proof of duality of both the single-colony and the multi-colony model. The relevance to our context of the above discussion on $\mathfrak{su}(2)$ and its dual algebra will become clear as we go along.

\begin{definition}{\bf [Operator duality]}
	\label{p_def1}
	{\rm Let $A$ and $B$ be two operators acting on functions $f\colon\,\Omega\to\R$ and $g\colon\,\hat{\Omega}\to\R$ respectively. We say that $A$ is dual to $B$ with respect to the duality function $D\colon\,\Omega\times\hat{\Omega}\to\R$, denoted by $A\overset{D}{\longrightarrow}B$, if $(AD(\cdot,y))(x)=(BD(x,\cdot))(y)$ for all $(x,y)\in\Omega\times\hat{\Omega}$.}
	\hfill$\Box$
\end{definition}

\noindent
The following lemma intertwines the $\mathfrak{su}(2)$ and its dual algebra with a duality function.

\begin{lemma}{\bf [Single-colony intertwiner]}
	\label{lem1}
	For $\alpha\in\N$, let $d_\alpha\colon\,[\alpha]\times[\alpha]\to[0,1]$ be the function defined by 
	\begin{equation}
		d_\alpha(x,n) = \frac{\binom{x}{n}}{\binom{\alpha}{n}} \mathbf{1}_{\{n\leq x\}}.
	\end{equation}
	Then the following duality relations hold:
	\begin{equation}
		\label{eqndual}
		J^{\alpha,+}\overset{d_\alpha}{\longrightarrow}A^{\alpha,+}, \quad
		J^{\alpha,-}\overset{d_\alpha}{\longrightarrow}A^{\alpha,-}, \quad
		J^{\alpha,0}\overset{d_\alpha}{\longrightarrow}A^{\alpha,0}.
	\end{equation}
\end{lemma}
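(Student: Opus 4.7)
The plan is to verify each of the three intertwining relations in \eqref{eqndual} by a direct combinatorial computation. For an operator $T$ on functions of $x$ and an operator $S$ on functions of $n$, the duality $T \overset{d_\alpha}{\longrightarrow} S$ amounts, by Definition~\ref{p_def1}, to the pointwise equality
\[
    (T \, d_\alpha(\cdot, n))(x) = (S \, d_\alpha(x, \cdot))(n), \qquad (x,n) \in [\alpha] \times [\alpha].
\]
Writing $D := d_\alpha$ for brevity, the single combinatorial fact that will power everything is the master identity
\[
    (\alpha - n)\, D(x, n+1) = (x - n)\, D(x, n), \qquad 0 \leq x, n \leq \alpha.
\]
For $n < x$ this follows from the recursions $\binom{x}{n+1}/\binom{x}{n} = (x-n)/(n+1)$ and $\binom{\alpha}{n+1}/\binom{\alpha}{n} = (\alpha-n)/(n+1)$; for $n \geq x$ both sides vanish thanks to the indicator in $D$ (and the factor $x-n$ at $n = x$).

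For the first relation $J^{\alpha,-} \overset{D}{\longrightarrow} A^{\alpha,-}$, since $A^{\alpha,-} = J^{\alpha,+}$, I will need to check that $x\, D(x-1, n) = (\alpha - n)\, D(x, n+1)$. The elementary identity $x \binom{x-1}{n} = (x-n)\binom{x}{n}$ yields $x\, D(x-1, n) = (x-n)\, D(x, n)$, which equals the right-hand side by the master identity. For $J^{\alpha,0} \overset{D}{\longrightarrow} A^{\alpha,0}$ with $A^{\alpha,0} = J^{\alpha,+} + J^{\alpha,0}$, the target reads
\[
    (x - \tfrac{\alpha}{2})\, D(x, n) = (\alpha - n)\, D(x, n+1) + (n - \tfrac{\alpha}{2})\, D(x, n),
\]
which follows immediately by using the master identity to replace $(\alpha - n)\, D(x, n+1)$ with $(x - n)\, D(x, n)$ on the right.

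The third relation $J^{\alpha,+} \overset{D}{\longrightarrow} A^{\alpha,+}$ is the longest, since expanding $A^{\alpha,+} = J^{\alpha,-} - J^{\alpha,+} - 2 J^{\alpha,0}$ produces three terms on the right. I will rewrite the left-hand side $(\alpha - x)\, D(x+1, n)$ via Pascal's rule $\binom{x+1}{n} = \binom{x}{n} + \binom{x}{n-1}$, and simplify the right-hand side by applying the master identity to convert $D(x, n+1)$ into $D(x, n)$. What remains is to compare the coefficients of $D(x,n)$ and $D(x, n-1)$; equating them reduces, after a short manipulation, to the master identity with the shift $n \mapsto n-1$, namely $(\alpha - n + 1)\, D(x,n) = (x - n + 1)\, D(x, n-1)$. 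The only delicate point is the boundary case $n = x+1$, where $D(x, n) = D(x, n+1) = 0$ but $D(x+1, n) = 1/\binom{\alpha}{x+1}$ and $D(x, n-1) = 1/\binom{\alpha}{x}$ survive; there the claim collapses to the binomial recursion $(\alpha - x)\binom{\alpha}{x} = (x+1)\binom{\alpha}{x+1}$. I do not foresee any conceptual obstacle: the entire argument is a sequence of binomial manipulations, and the only feature requiring vigilance is the indicator in the definition of $D$, which switches some terms off at the boundary of $[\alpha]\times[\alpha]$ and must be tracked to keep the bookkeeping straight.
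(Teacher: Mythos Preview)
Your proposal is correct and follows essentially the same approach as the paper: a direct verification of the three intertwining identities via elementary binomial recursions. The paper's proof merely records the three target identities and declares them to follow from ``straightforward calculations''; your version is more detailed, organizing the work around the single master identity $(\alpha-n)D(x,n+1)=(x-n)D(x,n)$, which is exactly the content of the paper's third displayed relation and (via the shift $n\mapsto n-1$) what is needed to close the $J^{\alpha,+}$ case.
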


\begin{proof}[Proof.]
	By straightforward calculations, it can be shown that $d_\alpha(x,n)$ satisfies the relations
	\begin{equation}
		\begin{aligned}
			(\alpha-x)\,d_\alpha(x+1,n)&=n\,[d_\alpha(x,n-1)-d_\alpha(x,n)]+(\alpha-n)\,[d_\alpha(x,n)-d_\alpha(x,n+1)],\\
			x\,d_\alpha(x-1,n)&=(\alpha-n)\,d_\alpha(x,n),\\
			x\,d_\alpha(x,n)&=(\alpha-n)\,d_\alpha(x,n+1)+n\,d_\alpha(x,n),
		\end{aligned}	
	\end{equation}
	from which the above dualities in \eqref{eqndual} follow immediately.
\end{proof}

\begin{remark}{\bf [Seed-bank and $\mathfrak{su}(2)$-algebra]}
	\label{rem:change-of-representation}
	{\rm The basic idea behind the algebraic approach to duality is to write the generator of a given process in terms of simple operators that form a representation of some known Lie algebra and to make an Ansatz to obtain an intertwiner of the chosen representation. The intertwiner $d_\alpha$ in the above lemma was first identified in \cite[Lemma 1]{G78} as a duality function in disguise for the classical duality between the Moran model and the block-counting process of Kingman's coalescent. Recently, in \cite{CGGR} this duality was put in the algebraic framework by deriving it from an intertwining via $d_\alpha$ of two representations of the \emph{Heisenberg algebra} $\mathscr{H}(2)$. The connection of $d_\alpha$ to the $\mathfrak{su}(2)$-algebra was also made in \cite[Section 3.2]{GKRV}, where the authors obtained a self-duality function of $2j$-SEP factorized in terms of $d_\alpha$ by considering symmetries related to the $\mathfrak{su}(2)$-algebra. The relation of our seed-bank model to the $\mathfrak{su}(2)$-algebra becomes clear once we realize that the seed-bank component in our single-colony model is an \emph{inhomogeneous} version of the $2j$-SEP on two-sites. Thus, it is natural to expect that the classical duality of Moran model can be retrieved from representations of $\mathfrak{su}(2)$-algebra as well. The above lemma indeed provides the ingredients to establish the duality of our single-colony model from representations of the $\mathfrak{su}(2)$-algebra. Although it is possible to guess the dual process of the single-colony model without going into the Lie-algebraic framework, the true usefulness of this approach lies in identifying the dual of the spatial model, where such speculation is no longer feasible.}
	\hfill$\Box$
\end{remark}

\begin{proof}[Proof of Theorem \ref{T.Sing_Dual}.]
	Recall that both $Z=(X(t),Y(t))_{t\geq 0}$ and $Z^*=(n_t,m_t)_{t\geq 0}$ live on the state space $\Omega=[N]\times[M]$. Let $D\colon\,\Omega\times\Omega\to[0,1]$ be the function defined by 
	\begin{equation}
		D\big((X,Y);(n,m)\big) = \frac{\binom{X}{n}}{\binom{N}{n}}\frac{\binom{Y}{m}}{\binom{M}{m}}
		\mathbf{1}_{\{n\leq X,m\leq Y\}}=d_N(X,n)d_M(Y,m), \qquad (X,Y),(n,m)\in\Omega.
	\end{equation}
	Let $G=G_{\text{Mor}}+G_{\text{Exc}}$ be the generator of the process $Z$, where $G_{\text{Mor}},G_{\text{Exc}}$ are as in \eqref{eqn2}--\eqref{eqn3}. Also note from \eqref{def2} that the generator $\widehat{G}$ of the dual process is given by $\widehat{G}=G_{\text{King}}+G_{\text{Exc}}$ where $G_{\text{King}}\colon\,C(\Omega)\to C(\Omega)$ is defined as
	\begin{equation}
		(G_{\text{King}}f)(n,m)=\frac{n(n-1)}{2N}[f(n-1,m)-f(n,m)],\quad(n,m)\in\Omega.
	\end{equation}
	Since $\Omega$ is countable, it is enough to show the generator criterion for duality, i.e., 
	\begin{equation}
		\label{eqndual*}
		\big(GD(\,\cdot\,;\,(n,m))\big)(X,Y)=\big(\widehat{G}D((X,Y);\,\cdot\,)\big)(n,m), \quad (X,Y),(n,m)\in\Omega.
	\end{equation}
	In our notation, \eqref{eqndual*} translates into $G\overset{D}{\longrightarrow}\widehat{G}$. It is somewhat tedious to verify \eqref{eqndual*} by direct computation. Rather, we will write down a proof with the help of the elementary operators defined in \eqref{p_def2}. This approach will also reveal the underlying change of representation of the two operators $G,\widehat{G}$ that is embedded in the duality. 
	
	Note that
	\begin{equation}
		\label{p_rep}
		\begin{aligned}
			G_{\text{King}} &=\tfrac{1}{2N}\left[(A^{N,+}_1-A^{N,-}_1+2A^{N,0}_1)A^{N,0}_1 
			+\tfrac{N}{2}(A^{N,+}_1+A^{N,-}_1-N)\right],\\
			G_{\text{Mor}} &=\tfrac{1}{2N}\left[J^{N,0}_1(J^{N,+}_1-J^{N,-}_1+2J^{N,0}_1) 
			+\tfrac{N}{2}(J^{N,+}_1+J^{N,-}_1-N)\right],\\
			G_{\text{Exc}} &=\tfrac{\lambda}{M}\left[J^{N,+}_1J^{M,-}_2 + J^{N,-}_1J^{M,+}_2 
			+ 2J^{N,0}_1J^{M,0}_2 - \tfrac{NM}{2}\right]\\
			&=\tfrac{\lambda}{M}\left[A^{N,+}_1A^{M,-}_2 + A^{N,-}_1A^{M,+}_2 
			+ 2A^{N,0}_1A^{M,0}_2 - \tfrac{NM}{2}\right],
		\end{aligned}
	\end{equation}
	where the subscripts indicate which variable of the associated function the operators act on. For example, $J^{N,+}_1$ and $J^{M,+}_2$ act on the first and second variable, respectively. So, for a function $f\colon\,[N]\times[M]\to\R$, we have $(J^{N,+}_1f)(n,m)=(J^{N,+}f(\,\cdot\,;\,m))(n)$ and $(J^{M,+}_2f)(n,m)=(J^{M,+}f(n\,;\,\cdot\,))(m)$. The equivalent version of Lemma \ref{lem1} holds for these operators with subscript as well, except that the duality function is $D$. In other words, $J^{N,+}_1\overset{D}{\longrightarrow}A^{N,+}_1$, $J^{M,+}_2\overset{D}{\longrightarrow}A^{M,+}_2$, and so on. Using these duality relations and the representations in \eqref{p_rep}, we have $G_{\text{Mor}}\overset{D}{\longrightarrow}G_{\text{King}}$ and $G_{\text{Exc}}\overset{D}{\longrightarrow}G_{\text{Exc}}$, where we use:
	
	\begin{itemize}
		\item 
		Two operators acting on different sites commute with each other.
		\item 
		For some duality function $d$ and operators $A,B,\hat{A},\hat{B}$, if $A\overset{d}{\longrightarrow}\hat{A},B\overset{d}{\longrightarrow}\hat{B}$, then, for any constants $c_1,c_2$, $ AB\overset{d}{\longrightarrow}\hat{B}\hat{A}$ and $c_1A+c_2B\overset{d}{\longrightarrow}c_1\hat{A}+c_2\hat{B}$.
	\end{itemize} 
	Since $G=G_{\text{Mor}}+G_{\text{Exc}}$ and $\widehat{G}=G_{\text{King}}+G_{\text{Exc}}$, we have $G\overset{D}{\longrightarrow} \widehat{G}$, which proves the claim.
\end{proof}


\subsection{Equilibrium}
\label{ss.eq}

\begin{proof}[Proof of Proposition \ref{prop1}.]
	For $x\in\R$ and $r\in\N$, let $(x)_r$ be the \emph{falling factorial} defined as
	\begin{equation}
		(x)_r = x(x-1)\times \cdots \times (x-r+1),
	\end{equation}
	where we put $(x)_r=1$ when $r=0$. For any $n\in\N_0$, we can write $x^n$ as
	\begin{equation}\label{p_eqn1}
		x^n=\sum_{j=0}^{n} c_{n,j}\, (x)_j,
	\end{equation}
	where the constants $c_{n,j}$ (known as the Stirling numbers of the second kind) are unique and depend only on $n$ and $j\in[n]$. Let $(n,m)\in\Omega=[N]\times[M]$ be such that $(n,m)\neq (0,0)$, and let $(n_t,m_t)_{t\geq 0}$ be the dual process in Definition \ref{def2}. It follows from \eqref{p_eqn1} and Theorem \ref{T.Sing_Dual} that
	\begin{equation}
		\label{p_eqn2}
		\begin{aligned}
			&\lim\limits_{t\to\infty}\E_{(X,Y)}[X(t)^nY(t)^m]\\ 
			&\qquad = \sum_{i=0}^{n}\sum_{j=0}^{m}c_{n,i}c_{m,j}\lim\limits_{t\to\infty}\E_{(X,Y)}[(X(t))_i(Y(t))_j]\\
			&\qquad =\sum_{i=0}^{n}\sum_{j=0}^{m}c_{n,i}c_{m,j}(N)_i(M)_j\lim\limits_{t\to\infty}\E_{(X,Y)}[D((X(t),Y(t));(i,j))]\\
			&\qquad =\sum_{i=0}^{n}\sum_{j=0}^{m}c_{n,i}c_{m,j}(N)_i(M)_j\lim\limits_{t\to\infty}\E^{(i,j)}[D((X,Y);(n_t,m_t))],
		\end{aligned}
	\end{equation}
	where $D\colon\,\Omega\times\Omega\to[0,1]$ is the duality function in Theorem \ref{T.Sing_Dual}, defined by 
	\begin{equation}
		D((X,Y);(n,m))=\tfrac{\binom{X}{n}}{\binom{N}{n}}\tfrac{\binom{Y}{m}}
		{\binom{M}{m}}\mathbf{1}_{\{n\leq X,m\leq Y\}}\equiv \tfrac{(X)_n(Y)_m}{(N)_n(M)_m},
	\end{equation}
	and the expectation in the last line of \eqref{p_eqn2} is with respect to the dual process. Let $T$ be the first time at which there is only one particle left in the dual, i.e., $ T=\inf\{t>0\colon\,n_t+m_t=1\}$. Note that, for any initial state $(i,j)\in\Omega\backslash\{(0,0)\}$, $T<\infty$ with probability 1, and the distribution of $(n_t,m_t)$ converges as $t\to\infty$ to the invariant distribution $\tfrac{N}{N+M}\delta_{(1,0)}+\tfrac{M}{N+M}\delta_{(0,1)}$. So, for any $(i,j)\in\Omega\backslash\{(0,0)\}$,
	\begin{equation}
		\label{ppre}
		\begin{aligned}
			&\lim\limits_{t\to\infty}\E^{(i,j)}[D((X,Y);(n_t,m_t))]\\
			&\qquad =\lim\limits_{t\to\infty}\E^{(i,j)}[D((X,Y);(n_t,m_t)) \mid T\leq t]\,\P^{(i,j)}(T\leq t)\\
			&\qquad\qquad +\lim\limits_{t\to\infty}\underbrace{\E^{(i,j)}[D((X,Y);(n_t,m_t)) \mid T> t]}_{\leq 1}\P^{(i,j)}(T>t)\\
			&\qquad =\lim\limits_{t\to\infty}\left[\tfrac{X}{N}\,\P^{(i,j)}(n_t=1,m_t=0)+\tfrac{Y}{M}\,\P^{(i,j)}(n_t=0,m_t=1)\right]\\
			&\qquad =\frac{X}{N}\frac{N}{N+M}+\frac{Y}{M}\frac{M}{N+M} =\frac{X+Y}{N+M},
		\end{aligned}
	\end{equation} 
	where we use that the second term after the first equality converges to 0 because $T<\infty$ with probability 1. Combining \eqref{ppre} with \eqref{p_eqn2}, we get
	\begin{equation}
		\begin{aligned}
			&\lim_{t\to\infty}\E_{(X,Y)}[X(t)^nY(t)^m]\\ 
			&\qquad =\sum_{i=0}^{n}\sum_{j=0}^{m}c_{n,i}c_{m,j}(N)_i(M)_j\lim_{t\to\infty}\E^{(i,j)}[D((X,Y);(n_t,m_t))]\\
			&\qquad =\frac{X+Y}{N+M}\left(\sum_{i=0}^{n}c_{n,i}(N)_i\right)\left(\sum_{j=0}^{m}c_{m,j}(M)_j\right)+\left(1-\tfrac{X+Y}{N+M}\right)c_{n,0}c_{m,0}\\
			&\qquad =N^nM^m\frac{X+Y}{N+M},
		\end{aligned}
	\end{equation}
	where the last equality follows from \eqref{p_eqn1} and the fact that $c_{n,0}c_{m,0}=0$ when $(n,m)\neq (0,0)$.
\end{proof}

\begin{proof}[Proof of Corollary \ref{T.Sin_Eq}.]
	Note that the distribution of a two-dimensional random vector $(Z_1,Z_2)$ taking values in $[N]\times[M]$ is determined by the mixed moments $\E[Z_1^iZ_2^j]$, $i,j\in[N]\times[M]$. For $i\in I=[NM]$, let $p_i=\P((Z_1,Z_2)=f^{-1}(i))$, where $f\colon\,[N]\times[M]\to I$ is a bijection. For $i\in I$, let $c_i=\E[Z_1^xZ_2^y]$, where $(x,y)=f^{-1}(i)$. We can write $\vec{c}=A\vec{p}$, where $\vec{p}=(p_i)_{i\in I}, \vec{c}=(c_i)_{i\in I}$ and $A$ is an invertible $(N+1)(M+1)\times(N+1)(M+1)$ matrix. Hence, $\vec{p}=A^{-1}\vec{c}$ is uniquely determined by the mixed moments, and convergence of the mixed moments of $(X(t),Y(t))$ as shown in Proposition \ref{prop1} is enough to conclude that $(X(t),Y(t))$ converges in distribution as $t\to\infty$ to a random vector $(X_\infty,Y_\infty)$ taking values in $[N]\times[M]$. The distribution of $(X_\infty,Y_\infty)$ is also uniquely determined, and is given by $\tfrac{X+Y}{N+M}\delta_{(N,M)}+(1-\tfrac{X+Y}{N+M})\delta_{(0,0)}$.
\end{proof}


\section{Proofs: duality and well-posedness for the multi-colony model}
\label{s.wellposedness}

In Section~\ref{start}, we give the proof of Lemma~\ref{lem:unique-dual}. In Section~\ref{ss.duality}, we introduce equivalent versions for the multi-colony setting of the operators defined in \eqref{p_def2} for the single-colony setting, and use these to prove Theorem \ref{T.Mul_Dual} and Corollary \ref{C.Mul_Dual}. In Section~\ref{ss.wp} we prove Proposition \ref{prop2}, Proposition \ref{P.Mul_Exi} and Theorem \ref{T.Mul_Well}.


\subsection{Proof of Lemma~\ref{lem:unique-dual}}
\label{start}

\begin{proof}	
	Note that the rate-matrix is nothing but the dual generator $L_\text{dual}$ obtained from the rates specified in \eqref{eqn12}. The action of $L_\text{dual}$ on a function $f\colon\,\mathcal{X}^*\to\R$ is given by
	\begin{equation}
		\label{eq:dual_generator}
		\begin{aligned}
			(L_\text{dual}f)(\xi)
			&=\sum_{i\in\Z^d}\left[\tfrac{n_i(n_i-1)}{2N_i}+n_i\sum_{\overset{j\in\Z^d,}{j\neq i}}a(i,j)
			\tfrac{n_j}{N_j}\right]\big[f(\xi-\vec{\delta}_{i,A})-f(\xi)\big]\\
			&+\sum_{i\in\Z^d}\lambda\,n_i\tfrac{(M_i-m_i)}{M_i}
			\big[f(\xi-\vec{\delta}_{i,A}+\vec{\delta}_{i,D})-f(\xi)\big]\\
			&+\sum_{i\in\Z^d}\lambda(N_i-n_i)\tfrac{\,m_i}{M_i}
			\big[f(\xi+\vec{\delta}_{i,A}-\vec{\delta}_{i,D})-f(\xi)\big]\\
			&+\sum_{i\in\Z^d}\sum_{\overset{j\in\Z^d}{j\neq i}}a(i,j)n_i\tfrac{N_j-n_j}{N_j}
			\big[f(\xi-\vec{\delta}_{i,A}+\vec{\delta}_{j,A})-f(\xi)\big],
		\end{aligned}
	\end{equation}
	where $\xi=(n_i,m_i)_{i\in\Z^d}\in\mathcal{X}^*$ and the configurations $\vec{\delta}_{i,A},\vec{\delta}_{i,D}\in\mathcal{X}^*\subset\mathcal{X}$ are as in \eqref{kron_del}. It is enough to show that $L_\text{dual}$ satisfies the well-known Foster-Lyapunov criterion for stability (see for e.g.\ \cite[Theorem 2.1]{MT93} or \cite[Theorem (1.11)]{Chen91} for Markov processes on countable state spaces), i.e., 
	\begin{equation}
		(L_\text{dual}V)(\xi)\leq p V(\xi)\quad\forall\xi\in\mathcal{X}^*,
	\end{equation}
	for some $p>0$ with $V\colon\,\mathcal{X}^*\to(0,\infty)$ a function such that there exist $(E_k)_{k\in\N}$ with $E_k\uparrow\mathcal{X}^*$ and $\inf_{x\not\in E_k}V(x)\to\infty$ as $k\to\infty$.
	
	Let us define the function $V\,:\,\mathcal{X}^*\to(0,\infty)$ as
	\begin{equation}
		\label{eq:lyapunov-definition}
		V((n_i,m_i)_{i\in\Z^d}):=1+\sum_{i\in\Z^d}(n_i+m_i),\quad(n_i,m_i)_{i\in\Z^d}\in\mathcal{X}^*,
	\end{equation}
	and, for $k\in\N,$ set 
	\begin{equation}
		E_k:=\Big\lbrace(n_i,m_i)_{i\in\Z^d}\in\mathcal{X}^*\,:\,\sum_{i\in\Z^d}n_i+m_i\leq k\Big\rbrace.
	\end{equation} 
	Since $\mathcal{X}^*$ contains configurations with finitely many particles, $V$ is well-defined. It is straightforward to see that
	\begin{equation}
		E_k\uparrow \mathcal{X}^*,\quad\lim\limits_{k\to\infty}\inf_{x\not\in E_k}V(x)=\infty.
	\end{equation}
	Let $\xi=(n_i,m_i)_{i\in\Z^d}\in\mathcal{X}^*$ be arbitrary. Note that, for any $i,j\in\Z^d$ with $i\neq j$,
	\begin{equation}
		\begin{aligned}
			&[V(\xi-\vec{\delta}_{i,A})-V(\xi)] = -\mathbf{1}_{\{n_i\geq 1\}},\\
			&[V(\xi+\vec{\delta}_{i,A}-\vec{\delta}_{i,D})-V(\xi)](N_i-n_i)m_i=0,\\
		\end{aligned}
		\quad
		\begin{aligned}
			&[V(\xi-\vec{\delta}_{i,A}+\vec{\delta}_{i,D})-V(\xi)]\,n_i(M_i-m_i)=0,\\
			&[V(\xi-\vec{\delta}_{i,A}+\vec{\delta}_{j,A})-V(\xi)]\,n_i(N_j-n_j)= 0
		\end{aligned}
	\end{equation}
	and so by using \eqref{eq:dual_generator} we obtain
	\begin{equation}
		\begin{aligned}
			|(L_\text{dual}V)(\xi)|&\leq\sum_{i\in\Z^d}\left[\tfrac{n_i(n_i-1)}{2N_i}+n_i\sum_{\overset{j\in\Z^d,}{j\neq i}}a(i,j)
			\tfrac{n_j}{N_j}\right]|V(\xi-\vec{\delta}_{i,A})-V(\xi)|\\
			&\leq\sum_{i\in\Z^d}\left[\tfrac{n_i}{2}+n_i\sum_{j\in\Z^d}a(i,j)\right]
			\leq \max\{1,c\}\sum_{i\in\Z^d}n_i \leq \max\{1,c\}V(\xi),
		\end{aligned}
	\end{equation}
	where $c=\sum_{i\in\Z^d}a(0,i)<\infty$. Hence, setting $p:=\max\{1,c\}>0$, we have that 
	\begin{equation}
		(L_\text{dual}V)(\xi)\leq |(L_\text{dual}V)(\xi)|\leq p\,V(\xi),
	\end{equation}
	which proves our the claim.
\end{proof}

\subsection{Duality}
\label{ss.duality}


\subsubsection{Generators and intertwiners}

Let $f\in C(\mathcal{X})$ and $\eta=(X_i,Y_i)_{i\in\Z^d}\in\mathcal{X}$, and let $\vec{\delta}_{i,A},\vec{\delta}_{i,D}$ be as in \eqref{kron_del}. Define the action of the multi-colony operators as in Table~\ref{tab:multi_oper}. 

\begin{table}[htbp]
	\renewcommand{\arraystretch}{1.5}
	\begin{tabular}{| c | c |}
		\hline
		Operators acting on variable $X_i,i\in\Z^d$ & Operators acting on variable $Y_i,i\in\Z^d$ \\
		\hline
		$ J^{N_i,+}_{i,A}f(\eta) = (N_i-X_i)f(\eta+\vec{\delta}_{i,A}) $  & $ J^{M_i,+}_{i,D}f(\eta) = (M_i-Y_i)f(\eta+\vec{\delta}_{i,D})$   \\
		\hline
		$ J^{N_i,-}_{i,A}f(\eta) = X_i f(\eta-\vec{\delta}_{i,A}) $  & $ J^{M_i,-}_{i,D}f(\eta) = Y_i f(\eta-\vec{\delta}_{i,D}) $   \\
		\hline
		$ J^{N_i,0}_{i,A}f(\eta) = (X_i-\tfrac{N_i}{2})f(\eta) $  & $ J^{M_i,0}_{i,D}f(\eta) = (Y_i-\tfrac{M_i}{2})f(\eta) $\\
		\hline
		$A^{N_i,+}_{i,A}=J^{N_i,-}_{i,A}-J^{N_i,+}_{i,A}-2J^{N_i,0}_{i,A}$ & $A^{M_i,+}_{i,D}=J^{M_i,-}_{i,D}-J^{M_i,+}_{i,D}-2J^{M_i,0}_{i,D}$\\
		\hline
		$A^{N_i,-}_{i,A}=J^{N_i,+}_{i,A}$ & $A^{M_i,-}_{i,D}=J^{M_i,+}_{i,D}$\\
		\hline
		$A^{N_i,0}_{i,A}=J^{N_i,+}_{i,A}+J^{N_i,0}_{i,A}$ & $A^{M_i,0}_{i,D}=J^{M_i,+}_{i,D}+J^{M_i,0}_{i,D}$\\
		\hline
	\end{tabular}
	\caption{Action of operators on $f\in C(\mathcal{X})$.}
	\label{tab:multi_oper}
\end{table}

\noindent
The same duality relations as in Lemma \ref{lem1} hold for these operators as well. The only difference is that the duality function becomes the site-wise product of the duality functions appearing in the single-colony model. 

\begin{lemma}{\bf [Multi-colony intertwiner]}
	\label{lem2}
	Let $D\colon\,\mathcal{X}\times\mathcal{X}^*\to[0,1]$ be the function defined by
	\begin{equation}
		\label{p_def3}
		D((X_k,Y_k)_{k\in\Z^d};(n_k,m_k)_{k\in\Z^d})=\prod_{i\in\Z^d}\frac{\binom{X_i}{n_i}}{\binom{N_i}{n_i}}\frac{\binom{Y_i}{m_i}}{\binom{M_i}{m_i}}\mathbf{1}_{\{n_i\leq X_i,m_i\leq Y_i\}},
	\end{equation} 
	where $(X_k,Y_k)_{k\in\Z^d}\in\mathcal{X}$ and $(n_k,m_k)_{k\in\Z^d}\in\mathcal{X}^*$. Then, for every $i\in\Z^d$ and $s\in\{0,+,-\}$,
	\begin{equation}
		J^{N_i,s}_{i,A}\overset{D}{\longrightarrow}A^{N_i,s}_{i,A},\quad 
		J^{M_i,s}_{i,D}\overset{D}{\longrightarrow}A^{M_i,s}_{i,D}.
	\end{equation}
\end{lemma}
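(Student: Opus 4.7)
The plan is to reduce the multi-colony intertwining relations to the single-colony versions already proved in Lemma~\ref{lem1}, exploiting the product structure of the duality function $D$.

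First, I would observe that the duality function factorizes site-wise as
\[
D(\eta;\xi) = \prod_{k\in\Z^d} d_{N_k}(X_k,n_k)\, d_{M_k}(Y_k,m_k),
\]
where $d_\alpha(x,n)=\binom{x}{n}/\binom{\alpha}{n}\mathbf{1}_{\{n\leq x\}}$ is the single-colony intertwiner of Lemma~\ref{lem1}. Because $\xi\in\mathcal{X}^*$ has only finitely many non-zero entries, all but finitely many factors equal $1$, so the product is well-defined pointwise and there is no convergence issue.

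Next, fix $i\in\Z^d$ and $s\in\{0,+,-\}$, and consider the claim $J^{N_i,s}_{i,A}\overset{D}{\longrightarrow}A^{N_i,s}_{i,A}$. By inspection of Table~\ref{tab:multi_oper}, the operator $J^{N_i,s}_{i,A}$ acts only on the $X_i$-coordinate of its argument. Applied to $D(\,\cdot\,;\xi)$ with $\xi$ fixed, every factor other than $d_{N_i}(X_i,n_i)$ is $X_i$-independent and pulls out as a multiplicative constant. Lemma~\ref{lem1} then gives the single-site intertwining
\[
\bigl(J^{N_i,s}d_{N_i}(\,\cdot\,,n_i)\bigr)(X_i) = \bigl(A^{N_i,s}d_{N_i}(X_i,\,\cdot\,)\bigr)(n_i).
\]
Since the dual-side operator $A^{N_i,s}_{i,A}$ likewise affects only the $n_i$-coordinate while every other factor of $D(\eta;\,\cdot\,)$ is $n_i$-independent, reinserting the constant factors yields the required multi-colony identity $(J^{N_i,s}_{i,A}D(\,\cdot\,;\xi))(\eta)=(A^{N_i,s}_{i,A}D(\eta;\,\cdot\,))(\xi)$.

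The second family of intertwinings $J^{M_i,s}_{i,D}\overset{D}{\longrightarrow}A^{M_i,s}_{i,D}$ follows by an identical argument with active and dormant coordinates interchanged ($X_i\leftrightarrow Y_i$, $N_i\leftrightarrow M_i$, $A$-subscripts replaced by $D$-subscripts). I do not anticipate any real obstacle: the only substantive inputs are the factorization of $D$ and the fact that each operator touches exactly one coordinate, after which the statement reduces cleanly to Lemma~\ref{lem1}.
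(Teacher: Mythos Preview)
Your proposal is correct and matches the paper's approach: the paper's proof is simply the one-line remark that it is ``exactly the same as the proof of Lemma~\ref{lem1},'' which is precisely your reduction via the product structure of $D$ and the fact that each operator acts on a single coordinate. Your write-up is in fact more explicit than the paper's.
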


\begin{proof}[Proof.]
	The proof is exactly the same as the proof of Lemma \ref{lem1}.
\end{proof}

\begin{proposition}{\bf [Generator criterion]}
	\label{prop3}
	Let $L$ be the generator defined in \eqref{eqn8}, and $\hat{L}$ the generator of the dual process defined in Definition {\rm \ref{def2}}. Furthermore, let $D\colon\,\mathcal{X}\times\mathcal{X}^*\to[0,1]$ be the function defined in Lemma {\rm \ref{lem2}}. Then $L\overset{D}{\longrightarrow}\hat{L}$.
\end{proposition}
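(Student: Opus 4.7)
The plan is to mimic the algebraic strategy used to prove Theorem \ref{T.Sing_Dual}, now employing the site-indexed $J$- and $A$-operators from Table \ref{tab:multi_oper} and the intertwining identities of Lemma \ref{lem2}. Decompose $L = L_{\text{Mig}} + L_{\text{Res}} + L_{\text{Exc}}$ and split the dual generator $\hat{L}$ into its corresponding pieces: the coalescence-plus-hopping part (first and fourth lines of \eqref{eqn12}) and the seed-bank exchange part (second and third lines of \eqref{eqn12}). Since duality is preserved under linear combinations, it suffices to establish $L_{\text{Res}} \overset{D}{\longrightarrow} \hat{L}_{\text{Res}}$, $L_{\text{Exc}} \overset{D}{\longrightarrow} \hat{L}_{\text{Exc}}$, and $L_{\text{Mig}} \overset{D}{\longrightarrow} \hat{L}_{\text{Mig}}$ separately.

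The same-colony resampling and exchange terms are direct site-by-site analogues of the single-colony generators $G_{\text{Mor}}$ and $G_{\text{Exc}}$ in \eqref{p_rep}, with every $J^{N,\pm,0}$ replaced by its site-indexed version $J^{N_i,\pm,0}_{i,A}$ and every $J^{M,\pm,0}$ by $J^{M_i,\pm,0}_{i,D}$. Lemma \ref{lem2}, combined with the elementary rules that (i) duality is linear, (ii) $AB \overset{D}{\longrightarrow} \hat{B}\hat{A}$ (reversal of order under composition), and (iii) operators acting on distinct coordinates commute, immediately yields that $L_{\text{Res}}$ dualizes to the diagonal $j = i$ contribution to the first line of \eqref{eqn12} (using $a(i,i) = \tfrac{1}{2}$), and that $L_{\text{Exc}}$ is self-dual, producing the second and third lines of \eqref{eqn12}.

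The migration part $L_{\text{Mig}} = \sum_{i \neq j} (L_{\text{Mig}})_{ij}$ is the genuinely new ingredient. For $i \neq j$, the key operator identity I would verify is
\begin{equation*}
(L_{\text{Mig}})_{ij} = \tfrac{a(i,j)}{N_j} \bigl[ J^{N_j, 0}_{j,A} \bigl( J^{N_i, +}_{i,A} - J^{N_i, -}_{i,A} + 2 J^{N_i, 0}_{i,A} \bigr) + \tfrac{N_j}{2} \bigl( J^{N_i, +}_{i,A} + J^{N_i, -}_{i,A} - N_i \bigr) \bigr],
\end{equation*}
which is the natural two-site analogue of the single-site Moran representation in \eqref{p_rep}. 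One obtains it by writing $X_j = \tfrac{N_j}{2} + J^{N_j,0}_{j,A}$ and $N_j - X_j = \tfrac{N_j}{2} - J^{N_j,0}_{j,A}$ as pure multiplications (which commute with any shift at site $i$, since $i \neq j$), substituting into \eqref{eqn9}, and matching the coefficients of $f(\eta \pm \vec{\delta}_{i,A})$ and $f(\eta)$ term by term, exactly as in the calculation leading to \eqref{p_rep}. Dualizing this representation via Lemma \ref{lem2} and rule (ii) gives
\begin{equation*}
\tfrac{a(i,j)}{N_j} \bigl[ \bigl( A^{N_i, +}_{i,A} - A^{N_i, -}_{i,A} + 2 A^{N_i, 0}_{i,A} \bigr) A^{N_j, 0}_{j,A} + \tfrac{N_j}{2} \bigl( A^{N_i, +}_{i,A} + A^{N_i, -}_{i,A} - N_i \bigr) \bigr],
\end{equation*}
which after expansion through Table \ref{tab:multi_oper} splits into precisely the off-diagonal ($j \neq i$) part of the first line of \eqref{eqn12} (coalescence at rate $\tfrac{a(i,j) n_i n_j}{N_j}$) and the fourth line of \eqref{eqn12} (hopping at rate $\tfrac{a(i,j) n_i (N_j - n_j)}{N_j}$).

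The main obstacle is the migration piece: verifying the two-site identity above by direct expansion and then carefully tracking the order-reversal $J_i J_j \overset{D}{\longrightarrow} A_j A_i$ so that the dual splits exactly into the coalescence and hopping rates of \eqref{eqn12} without stray remainders. A preliminary observation ensures everything is pointwise well-defined: for any $\xi = (n_k,m_k)_{k \in \Z^d} \in \mathcal{X}^*$ the function $D(\,\cdot\,;\xi)$ defined in \eqref{p_def3} is actually local, because $n_i = m_i = 0$ for all but finitely many $i$ and $d_\alpha(x,0) \equiv 1$, so $D(\,\cdot\,;\xi) \in \mathcal{D}$ and the infinite sums defining $LD(\,\cdot\,;\xi)$ and $\hat{L}D(\eta\,;\,\cdot\,)$ both collapse to finite ones, so that the operator identity $L \overset{D}{\longrightarrow} \hat{L}$ really does reduce to the site-by-site algebraic identities handled above.
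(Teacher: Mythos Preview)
Your proof is correct and follows essentially the same algebraic route as the paper's: both decompose $L$ into its three pieces, express each in terms of the site-indexed $J$-operators (with the same two-site representation for $L_{\text{Mig}}$, up to the harmless commutation of operators acting on distinct sites), and then invoke Lemma~\ref{lem2} together with the composition/linearity rules to obtain the $A$-operator representations of the dual pieces. Your closing remark that $D(\,\cdot\,;\xi)\in\mathcal{D}$ because $\xi\in\mathcal{X}^*$ has finite support is a useful explicit observation that the paper leaves implicit.
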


\begin{proof}[Proof.]
	Recall that $L=L_{\text{Mig}}+L_{\text{Res}}+L_{\text{Exc}}$, where $L_{\text{Mig}},L_{\text{Res}},L_{\text{Ex}}$ are defined in \eqref{eqn9}--\eqref{eqn11}. In terms of the operators defined earlier, these have the following representations:
	\begin{equation}
		\begin{aligned}
			\label{pm_rep1}
			L_{\text{Mig}}&=\sum_{i\in\Z^d}\sum_{\overset{j\in\Z^d}{j\neq i}} 
			\frac{a(i,j)}{N_j}\left[\left(J^{N_i,+}_{i,A}-J^{N_i,-}_{i,A}
			+2J^{N_i,0}_{i,A}\right)J^{N_j,0}_{j,A}+\tfrac{N_j}{2}\left(J^{N_i,+}_{i,A}+J^{N_i,-}_{i,A}-N_i\right)\right],\\[-0.05cm]
			L_{\text{Res}}&=\sum_{i\in\Z^d}\frac{1}{2N_i}\left[J^{N_i,0}_{i,A}\left(J^{N_i,+}_{i,A}-J^{N_i,-}_{i,A}
			+2J^{N_i,0}_{i,A}\right) +\tfrac{N_i}{2}\left(J^{N_i,+}_{i,A}+J^{N_i,-}_{i,A}-N_i\right)\right],\\
			L_{\text{Exc}}&=\sum_{i\in\Z^d}\frac{\lambda}{M_i}\left[J^{N_i,+}_{i,A}J^{M_i,-}_{i,D} 
			+ J^{N_i,-}_{i,A}J^{M_i,+}_{i,D} + 2J^{N_i,0}_{i,A}J^{M_i,0}_{i,D} - \tfrac{N_iM_i}{2}\right]\\
			&=\sum_{i\in\Z^d}\frac{\lambda}{M_i}\left[A^{N_i,+}_{i,A}A^{M_i,-}_{i,D} 
			+ A^{N_i,-}_{i,A}A^{M_i,+}_{i,D} + 2A^{N_i,0}_{i,A}A^{M_i,0}_{i,D} - \tfrac{N_iM_i}{2}\right].
		\end{aligned}
	\end{equation}
	Similarly, the generator $\hat{L}$ of the dual process defined in Definition \ref{def2} acting on $f\in C(\mathcal{X}^*)$ is given by $\hat{L}=\hat{L}_{\text{Mig}}+L_{\text{Exc}}+L_\text{King}$,
	where
	\begin{equation}
		\begin{aligned}
			\hat{L}_{\text{Mig}}f(\xi)&=\sum_{i\in\Z^d}\sum_{\overset{j\in\Z^d}{j\neq i}}
			\frac{a(i,j)}{N_j}\Big\{n_i(N_j-n_j)[f(\xi-\vec{\delta}_{i,A}+\vec{\delta}_{j,A})-f(\xi)]+n_in_j[f(\xi-\vec{\delta}_{i,A})-f(\xi)]\Big\},\\
			L_\text{King}f(\xi)&=\sum_{i\in\Z^d}\frac{n_i(n_i-1)}{2N_i}[f(\xi-\vec{\delta}_{i,A})+f(\xi+\vec{\delta}_{i,A})-2f(\xi)],
		\end{aligned}
	\end{equation}
	for $\xi=(n_i,m_i)_{i\in\Z^d}\in\mathcal{X}^*$. The representations of these operators are
	\begin{align}
		\label{pm_rep2}
		\hat{L}_\text{Mig}&=\sum_{i\in\Z^d}\sum_{\overset{j\in\Z^d}{j\neq i}} 
		\frac{a(i,j)}{N_j}\left[A^{N_j,0}_{j,A}\left(A^{N_i,+}_{i,A}-A^{N_i,-}_{i,A}+2A^{N_i,0}_{i,A}\right)
		+\tfrac{N_j}{2}\left(A^{N_i,+}_{i,A}+A^{N_i,-}_{i,A}-N_i\right)\right],\nonumber\\
		L_\text{King}&=\sum_{i\in\Z^d}\frac{1}{2N_i}\left[\left(A^{N_i,+}_{i,A}-A^{N_i,-}_{i,A}
		+2A^{N_i,0}_{i,A}\right)A^{N_i,0}_{i,A} +\tfrac{N_i}{2}\left(A^{N_i,+}_{i,A}+A^{N_i,-}_{i,A}-N_i\right)\right].
	\end{align}
	From Lemma \ref{lem2} and the representations in \eqref{pm_rep1}--\eqref{pm_rep2}, we see that $L_\text{Mig}\overset{D}{\longrightarrow}\hat{L}_\text{Mig}, \,L_\text{Res}\overset{D}{\longrightarrow}L_\text{King}$ and $L_\text{Ex}\overset{D}{\longrightarrow}L_\text{Ex}$, which yields $L\overset{D}{\longrightarrow}\hat{L}$.
\end{proof}

As shown in \cite[Proposition 1.2]{JK2014}, the generator criterion is enough to get the required duality relation of Theorem \ref{T.Mul_Dual} when both $L$ and $\hat{L}$ are Markov generators of Feller processes. Since it is not a priori clear whether $L$ (or its extension) is a Markov generator, we need to use \cite[Theorem 4.11, Corollary 4.13]{EK}.


\subsubsection{Proof of duality relation}

\begin{proof}[Proof of Theorem \ref{T.Mul_Dual}.]
	We combine \cite[Theorem 4.11 and Corollary 4.13]{EK} and reinterpret these in our context:
	\begin{itemize}
		\item[$\bullet$]
		Let $(\eta_t)_{t\geq 0}$ and $(\xi_t)_{t\geq 0}$ be two independent processes on $E_1$ and $E_2$ that are solutions to the martingale problem for $(L_1,\mathcal{D}_1)$ and $(L_2,\mathcal{D}_2)$ with initial states $x\in E_1$ and $y\in E_2$. Assume that $D\colon\,E_1\times E_2\to\mathbb{R}$ is such that $D(\,\cdot\,;\,\xi)\in\mathcal{D}_1$ for any $\xi\in E_2$ and $D(\eta\,;\,\cdot)\in\mathcal{D}_2$ for any $\eta\in E_1$. Also assume that for each $T>0$ there exists an integrable random variable $U_T$ such that
		\begin{equation}
			\label{p_eqn3}
			\sup_{0\leq s,t\leq T} | D(\eta_t;\xi_s)| \leq U_T,
			\,\,
			\sup_{0\leq s,t\leq T} |(L_1D(\,\cdot\,;\xi_s))(\eta_t)| \leq U_T,
			\,\,
			\sup_{0\leq s,t\leq T} |(L_2D(\eta_t;\,\cdot\,))(\xi_s)| \leq U_T.
		\end{equation}
		If $(L_1D(\,\cdot\,;y))(x)=(L_2D(x\,;\,\cdot\,))(y)$, then $\E_x[D(\eta_t;y)]=\E^y[D(x,\xi_t)]$ for all $t\geq 0$.
	\end{itemize}
	
	To apply the above, pick $E_1=\mathcal{X}$, $E_2=\mathcal{X}^*$, $L_1=L$, $L_2=L_\text{dual}$, $\mathcal{D}_1=\mathcal{D}$, $\mathcal{D}_2=C(\mathcal{X}^*)$, where $L_\text{dual}$ is the generator of the dual process $Z^*$ and set $D$ to be the function defined in Lemma \ref{lem2}. Note that, since $\mathcal{D}$ contains local functions only, $D(\,\cdot\,;\xi)\in\mathcal{D}$ for any $\xi\in\mathcal{X}^*$ and, since $\mathcal{X}^*$ is countable, $D(\eta\,;\,\cdot\,)\in C(\mathcal{X}^*)$ for any $\eta\in\mathcal{X}$. Fix $x=(X_i,Y_i)_{i\in\Z^d}\in\mathcal{X}$ and $y=(n_i,m_i)_{i\in\Z^d}\in\mathcal{X}^*$. Note that, by Proposition \ref{prop3}, $(L_1D(\,\cdot\,;y))(x)=(L_2D(x\,;\,\cdot\,))(y)$. Pick $(\xi_t)_{t\geq 0}$ to be the process $Z^*$ with initial state $y$. Note that $(\xi_t)_{t\geq 0}$ is the unique solution to the martingale problem for $(L_\text{dual},C(\mathcal{X}^*))$ with initial state $y$. Let $(\eta_t)_{t\geq0}$ denote any solution $Z$ to the martingale problem for $(L,\mathcal{D})$ with initial state $x$. Fix $T>0$ and note that, for $0\leq s,t<T$,
	\begin{equation}
		\label{p_eqn4}
		\begin{aligned}
			(L_1D(\,\cdot\,;\xi_s))(\eta_t)=&\sum_{i\in\Z^d}X_i(t)\left[\,\sum_{j\in\Z^d}
			a(i,j)\tfrac{N_j-X_j(t)}{N_j}\right]\big[D(\eta_t-\vec{\delta}_{i,A};\xi_s)-D(\eta_t;\xi_s)\big]\\
			&+\sum_{i\in\Z^d}(N_i-X_i(t))\left[\,\sum_{j\in\Z^d}a(i,j)\tfrac{X_j(t)}{N_j}\right]
			\big[D(\eta_t+\vec{\delta}_{i,A};\xi_s)-D(\eta_t;\xi_s)\big]\\
			&+\sum_{i\in\Z^d}\lambda X_i(t) \tfrac{M_i-Y_i(t)}{M_i}
			\big[D(\eta_t-\vec{\delta}_{i,A}+\vec{\delta}_{i,D};\xi_s)-D(\eta_t;\xi_s)\big]\\
			&+\sum_{i\in\Z^d}\lambda (N_i-X_i(t))\tfrac{Y_i(t)}{M_i} 
			\big[D(\eta_t+\vec{\delta}_{i,A}-\vec{\delta}_{i,D};\xi_s)-D(\eta_t;\xi_s)\big]
		\end{aligned}
	\end{equation}
	and
	\begin{equation}
		\label{p_eqn5}
		\begin{aligned}
			(L_2D(\eta_t\,;\,\cdot\,))(\xi_s)
			&=\sum_{i\in\Z^d}\left[\tfrac{n_i(s)(n_i(s)-1)}{2N_i}+n_i(s)\sum_{\overset{j\in\Z^d,}{j\neq i}}a(i,j)
			\tfrac{n_j(s)}{N_j}\right]\big[D(\eta_t;\xi_s-\vec{\delta}_{i,A})-D(\eta_t;\xi_s)\big]\\
			&+\sum_{i\in\Z^d}\lambda\,n_i(s)\tfrac{M_i-m_i(s)}{M_i}
			\big[D(\eta_t;\xi_s-\vec{\delta}_{i,A}+\vec{\delta}_{i,D})-D(\eta_t;\xi_s)\big]\\
			&+\sum_{i\in\Z^d}\lambda(N_i-n_i(s))\tfrac{\,m_i(s)}{M_i}
			\big[D(\eta_t;\xi_s+\vec{\delta}_{i,A}-\vec{\delta}_{i,D})-D(\eta_t;\xi_s)\big]\\
			&+\sum_{i\in\Z^d}\sum_{\overset{j\in\Z^d}{j\neq i}}a(i,j)n_i(s)\tfrac{N_j-n_j(s)}{N_j}
			\big[D(\eta_t;\xi_s-\vec{\delta}_{i,A}+\vec{\delta}_{j,A})-D(\eta_t;\xi_s)\big].
		\end{aligned}
	\end{equation}
	The random variable $\Gamma(t)$ defined in Theorem \ref{T.Mul_Dual} is stochastically increasing in time $t$, and if we change the configuration $\eta_t$ outside the box $[0,\Gamma(s)]^d\cap\Z^d$, then the value of $D(\eta_t;\xi_s)$ does not change. Consequently, all the summands in \eqref{p_eqn4} for $\|i\|>\Gamma(s), i\in\Z^d$, are 0, and since $\Gamma(s)\leq \Gamma(T)$ we have the estimate
	\begin{equation}
		\label{p_eqn8}
		|(L_1D(\,\cdot\,;\xi_s))(\eta_t)|\leq 2(c+\lambda)\sum_{\overset{i\in\Z^d}{\|i\| 
				\leq \Gamma(s)}} N_i \ \leq \ 2(c+\lambda)\sum_{\overset{i\in\Z^d}{\|i\|\leq \Gamma(T)}} N_i,
	\end{equation}
	where $c=\sum_{i\in\Z^d}a(0,i)$. Now, by Definition \ref{def2}, the process $(\xi_t)_{t\geq 0}$ is the interacting particle system with coalescence in which the total number of particles can only decrease in time, and so $\sum_{i\in\Z^d}(n_i(s)+m_i(s))\leq N$, where $N=\sum_{i\in\Z^d} (n_i+m_i)$. Also, since $s\leq T$, for $i \in \Z^d$ with $\|i\|>\Gamma(T)$ we have $n_i(s)=m_i(s)=0$. Hence, from \eqref{p_eqn5} we get
	\begin{equation}
		\label{p_eqn9}
		|(L_2D(\eta_t\,;\,\cdot\,))(\xi_s)|\leq 2(c+\lambda)N+2\lambda\sum_{\overset{i\in\Z^d}{\|i\|\leq \Gamma(T)}} N_i.
	\end{equation}
	Define the random variable $U_T$ by
	\begin{equation}
		U_T=1+2(c+\lambda)N+2(c+\lambda)\sum_{\overset{i\in\Z^d}{\|i\|\leq \Gamma(T)}}N_i.
	\end{equation}
	Then, combining \eqref{p_eqn8}--\eqref{p_eqn9} with the fact that the function $D$ takes values in $[0,1]$, we see that $U_T$ satisfies all the conditions in \eqref{p_eqn3}, while assumption \eqref{assumpt2} in Theorem \ref{T.Mul_Dual} ensures the integrability of $U_T$. 
\end{proof}


\subsubsection{Proof of duality criterion}

\begin{proof}[Proof of Corollary \ref{C.Mul_Dual}.]
	Let $\xi=(n_i,m_i)_{i\in\Z^d}\in\mathcal{X}^*$ and $T>0$ be fixed. By Theorem \ref{T.Mul_Dual}, it suffices to show that, for any $(N_i)_{i\in\Z^d}\in\mathcal{N}$,
	\begin{equation}
		\label{eqn_req}
		\sum_{i\in\Z^d}N_i\,\P^\xi(\Gamma(T) \geq \|i\|)<\infty,
	\end{equation}
	where $\P^\xi$ is the law of the dual process $Z^*$ started from initial state $\xi$. Let $n=\sum_{i\in\Z^d}(n_i+m_i)$ be the initial number of particles, and let $N(t)$ be the total number of migration events within the time interval $[0,t]$. We will construct a Poisson process $N^*$ via coupling such that $N(t)\leq N^*(t)$ for all $t\geq 0$ with probability 1. For this purpose, let us consider $n$ independent particles performing a random walk on $\Z^d$ according to the migration kernel $a(\cdot,\cdot)$. For each $k=1,\ldots,n$, let $\xi_k(t)$ and $\xi^*_k(t)$ denote the position of the $k$-th dependent and independent particle at time $t$, respectively. We take $\xi_k(0)=\xi_k^*(0)$ and couple each $k$-th interacting particle with the $k$-th independent particle as below:
	\begin{itemize}
		\item 
		If the independent particle makes a jump from site $\xi_k^*(t)$ to $j^*\in\Z^d$, then the dependent particle jumps from $\xi_k(t)$ to $j=\xi_k(t)+(j^*-\xi_k^*(t))$ with probability $p_k(t)$ given by
		\begin{equation}
			p_k(t)=
			\begin{cases}
				1-\tfrac{n_j(t)}{N_j} &\text{ if the dependent particle is in an \emph{active} and \emph{non-coalesced} state,}\\
				0 &\text{ otherwise,}
			\end{cases}
		\end{equation}
		where $n_j(t)$ is the number of active particles at site $j$.
		\item 
		The dependent particle does the other transitions (waking up, becoming dormant and coalescence) independently of the previous migration events, with the prescribed rates defined in Definition \ref{def2}. 
	\end{itemize}
	
	Note that, since the migration kernel is translation invariant, under the above coupling the effective rate at which a dependent particle migrates from site $i$ to $j$ is $n_i a(i,j) (1-\tfrac{n_j}{N_j})$ when there are $n_i$ and $n_j$ active particles at site $i$ and $j$, respectively. Also, if $N_k(t)$ and $N_k^*(t)$ are the number of migration steps made within the time interval $[0,t]$ by the $k$-th dependent and independent particle, respectively, then under this coupling $N_k(t)\leq N^*_k(t)$ with probability 1. Set $N^*(\cdot)=\sum_{k=1}^{n}N_k^*(\cdot)$. Then, clearly, 
	\begin{equation}
		\label{eqn_coup}
		N(\cdot)=\sum_{k=1}^{n}N_k(\cdot)\leq N^*(\cdot) \text{ with probability } 1.
	\end{equation}
	Also, $N^*$ is a Poisson process with intensity $cn$, since each independent particle migrates at a total rate $c$. 
	
	Let $Y_l,X_l\in\Z^d$ denote the step at the $l$-th migration event in the dependent and independent particle systems, respectively. Note that $(X_l)_{l\in\N}$ are i.i.d.\ with distribution $(a(0,i))_{i\in\Z^d}$. Since, under the above coupling, a dependent particle copies the step of an independent particle with a certain probability (possibly 0), and $\Gamma(0)$ is the minimum length of the box within which all $n$ dependent particles at time 0 are located, we have, for any $t\geq 0$,
	\begin{equation}
		\label{eqn_ineq}
		\Gamma(t)\leq \Gamma(0)+\sum_{l=1}^{N(t)}|Y_l| \leq \Gamma(0)+\sum_{l=1}^{N^*(t)}|X_l|.
	\end{equation}
	Therefore
	\begin{equation}
		\label{eqn_gam_ineq}
		\P^\xi(\Gamma(T)\geq k) \leq \P\big(S_{N^*(T)} \geq k-\Gamma(0)\big) \qquad \forall\, k \geq 0,
	\end{equation}
	where $ S_{N^*(T)}=\sum_{l=1}^{N*(T)}|X_l|$.
	
	To prove part (a), note that $\E[\e^{\delta S_{N^*(T)}}] < \infty$ and so, by Chebyshev's inequality, 
	\begin{equation}
		\P(S_{N^*(T)}\geq x)=\P(\e^{\delta S_{N^*(T)}}\geq \e^{\delta x})\leq \E[\e^{\delta S_{N^*(T)}}]\,\e^{-\delta x}.
	\end{equation}
	Thus, the inequality in \eqref{eqn_gam_ineq} reduces to
	\begin{equation}
		\label{eqn_assymp}
		\P^\xi(\Gamma(T)\geq k) \leq V\e^{-\delta k} \qquad \forall\, k \geq 0,
	\end{equation}
	where 
	\begin{equation}
		V=\E\left[\exp\{\delta\Gamma(0)+\delta S_{N^*(T)}\}\right] < \infty.
	\end{equation}
	For $k\in\N$, let $\alpha_k = \# \{i\in\Z^d\colon\,\|i\|_\infty=k\}$. Then, $\alpha_k= (2k+1)^d-(2k-1)^d \leq 4^d k^{d-1}$. Hence
	\begin{equation}
		\label{eqn_ineq_1}
		\sum_{i\in\Z^d\backslash\{0\}} N_i\, \P^\xi(\Gamma(T)\geq \|i\|) \leq \sum_{k\in\N} c_k \alpha_k\,\P^\xi(\Gamma(T)\geq k) 
		\leq \sum_{k\in\N} c_k 4^d k^{d-1}\,\P^\xi(\Gamma(T)\geq k),
	\end{equation}
	where $c_k=\sup\{N_i\colon\, \|i\|_\infty=k,i\in\Z^d\}$. Since, under the assumption of part (a), $\lim_{k \to \infty}\tfrac{1}{k}\log c_k = 0$, there exists a $K\in\N$ such that $c_k\leq \e^{\delta k/2}$ for all $k\geq K$. Hence, using \eqref{eqn_assymp}, we find that
	\begin{equation}
		\sum_{i\in\Z^d} N_i\, \P^\xi(\Gamma(T)\geq \|i\|) \leq N_{0}+\sum_{k=1}^{K-1} c_k \alpha_k 
		+ 4^d V\sum_{k=K}^{\infty} k^{d-1}\,\e^{-\delta k/2} < \infty,
	\end{equation}
	which settles part (a).
	
	To prove part (b), note that, under the assumption $\sum_{i\in\Z^d}\|i\|^\gamma a(0,i) <\infty$ for some $\gamma>d+\delta$, we have $\E[S_{N^*(T)}^\gamma]<\infty$, and since $S_{N^*(T)}$ is a positive random variable, we get
	\begin{equation}
		\P(S_{N^*(T)}\geq x)\leq \E[S_{N^*(T)}^\gamma]\,x^{-\gamma}.
	\end{equation}
	From \eqref{eqn_gam_ineq} we get
	\begin{equation}
		\P^\xi(\Gamma(T)\geq k) \leq \frac{V}{(k-\Gamma(0))^\gamma} \qquad \forall\, k > \Gamma(0),
	\end{equation}
	where $V=\E[S_{N^*(T)}^\gamma]$. By the assumption of part (b), there exists a $C >0$ such that 
	\begin{equation}
		c_k=\sup\{N_i\colon\, \|i\|_\infty=k,i\in\Z^d\} \leq Ck^\delta
	\end{equation}
	and so using \eqref{eqn_ineq_1}, we obtain
	\begin{equation}
		\sum_{i\in\Z^d} N_i\, \P^\xi(\Gamma(T) \geq \|i\|) \leq N_0+\sum_{k\leq \Gamma(0)} c_k \alpha_k 
		+ 4^d CV\sum_{k>\Gamma(0)} \frac{k^{d+\delta-1}}{(k-\Gamma(0))^\gamma} < \infty,
	\end{equation}
	which settles part (b).
\end{proof}


\subsection{Well-posedness}
\label{ss.wp}

In this section we prove Proposition \ref{prop2}, Proposition \ref{P.Mul_Exi} and Theorem \ref{T.Mul_Well}.


\subsubsection{Existence}

Since the state space $\mathcal{X}$ is compact, the theory described in \cite[Chapter I, Section 3]{L} is applicable in our setting without any significant changes. The interacting particle systems in \cite{L} have state space $W^S$, where $W$ is a compact phase space and $S$ is a countable site space. In our setting, the site space is $S=\Z^d$, but the phase space differs at each site, i.e., $[N_i]\times[M_i]$ at site $i \in \Z^d$. The general form of the generator of an interacting particle system in \cite{L} is
\begin{equation}
	\label{p_eqn10}
	(\Omega f)(\eta) = \sum_{T}\int_{W_T} c_T(\eta,\d\xi)[f(\eta^\xi)-f(\eta)],\quad\quad\eta \in \mathcal{X},
\end{equation}
where the sum is taken over all finite subsets $T$ of $S$, and $\eta^\xi$ is the configuration 
\begin{equation}
	\eta^\xi_i = \begin{cases}
		\xi_i \text{ if } i\in T,\\ \eta_i \text{ else.}
	\end{cases}
\end{equation} 
For finite $T\Subset\mathcal{X}$, $c_T(\eta,d\xi)$ is a finite positive measure on $W_T=W^T$. To make the latter compatible with our setting, we define $W_T=\prod_{i\in T}[N_i]\times[M_i]$. The interpretation is that $\eta$ is the current configuration of the system, $c_T(\eta,W_T)$ is the total rate at which a transition occurs involving \emph{all} the coordinates in $T$, and $c_T(\eta,d\xi)/c_T(\eta,W_T)$ is the distribution of the restriction to $T$ of the new configuration after that transition has taken place. Fix $\eta=(X_i,Y_i)_{i\in\Z^d}\in\mathcal{X}$. Comparing \eqref{p_eqn10} with the formal generator $L$ defined in \eqref{eqn8}, we see that the form of $c_T(\cdot,\cdot)$ is as follows:
\begin{itemize}
	\item $c_T(\eta,d\xi)=0$ if $|T|\geq 2$.
	\item For $|T|=1$, let $T=\{i\}$ for some $i\in\Z^d$. Then $c_T(\eta,\cdot)$ is the measure on $[N_i]\times[M_i]$ given by
	\begin{equation}
		\begin{aligned}
			c_T(\eta,\cdot) &= X_i\left[\,\sum_{j\in\Z^d}a(i,j)\tfrac{N_j-X_j}{N_j}\right]
			\delta_{(X_i-1,Y_i)}(\cdot)+(N_i-X_i)\left[\,\sum_{j\in\Z^d}a(i,j)\tfrac{X_j}{N_j}\right]\delta_{(X_i+1,Y_i)}(\cdot)\\
			&\qquad +\lambda X_i\tfrac{M_i-Y_i}{M_i}\,\delta_{(X_i-1,Y_i+1)}(\cdot)+\lambda(N_i-X_i)\tfrac{Y_i}{M_i}\,
			\delta_{(X_i+1,Y_i-1)}(\cdot).
		\end{aligned}
	\end{equation}
	Note that the total mass is
	\begin{equation}
		\begin{aligned}
			c_T(\eta,W_T) &= X_i\left[\sum_{j\in\Z^d}a(i,j)\tfrac{N_j-X_j}{N_j}\right]+(N_i-X_i)
			\left[\sum_{j\in\Z^d}a(i,j)\tfrac{X_j}{N_j}\right]\\
			&\qquad +\lambda X_i\frac{M_i-Y_i}{M_i}+\lambda(N_i-X_i)\tfrac{Y_i}{M_i}.
		\end{aligned}
	\end{equation}
\end{itemize}

\begin{lemma}{\bf [Bound on rates]}
	\label{lem3}
	Let $c=\sum_{i\in\Z^d} a(0,i) <\infty$. For a finite set $T\Subset \Z^d$, let $c_T=\sup_{\eta\in\mathcal{X}}c_T(\eta,W_T)$. Then $c_T\leq (c+\lambda) \mathbf{1}_{\{|T|=1\}}\sup_{i\in T} N_i$ with $c = \sum_{i\in\Z^d} a(0,i)$.
\end{lemma}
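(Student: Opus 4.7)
The plan is a direct bookkeeping calculation starting from the explicit form of $c_T(\eta,W_T)$ displayed just above the lemma. The indicator $\mathbf{1}_{\{|T|=1\}}$ in the bound is there only because the list of cases right above the lemma declares $c_T(\eta,d\xi)=0$ whenever $|T|\geq 2$; so that case is immediate and it remains to handle $T=\{i\}$ for some $i\in\Z^d$.

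For $T=\{i\}$, I would split the expression for $c_T(\eta,W_T)$ into two pieces along the same split as in the generator $L=L_{\text{Mig}}+L_{\text{Res}}+L_{\text{Exc}}$: a migration/resampling piece
\[
X_i\sum_{j\in\Z^d}a(i,j)\frac{N_j-X_j}{N_j}+(N_i-X_i)\sum_{j\in\Z^d}a(i,j)\frac{X_j}{N_j}
\]
and an exchange piece $\lambda X_i\tfrac{M_i-Y_i}{M_i}+\lambda (N_i-X_i)\tfrac{Y_i}{M_i}$. Each ratio $\tfrac{N_j-X_j}{N_j},\tfrac{X_j}{N_j},\tfrac{M_i-Y_i}{M_i},\tfrac{Y_i}{M_i}$ lies in $[0,1]$, so each can be dropped. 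Combined with translation invariance of $a(\cdot,\cdot)$ (Assumption~\ref{assumpt1}), which gives $\sum_{j\in\Z^d}a(i,j)=c$, the migration/resampling piece is bounded by $cX_i+c(N_i-X_i)=cN_i$, while the exchange piece is bounded by $\lambda X_i+\lambda (N_i-X_i)=\lambda N_i$. Adding and taking the supremum over $\eta\in\mathcal{X}$ yields $c_T\leq (c+\lambda)N_i\leq (c+\lambda)\sup_{j\in T}N_j$, which is exactly the claim.

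There is no genuine obstacle here; the proof is one line of telescoping once the fractions are bounded by $1$. The content of the lemma is not the inequality itself but what it fails to give: a bound \emph{uniform in $T$}. Because the right-hand side involves $\sup_{i\in T}N_i$, Liggett's classical sufficient conditions for existence of the process (\cite[Chapter I]{L}) cannot be applied directly when $(N_i)_{i\in\Z^d}$ is unbounded, which is precisely why the authors later invoke the pregenerator/martingale-problem route (Proposition~\ref{prop2}, Proposition~\ref{P.Mul_Exi}) and impose the tail conditions of Corollary~\ref{C.Mul_Dual} to recover uniqueness via duality.
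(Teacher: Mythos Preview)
Your proof is correct and matches the paper's argument essentially line for line: dispose of $|T|\geq 2$ by definition, then for $T=\{i\}$ bound each ratio $\tfrac{N_j-X_j}{N_j},\tfrac{X_j}{N_j},\tfrac{M_i-Y_i}{M_i},\tfrac{Y_i}{M_i}$ by $1$ and telescope to $(c+\lambda)N_i$. Your closing remark about the lack of a uniform-in-$T$ bound is accurate commentary on why the lemma feeds into the pregenerator route rather than Liggett's direct construction.
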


\begin{proof}[Proof.]
	Clearly, $c_T=0$ if $|T|\geq 2$. So let $T=\{i\}$ for some $i\in\Z^d$. We see that, for $\eta=(X_k,Y_k)_{k\in\Z^d}$, $c_T(\eta,W_T)\leq c X_i + c(N_i-X_i) + \lambda X_i  +\lambda(N_i-X_i)=(c+\lambda)N_i=(c+\lambda)\sup_{i\in T} N_i$. 
\end{proof}

\begin{proof}[Proof of Proposition \ref{prop2}.]
	By \cite[Proposition 6.1 of Chapter I]{L}, it suffices to show that
	\begin{equation}
		\label{p_eqn11}
		\sum_{T\ni\, i} c_T < \infty \qquad \forall\, i\in S,
	\end{equation}
	where the sum is taken over all finite subsets $T\Subset S$ containing $i\in S$. Since in our case $S=\Z^d$, we let $i\in \Z^d$ be fixed. By Lemma \ref{lem3}, the sum reduces to $c_{\{i\}}$, and clearly $c_{\{i\}}\leq (c+\lambda) N_i < \infty$. 
\end{proof}

\begin{proof}[Proof of Proposition \ref{P.Mul_Exi}.]
	By  \cite[Proposition 6.1 and Theorem 6.7 of Chapter I]{L}, to show existence of solutions to the martingale problem for $(L,\mathcal{D})$, it is enough to prove that \eqref{p_eqn11} is satisfied. But we already showed this in the proof of Proposition \ref{prop2}. 
\end{proof}


\subsubsection{Uniqueness}

Before we turn to the proof of Theorem \ref{T.Mul_Well}, we state and prove the following proposition, which, along with the duality established in Corollary \ref{C.Mul_Dual}, will play a key role in the proof of the uniqueness of solutions to the martingale problem.

\begin{proposition}{\bf [Separation]}
	\label{prop4}
	Let $D\colon\,\mathcal{X}\times\mathcal{X}^*\to[0,1]$ be the duality function defined in Lemma \ref{lem2}. Define the set of functions $\mathcal{M}=\{D(\,\cdot\,;\,\xi)\colon\,\xi\in\mathcal{X}^*\}$. Then $\mathcal{M}$ is separating on the set of probability measures on $\mathcal{X}$.
\end{proposition}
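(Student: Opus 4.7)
The plan is to show that the linear span of $\mathcal{M}$ is sup-norm dense in $C(\mathcal{X})$, from which the separation property follows immediately: if two probability measures agree on $\mathcal{M}$, they agree by linearity on its span, hence on all of $C(\mathcal{X})$ by continuity, hence they are equal.

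First I would reduce the problem to local functions. Since $\mathcal{X} = \prod_{i\in\Z^d}[N_i]\times[M_i]$ is a countable product of finite sets, it is a compact metrizable space in the product topology, and the algebra of local functions (those depending on only finitely many coordinates) separates points and contains the constants. By Stone--Weierstrass it is dense in $C(\mathcal{X})$, so it suffices to show that every local function lies in the linear span of $\mathcal{M}$.

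Next, fix a finite $\Lambda\Subset\Z^d$ and restrict attention to the subfamily
\begin{equation*}
	\mathcal{M}_\Lambda := \bigl\{D(\,\cdot\,;\xi)\colon\xi\in\mathcal{X}^*,\;\mathrm{supp}(\xi)\subseteq\Lambda\bigr\}
	= \Bigl\{(X_k,Y_k)_{k\in\Z^d}\mapsto\prod_{i\in\Lambda}d_{N_i}(X_i,n_i)\,d_{M_i}(Y_i,m_i)\colon n_i\in[N_i],m_i\in[M_i]\Bigr\},
\end{equation*}
where $d_\alpha$ is the single-site kernel from Lemma~\ref{lem1}. The key observation is a one-site linear algebra fact: for each $\alpha\in\N$, the $(\alpha+1)\times(\alpha+1)$ matrix $\bigl(d_\alpha(x,n)\bigr)_{x,n\in[\alpha]}$ is lower triangular (indeed $d_\alpha(x,n)=0$ whenever $n>x$) with diagonal entries $d_\alpha(x,x)=1/\binom{\alpha}{x}\neq 0$. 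Hence it is invertible, so $\{d_\alpha(\,\cdot\,,n)\colon n\in[\alpha]\}$ is a basis of the space of real-valued functions on $[\alpha]$. Taking tensor products over the coordinates in $\Lambda$, $\mathcal{M}_\Lambda$ is a basis for the space of all functions on $\prod_{i\in\Lambda}[N_i]\times[M_i]$, i.e.\ for all local functions supported on $\Lambda$.

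Letting $\Lambda$ vary over all finite subsets of $\Z^d$, we conclude that the linear span of $\mathcal{M}$ contains every local function, hence is sup-norm dense in $C(\mathcal{X})$, and the separation property follows as noted above. There is no real obstacle here: the only substantive step is the triangular-matrix observation for $d_\alpha$, and everything else is a routine combination of Stone--Weierstrass with tensorisation over $\Lambda$.
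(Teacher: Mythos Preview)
Your proof is correct, but it takes a somewhat different route from the paper's. The paper works directly with finite-dimensional distributions: for a random vector $(X_1,\ldots,X_n)$ in $\prod_l[N_l]$ it first converts the factorial-moment family $\mathcal{F}$ to the mixed-moment family $\mathcal{F}^*$ (via the Stirling-number identity \eqref{p_eqn1}), and then writes the linear map from the probability vector $\vec{p}$ to the moment vector $\vec{e}$ as a Kronecker product $A=A_1\otimes\cdots\otimes A_n$ of Vandermonde matrices, concluding invertibility from the Vandermonde structure. You instead bypass the conversion to power moments altogether by observing that the single-site kernel matrix $\bigl(d_\alpha(x,n)\bigr)_{x,n\in[\alpha]}$ is already lower-triangular with nonzero diagonal, hence invertible, and then tensorise over the coordinates in $\Lambda$. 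Your argument is more direct---it avoids the detour through Stirling numbers and Vandermonde matrices---and in fact yields the slightly stronger statement that $\mathrm{span}(\mathcal{M})$ is sup-norm dense in $C(\mathcal{X})$, not merely measure-separating. The paper's route, on the other hand, makes the connection to the classical moment problem explicit, which fits with its earlier use of the same identity in the proof of Proposition~\ref{prop1}.
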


\begin{proof}[Proof.]
	Let $\P$ be a probability measure on $\mathcal{X}=\prod_{i\in\Z^d}[N_i]\times[M_i]$. It suffices to show that the finite-dimensional distributions of $\P$ are determined by $\{\int_{\mathcal{X}} f\,d\,\P \colon\,f\in\mathcal{M}\}$. Note that it is enough to show the following:
	\begin{itemize}
		\item[$\bullet$]
		Let $X=(X_1,X_2,\ldots,X_n)\in \prod_{i=1}^{n}[N_i]$ be an $n$-dimensional random vector with some distribution $\P_X$ on $\prod_{i=1}^{n}[N_i]$. Then $\P_X$ is determined by the family
		\begin{equation}
			\mathcal{F}=\left\lbrace\E\left[\,\prod_{i=1}^n\tfrac{\binom{X_i}{\alpha_i}}{\binom{N_i}{\alpha_i}}\right]\colon\, 
			(\alpha_i)_{1\leq i\leq n}\in\prod_{i=1}^n[N_i]\right\rbrace.
		\end{equation}
	\end{itemize}
	By \eqref{p_eqn1}, the family $\mathcal{F}$ is equivalent to the family
	\begin{equation}
		\mathcal{F}^*=\left\lbrace\E\left[\,\prod_{i=1}^nX_i^{\alpha_i}\right]\colon\, 
		(\alpha_i)_{1\leq i\leq n}\in\prod_{i=1}^n[N_i]\right\rbrace
	\end{equation}
	containing the mixed moments of $(X_1,\ldots,X_n)$. Since $X$ takes a total of $N=\prod_{i=1}^n (N_i+1)$ many values, we can write the distribution $\P_X$ as the $N$-dimensional vector $\vec{p}=(p_1,\ldots,p_N)$, where $p_i=\P_X(X=f^{-1}(i))$ and $f\colon\,\prod_{i=1}^{n}[N_i]\to \{1,\dots,N\}$ is the bijection defined by
	\begin{equation}
		f(x_1,x_2,\ldots,x_n) = \sum_{i=1}^{n-1}\left(\prod_{j=i+1}^n(N_j+1)\right)x_i + x_n + 1,
		\quad (x_1,\ldots,x_n)\in\prod_{i=1}^n[N_i].
	\end{equation}
	Note that $\mathcal{F}^*$ also contains $N$ elements, and so we can write $\mathcal{F}^*$ as the $N$-dimensional vector $\vec{e}=(e_1,\ldots,e_N)$, where $e_i=\E[\prod_{k=1}^{n}X_k^{\alpha_k}],(\alpha_1,\ldots,\alpha_n)=f^{-1}(i)$. We show that there exists an invertible linear operator that maps $\vec{p}$ to $\vec{e}$. Indeed, for $i=1,\ldots,n$, define the $(N_i+1)\times(N_i+1)$ Vandermonde matrix $A_i$,
	\begin{equation}
		A_i=\begin{bmatrix}
			1 & 1 & 1 & \dots & 1\\
			\alpha_1 & \alpha_2 & \alpha_3 & \dots & \alpha_{N_i+1}\\
			\alpha_1^2 & \alpha_2^2 & \alpha_3^2 & \dots & \alpha_{N_i+1}^2\\
			\vdots & \vdots & \vdots & \ddots &\vdots \\
			\alpha_1^{N_i} & \alpha_2^{N_i} & \alpha_3^{N_i} & \dots & \alpha_{N_i+1}^{N_i}
		\end{bmatrix},
		\qquad (\alpha_1,\alpha_2\ldots,\alpha_{N_i+1})=(0,1,\ldots,N_i).
	\end{equation}
	Being Vandermonde matrices, all $A_i$ are invertible. Finally, define the $N\times N$ matrix $A$ by $A= A_1 \otimes A_2 \otimes \cdots \otimes A_n$, where $\otimes$ denotes the Kronecker product for matrices. Then $A$ is invertible because all $A_i$ are. Also, we can check that $A\vec{p}=\vec{e}$, and hence the distribution of $X$ given by $\vec{p}=A^{-1}\vec{e}$ is uniquely determined by $\vec{e}$, i.e., the family $\mathcal{F}^*$. 
\end{proof}

\begin{proof}[Proof of Theorem \ref{T.Mul_Well}.]
	We use \cite[Proposition 4.7]{EK}, which states the following (reinterpreted in our setting):
	\begin{itemize}
		\item[$\bullet$]
		Let $\mathcal{S}_1$ be compact and $\mathcal{S}_2$ be separable. Let $x\in\mathcal{S}_1,y\in\mathcal{S}_2$ be arbitrary and $D\colon\,\mathcal{S}_1\times\mathcal{S}_2\to\mathbb{R}$ be such that the set $\{D(\,\cdot\,;z)\colon\,z\in\mathcal{S}_2\}$ is separating on the set of probability measures on $\mathcal{S}_1$. Assume that, for any two solutions $(\eta_t)_{t\geq 0}$ and $(\xi_t)_{t\geq 0}$ of the martingale problem for $(L_1,\mathcal{D}_1)$ and $(L_2,\mathcal{D}_2)$ with initial states $x$ and $y$, the duality relation holds: $\E_x[D(\eta_t,y)]=\E^y[D(x,\xi_t)]$ for all $t\geq 0$. If for every $z\in\mathcal{S}_2$ there exists a solution to the martingale problem for $(L_2,\mathcal{D}_2)$ with initial state $z$, then for every $\eta\in\mathcal{S}_1$ uniqueness holds for the martingale problem for $(L_1,\mathcal{D}_1)$ with initial state $\eta$.
	\end{itemize}
	Pick $\mathcal{S}_1=\mathcal{X}$, $\mathcal{S}_2=\mathcal{X}^*$, $(L_1,\mathcal{D}_1)=(L,\mathcal{D})$ and $(L_2,\mathcal{D}_2)=(L_\text{dual},C(\mathcal{X}^*))$, where $L_\text{dual}$ is the generator of the dual process $Z^*$. Note that in our setting the martingale problem for $(L_\text{dual},C(\mathcal{X}^*))$ is already well-posed (the unique solution is the dual process $Z^*$ in Lemma~\ref{lem:unique-dual}). Hence, combining the above observations with Proposition \ref{prop4} and Corollary \ref{C.Mul_Dual}, we get uniqueness of the solutions to the martingale problem for $(L,\mathcal{D})$ for every initial state $\eta\in\mathcal{X}$.
	
	The second claim follows from \cite[Theorem 6.8 of Chapter I]{L}.
\end{proof}


\section{Proofs: equilibrium and clustering criterion}
\label{s.criterion}

In Section~\ref{ss.ceq} we prove Theorem \ref{T.Equil} and Corollary \ref{C.Equil}. In Section~\ref{ss.gv} we derive expressions for the single-site genetic variability in terms of the dual process. In Section~\ref{dual_one} we use one dual particle to write down expressions for first moments. In Section~\ref{dual_two} we use two dual particles to write down expressions for second moments. In Section~\ref{ss.couppr} we use these expressions to prove Theorem \ref{T.Dichotomy}.

\subsection{Convergence to equilibrium}
\label{ss.ceq}
\begin{proof}[Proof of Theorem \ref{T.Equil}.]
	Since the state space $\mathcal{X}$ is compact and thus the set of all probability measures on $\mathcal{X}$ is compact as well, by Prokhorov's theorem. It therefore suffices to prove convergence of the finite-dimensional distributions of $Z(t)=(X_i(t),Y_i(t))_{i\in\Z^d}$. Now recall from the proof of Proposition \ref{prop4} that the distribution of an $n$-dimensional random vector $X(t):=(X_1(t),\ldots,X_n(t))$ taking values in $\prod_{l=1}^{n}[N_l]$ is determined by 
	\begin{equation}
		\mathcal{F}_t=\left\lbrace\E\left[\,\prod_{l=1}^n\tfrac{\binom{X_l(t)}{\alpha_l}}{\binom{N_l}{\alpha_l}}\right]\colon\, 
		(\alpha_l)_{1\leq l\leq n}\in\prod_{l=1}^n[N_l]\right\rbrace.
	\end{equation}
	In fact, the distribution of $X(t)$ converges if and only if $\E\left[\,\prod_{l=1}^n\nicefrac{\binom{X_l(t)}{\alpha_l}}{\binom{N_l}{\alpha_l}}\right]$ converges for all $(\alpha_l)_{1\leq l\leq n}\in\prod_{l=1}^n[N_l]$ as $t\to\infty$. Since our duality function is given by
	\begin{equation}
		D((X_k,Y_k)_{k\in\Z^d};(n_k,m_k)_{k\in\Z^d})=\prod_{i\in\Z^d}\frac{\binom{X_i}{n_i}}{\binom{N_i}{n_i}}\frac{\binom{Y_i}{m_i}}{\binom{M_i}{m_i}}\mathbf{1}_{\{n_i\leq X_i,m_i\leq Y_i\}},
	\end{equation}
	it suffices to show that $\lim_{t\to\infty}\E_{\nu_\theta}[D(Z(t);\eta)]$ exists for all $\eta\in\mathcal{X}^*$. 
	Let $\eta\in\mathcal{X}^*$ be fixed. By duality, we have
	\begin{equation}
		\begin{aligned}
			\E_{\nu_\theta}[D(Z(t);\eta)]
			&=\int_{\mathcal{X}}\E_\xi[D(Z(t);\eta)]\,\d\nu_\theta(\xi)\\ 
			&= \int_{\mathcal{X}}\E^\eta[D(\xi;Z^*(t))]\,\d\nu_\theta(\xi) 
			= \E^\eta\left[\int_{\mathcal{X}}D(\xi;Z^*(t))\,\d\nu_\theta(\xi)\right],
		\end{aligned}
	\end{equation}
	where $\E_\xi$ denotes expectation w.r.t the law of $Z(t)$ started at configuration $\xi\in\mathcal{X}$, $Z^*(t)=(n_i(t),m_i(t))_{i\in\Z^d}$ is the dual process started at configuration $\eta$, and $\E^\eta$ denotes expectation w.r.t the law of the dual process. A simple calculation shows that if $V$ is a random variable with distribution $\mathrm{Binomial}(N,p)$, then $\E\left[\binom{V}{n}/\binom{N}{n}\right]=p^n$ for $0\leq n\leq N$. Since $(X_i(0),Y_i(0))_{i\in\Z^d}$ are all independent under $\nu_\theta$ with Binomials as marginal distributions, we have
	\begin{equation}
		\E_{\nu_\theta}[D(Z(t);\eta)]=\E^\eta\left[\prod_{i\in\Z^d}\theta^{n_i(t)}\,\theta^{m_i(t)}\right] = \E^\eta[\theta^{|Z^*(t)|}],
	\end{equation}
	where $|Z^*(t)|:=\sum_{i\in\Z^d}n_i(t)+m_i(t)$ is total number of particles in the dual process at time $t$. Now, since the dual process is coalescing, $|Z^*(t)|$ is decreasing in $t$. Since $\theta\in[0,1]$, we see that $\E_{\nu_\theta}[D(Z(t);\eta)]$ is increasing in $t$. Thus, $\lim_{t\to\infty}\E_{\nu_\theta}[D(Z(t);\eta)]$ exists, which proves the existence of an equilibrium measure $\nu$ such that the distribution of $Z(t)$ weakly converges to $\nu$. Also, by definition, $\E_\nu[D(Z(0);\eta)]=\lim_{t\to\infty}\E_{\nu_\theta}[D(Z(t);\eta)] =\lim_{t\to\infty}\E^{\eta}[\theta^{|Z^*(t)|}]$.
\end{proof}
\begin{proof}[Proof of Corollary \ref{C.Equil}.]
	This follows by choosing $\eta = \vec{\delta}_{i,A}$ and $\eta=\vec{\delta}_{i,D}$ in the last part of Theorem \ref{T.Equil} and noting that $\E^{\eta}[\theta^{|Z^*(t)|}]=\theta$ when $|\eta|=1$.
\end{proof}

\subsection{Genetic variability}
\label{ss.gv}

For $i,j\in\Z^d$ and $t\geq0$, define
\begin{equation}
	\label{p_eqn12}
	\Delta_{i,j}(t)= \Delta_{(i,A),(j,A)}(t) + \Delta_{(i,A),(j,D)}(t),
\end{equation}
where
\begin{equation}
	\label{p_eqn13}
	\Delta_{(i,A),(j,A)}(t) =
	\begin{cases}
		\tfrac{X_i(t)(N_j-X_j(t))}{N_i N_j} + \tfrac{X_j(t)(N_i-X_i(t))}{N_j N_i} &\text{ if } i\neq j,\\[0.2cm]
		\tfrac{2X_i(t)(N_i-X_i(t))}{N_i(N_i-1)} &\text{ if } i=j \text{ and } N_i\neq 1,\\[0.2cm]
		0 &\text{ otherwise,}
	\end{cases}
\end{equation}
is the genetic variability (also frequently referred to as `sample heterozygosity') at time $t$ between the active populations of colony $i$ and $j$, i.e., the probability that two individuals drawn randomly from the two populations at time $t$ are of different type, and
\begin{equation}
	\label{p_eqn14}
	\Delta_{(i,A),(j,D)}(t) = \tfrac{X_i(t)(M_j-Y_j(t))}{N_i M_j} + \tfrac{(N_i-X_i(t))Y_j(t)}{N_i M_j}
\end{equation}
is the genetic variability at time $t$ between the active population of colony $i$ and the dormant population of colony $j$. Note that the conditions in Definition \ref{def;dich} are equivalent to
\begin{equation}
	\label{cluster_def}
	\lim\limits_{t\to\infty}\E[\Delta_{i,j}(t)]=0 \qquad \forall\,\, i,j\in\Z^d,
\end{equation}
where the expectation is taken conditional on an arbitrary initial condition $(X_i(0),Y_i(0))_{i\in\Z^d}$, which we suppress from the notation.

We use the dual process to compute $\E(\Delta_{(i,A),(j,A)}(t))$ and $\E(\Delta_{(i,A),(j,D)}(t))$, namely,
\begin{equation}
	\label{p_eqn15}
	\E(\Delta_{(i,A),(j,A)}(t))=
	\begin{cases}
		\E\left[\tfrac{X_i(t)}{N_i}\right)+\E\left(\tfrac{X_j(t)}{N_j}\right]
		-2\,\E\left[\tfrac{X_i(t)X_j(t)}{N_iN_j}\right] & \text{ if } i\neq j,\\[0.2cm]
		2\left(\E\left[\tfrac{X_i(t)}{N_i}\right]-\E\left[\tfrac{X_i(t)(X_i(t)-1)}{N_i(N_i-1)}\right]\right) 
		& \text{ otherwise,}
	\end{cases}
\end{equation}
and
\begin{equation}
	\label{p_eqn16}
	\E[\Delta_{(i,A),(j,D)}(t)]= \E\left[\tfrac{X_i(t)}{N_i}\right]+\E\left[\tfrac{Y_j(t)}{M_j}\right]
	-2\,\E\left(\tfrac{X_i(t)Y_j(t)}{N_iM_j}\right).
\end{equation}
Thus, in terms of the duality function $D$ defined in Lemma \ref{lem2},  
\begin{equation}
	\label{p_eqn17}
	\E[\Delta_{(i,A),(j,A)}(t)] = \E\Big[D(Z(t);\vec{\delta}_{i,A})\Big]+\E\Big[D(Z(t);\vec{\delta}_{j,A})\Big]
	-2\,\E\Big[D(Z(t);\vec{\delta}_{i,A}+\vec{\delta}_{j,A})\Big],
\end{equation} 
where $\vec{\delta}_{i,A}, \vec{\delta}_{j,A}$ are defined in \eqref{kron_del}. Similarly, 
\begin{equation}
	\label{p_eqn18}
	\E[\Delta_{(i,A),(j,D)}(t))]
	=\E\Big[D(Z(t);\vec{\delta}_{i,A})\Big]+\E\Big[D(Z (t);\vec{\delta}_{j,D})\Big]
	-2\,\E\Big[D(Z(t);\vec{\delta}_{i,A}+\vec{\delta}_{j,D})\Big].
\end{equation}
Since, by the duality relation in \eqref{eqn15}, 
\begin{equation}
	\E\Big[D(Z(t);Z^*(0))\Big] = \E\Big[D(Z(0);Z^*(t))\Big],
\end{equation} 
we have
\begin{equation}
	\begin{aligned}
		&\E^{\vec{\delta}_{i,A}}\Big[D(\eta_0;\xi_t)\Big]
		=\E\Big[\tfrac{X_i(t)}{N_i}\Big],\quad \E^{\vec{\delta}_{i,D}}\Big[D(\eta_0;\xi_t)\Big]=\E\Big[\tfrac{Y_i(t)}{M_i}\Big],\\
		&\E^{\vec{\delta}_{i,A}+\vec{\delta}_{j,A}}\Big[D(\eta_0;\xi_t)\Big]
		= \begin{cases}
			\E\Big[\tfrac{X_i(t)(X_i(t)-1)}{N_i(N_i-1)}\Big] & \text{ if } i=j,\\[0.2cm]
			\E\Big[\tfrac{X_i(t)X_j(t)}{N_iN_j}\Big] & \text{ otherwise,}
		\end{cases}\\
		&\E^{\vec{\delta}_{i,A}+\vec{\delta}_{j,D}}\Big(D(\eta_0;\xi_t)\Big)=\E\left(\tfrac{X_i(t)Y_j(t)}{N_iM_j}\right),
	\end{aligned}
\end{equation}
where $\eta_0=Z^*(0)$ and the expectation in the left-hand  side is taken with respect to the dual process $(\xi_t)_{t\geq 0}=Z^*$ defined in Definition \ref{def2}. Combining the above with \eqref{p_eqn17}--\eqref{p_eqn18}, we get
\begin{equation}
	\label{p_eqn19}
	\begin{aligned}
		\E[\Delta_{(i,A),(j,A)}(t)]
		&=\left(\E^{\vec{\delta}_{i,A}}\Big[D(\eta_0;\xi_t)\Big]-\E^{\vec{\delta}_{i,A}+\vec{\delta}_{j,A}}\Big[D(\eta_0;\xi_t)\Big]\right)\\
		&\qquad+\left(\E^{\vec{\delta}_{j,A}}\Big[D(\eta_0;\xi_t)\Big]-\E^{\vec{\delta}_{i,A}+\vec{\delta}_{j,A}}\Big[D(\eta_0;\xi_t)\Big]\right)
	\end{aligned}
\end{equation}
and
\begin{equation}
	\label{p_eqn20}
	\begin{aligned}
		\E[\Delta_{(i,A),(j,D)}(t)]
		&=\left(\E^{\vec{\delta}_{i,A}}\Big[D(\eta_0;\xi_t)\Big]-\E^{\vec{\delta}_{i,A}+\vec{\delta}_{j,D}}\Big[D(\eta_0;\xi_t)\Big]\right)\\
		&\qquad
		+\left(\E^{\vec{\delta}_{j,D}}\Big[D(\eta_0;\xi_t)\Big]-\E^{\vec{\delta}_{i,A}+\vec{\delta}_{j,D}}\Big[D(\eta_0;\xi_t)\Big]\right).
	\end{aligned}
\end{equation}
In Sections~\ref{dual_one}--\ref{dual_two} we derive expressions for the terms appearing in \eqref{p_eqn19}--\eqref{p_eqn20}.


\subsection{Dual: single particle}
\label{dual_one}

We saw earlier that, in order to compute the first moment of $X_i(t)$ and $Y_i(t)$, we need to put a single particle at site $i$ in the active and the dormant state as initial configurations, respectively. This motivates us to analyse the dual process when it starts with a single particle. The generator $L_\text{dual}$ of the dual process can be written as
\begin{equation}
	\label{p_eqn21}L_{\text{dual}}=L_{\text{Coal}}+L_{AD}+L_{DA}+L_{\text{Mig}},
\end{equation} 
where
\begin{align}
	&(L_{\text{Coal}}f)(\xi)=\sum_{i\in\Z^d}\frac{n_i(n_i-1)}{2N_i}[f(\xi-\vec{\delta}_{i,A})-f(\xi)]
	+\sum_{i\in\Z^d}\sum_{\overset{j\in\Z^d}{j\neq i}}\frac{a(i,j)}{N_j}n_in_j[f(\xi-\vec{\delta}_{i,A})-f(\xi)],\\
	&(L_{AD}f)(\xi)=\sum_{i\in\Z^d}\frac{\lambda\,n_i(M_i-m_i)}{M_i}[f(\xi-\vec{\delta}_{i,A}+\vec{\delta}_{i,D})-f(\xi)],\\
	&(L_{DA}f)(\xi)=\sum_{i\in\Z^d}\frac{\lambda\,m_i(N_i-n_i)}{M_i}[f(\xi+\vec{\delta}_{i,A}-\vec{\delta}_{i,D})-f(\xi)],\\
	&(L_{\text{Mig}}f)(\xi)=\sum_{i\in\Z^d}\sum_{\overset{j\in\Z^d}{j\neq i}}
	\frac{a(i,j)}{N_j}n_i(N_j-n_j)[f(\xi-\vec{\delta}_{i,A}+\vec{\delta}_{j,A})-f(\xi)],
\end{align}
for $f\in C(\mathcal{X}^*)$ and $\xi=(n_i,m_i)_{i\in\Z^d}\in\mathcal{X}^*$. 

When there is a single particle in the system at time 0, and consequently at any later time, the only parts of the generator that are non-zero are $L_{AD}$, $L_{DA}$ and $L_{\text{Mig}}$. Here, $L_{AD}$ turns an active particle at site $i$ into a dormant particle at site $i$ at rate $\lambda$, $L_{DA}$ turns a dormant particle at site $i$ into an active particle at site $i$ at rate $\lambda K_i$, with $K_i=\tfrac{N_i}{M_i}$, while $L_{\text{Mig}}$ moves an active particle at site $i$ to site $j\neq i$ at rate $a(i,j)$. Let us denote the state of the particle at time $t$ by $\xi(t)\in\Z^d\times\{A,D\}$, where the first coordinate of $\xi(t)$ is the location of the particle and the second coordinate indicates whether the particle is active ($A$) or dormant ($D$). Let $\P^\xi$ be the law of the process $(\xi(t))_{t\geq0}$ with initial state $\xi$. 

\begin{lemma}{\bf [First moments]}
	\label{lem_first}
	\begin{equation}
		\begin{aligned}
			\E\left[\frac{X_i(t)}{N_i}\right] 
			&= \sum_{k\in\Z^d}\frac{X_k(0)}{N_k}\,\P^{(i,A)}(\xi(t)=(k,A))+\frac{Y_k(0)}{M_k}\P^{(i,A)}(\xi(t)=(k,D)),\\
			\E\left[\frac{Y_i(t)}{M_i}\right] 
			&= \sum_{k\in\Z^d}\frac{X_k(0)}{N_k}\,\P^{(i,D)}(\xi(t)=(k,A))+\frac{Y_k(0)}{M_k}\P^{(i,D)}(\xi(t)=(k,D)).
		\end{aligned}
	\end{equation}
\end{lemma}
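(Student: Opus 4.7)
The plan is to apply the multi-colony duality relation of Theorem~\ref{T.Mul_Dual} with a single particle as the initial configuration of the dual, and then evaluate the resulting dual expectation by conditioning on the terminal state of the particle.

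First, I would take the dual starting configuration to be $\xi = \vec{\delta}_{i,A}$, so that the duality function simplifies to
\begin{equation*}
D(Z(t);\vec{\delta}_{i,A}) = \frac{\binom{X_i(t)}{1}}{\binom{N_i}{1}} = \frac{X_i(t)}{N_i}.
\end{equation*}
Applying \eqref{eqn15} (whose hypothesis is already in force since $(N_i)_{i\in\Z^d}\in\mathcal{N}$ by the blanket assumption at the end of Section~\ref{ss.multi}) yields
\begin{equation*}
\E\!\left[\frac{X_i(t)}{N_i}\right] = \E^{\vec{\delta}_{i,A}}\!\left[D(Z(0);Z^*(t))\right].
\end{equation*}
The analogous identity holds with $(i,A)$ replaced by $(i,D)$ and $X_i/N_i$ replaced by $Y_i/M_i$.

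Next I would observe that when the dual starts with a single particle, it remains a single-particle process for all $t\geq 0$. Indeed, $L_{\text{Coal}}$ vanishes on configurations with $n_i\leq 1$ and with only one particle all the repulsive factors $(M_i-m_i)/M_i$, $(N_i-n_i)/N_i$ and $(N_j-n_j)/N_j$ in $L_{AD}$, $L_{DA}$, $L_{\text{Mig}}$ reduce to $1$. Hence the dual collapses to the $\Z^d\times\{A,D\}$-valued Markov chain $(\xi(t))_{t\geq 0}$ described just before the statement of the lemma: an active particle at $i$ migrates to $j$ at rate $a(i,j)$ and becomes dormant at rate $\lambda$, while a dormant particle at $i$ becomes active at rate $\lambda K_i$. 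In particular, for all $t\geq 0$ we have $Z^*(t) = \vec{\delta}_{\xi(t)}$ almost surely under $\P^{\vec{\delta}_{i,A}}$ and under $\P^{\vec{\delta}_{i,D}}$.

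Finally, I would condition on the value of $\xi(t)$, using the elementary identities
\begin{equation*}
D(Z(0);\vec{\delta}_{k,A}) = \frac{X_k(0)}{N_k}, \qquad D(Z(0);\vec{\delta}_{k,D}) = \frac{Y_k(0)}{M_k},
\end{equation*}
to obtain
\begin{equation*}
\E^{\vec{\delta}_{i,A}}\!\left[D(Z(0);Z^*(t))\right] = \sum_{k\in\Z^d}\frac{X_k(0)}{N_k}\,\P^{(i,A)}(\xi(t)=(k,A)) + \frac{Y_k(0)}{M_k}\,\P^{(i,A)}(\xi(t)=(k,D)),
\end{equation*}
which is exactly the first formula; the second formula follows by the same argument with initial state $(i,D)$. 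There is no real obstacle here: the content is just duality plus the immediate observation that a single dual particle does not feel the interaction. The only thing to be careful about is interchanging the (possibly infinite) sum over $k$ with the expectation, which is legitimate because the functions $X_k(0)/N_k$ and $Y_k(0)/M_k$ lie in $[0,1]$ and the events $\{\xi(t)=(k,\cdot)\}$ partition the state space of the single-particle chain.
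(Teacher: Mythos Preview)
Your proof is correct and follows exactly the same line as the paper's: apply the duality relation with the one-particle initial configuration $\vec{\delta}_{i,A}$ (resp.\ $\vec{\delta}_{i,D}$), note that the dual stays a single particle, and read off the value of the duality function on each possible terminal state. You have simply filled in more detail than the paper (explicitly checking that $L_{\text{Coal}}$ vanishes and that the repulsive factors collapse to $1$, and justifying the sum--expectation interchange), but the approach is the same.
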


\begin{proof}[Proof.]
	Recall that, via the duality relation,
	\begin{equation}
		\E\left[\frac{X_i(t)}{N_i}\right] = \E^{\vec{\delta}_{i,A}}\left[\prod_{k\in\Z^d}\frac{\binom{X_k(0)}{n_k(t)}}{\binom{N_k}{n_k(t)}} \frac{\binom{Y_k(0)}{m_k(t)}}{\binom{M_k}{m_k(t)}}\mathbf{1}_{\{n_k(t)\leq X_k(0),m_k(t)\leq Y_k(0)\}}\right],
	\end{equation}
	where the expectation in the right-hand side is taken with respect to the dual process with initial state $\vec{\delta}_{i,A}$ (a single active particle at site $i$), which has law $\P_{(i,A)}$. Since the term inside the expectation is equal to $\frac{X_k(0)}{N_k}$ or $\frac{Y_k(0)}{M_k}$, depending on whether $\xi(t)=(k,A)$ or $\xi(t)=(k,D)$, the claim follows immediately. The same argument holds for $\E[\tfrac{Y_i(t)}{M_i}]$ with initial condition $(i,D)$ in the dual process.
\end{proof}


\subsection{Dual: two particles}
\label{dual_two}

We need to find expressions for the second moments appearing in \eqref{p_eqn15}--\eqref{p_eqn16} in order to fully specify $\E(\Delta_{(i,A),(j,A)}(t))$ and $\E(\Delta_{(i,A),(j,D)}(t))$. This requires us to analyse the dual process starting from two particles. Unlike for the single-particle system, now all parts of the generator $L_{\text{dual}}$ (see \eqref{p_eqn21}) are non-zero, until the two particles coalesce into a single particle. The two particles repel each other: one particle discourages the other particle to come to the same location. The rates in the two-particle system are:
\begin{itemize}
	\item \textbf{(Migration)} 
	An active particle at site $i$ migrates to site $j$ at rate $a(i,j)$ if there is no active particle at site $j$, otherwise at rate $a(i,j)(1-\tfrac{1}{N_j})$.
	\item $\mathbf{(A\to D)}$ 
	An active particle at site $i$ becomes dormant at site $i$ at rate $\lambda$ if there is no dormant particle at site $i$, otherwise at rate $\lambda(1-\tfrac{1}{M_i})$.
	\item $\mathbf{(D\to A)}$ 
	A dormant particle at site $i$ becomes active at site $i$ at rate $\lambda K_i$ if there is no active particle at site $i$, otherwise at rate $\lambda (K_i-\tfrac{1}{M_i})$.
	\item \textbf{(Coalescence)} 
	An active particle at site $i$ coalesces with another active particle at site $j$ at rate $\frac{1}{N_i}$ when $j=i$, otherwise at rate $\tfrac{a(i,j)}{N_j}$.
\end{itemize}
Note that after coalescence has taken place, there is only one particle left in the system, which evolves as the single-particle system. 

Let $(\xi_1(t),\xi_2(t),c(t))\in\mathbb{S}=\mathcal{S}^*\times\mathcal{S}^*\times\{0,1\}$ be the configuration of the two-particle system at time $t$, where $\mathcal{S}^*=\Z^d\times\{A,D\}$. Here $\xi_1(t)$ and $\xi_2(t)$ represent the location and state of the two particles. The variable $c(t)$ takes value 1 if the two particles have coalesced into a single particle by time $t$, and 0 otherwise. It is necessary to add the extra variable $c(t)$ to the configuration in order to make the process Markovian (the rates depend on whether there are one or two particles in the system). To avoid triviality we assume that $c(0)=0$ with probability 1, i.e., two particles at time 0 are always in a non-coalesced state. We denote the law of the process $(\xi_1(t),\xi_2(t),c(t))_{t\geq 0}$ by $\P^\xi$, where the initial condition is $\xi\in\mathcal{S}^*\times\mathcal{S}^*$. It is to be noted that, since the number of active and dormant particles at a site $i$ at any time are limited by $N_i$ and $M_i$, respectively, the two-particle system is not defined whenever it is started from an initial configuration violating the maximal occupancy of the associated sites. Let $\tau$ be the first time at which the coalescence event has occurred, i.e.,
\begin{equation}\label{def_coal}
	\tau=\inf\{t\geq0 \colon\,c(t)=1\}.
\end{equation}
Note that, conditional on $\tau < t$, $\xi_1(s)=\xi_2(s)$ for all $s\geq t$ with probability 1. Define,
\begin{equation}
	M_{(i,\alpha),(j,\beta)}(t)=
	\begin{cases}
		\tfrac{X_i(t)(X_i(t)-1)}{N_i(N_i-1)} & \text{ if } i=j \text{ and } \alpha=\beta=A,\\[0.2cm]
		\tfrac{X_i(t)X_j(t)}{N_iN_j} & \text{ if } i\neq j \text{ and } \alpha=\beta=A,\\[0.2cm]
		\tfrac{Y_i(t)(Y_i(t)-1)}{M_i(M_i-1)} & \text{ if } i=j \text{ and } \alpha=\beta=D,\\[0.2cm]
		\tfrac{Y_i(t)Y_j(t)}{M_iM_j} & \text{ if } i\neq j \text{ and } \alpha=\beta=D,\\[0.2cm]
		\tfrac{X_i(t)Y_j(t)}{N_iM_j} & \text{ if } \alpha=A \text{ and } \beta=D,\\[0.2cm]
		\tfrac{Y_i(t)X_j(t)}{M_iN_j} & \text{ otherwise,}
	\end{cases}
\end{equation}
where $i,j\in\Z^d$ and $\alpha,\beta\in\{A,D\}$. To avoid ambiguity, we set $M_{(i,\alpha),(j,\beta)}(\cdot)=0$ when $((i,\alpha),(j,\beta))$ is not a valid initial condition for the two-particle system.

\begin{lemma}{\bf [Second moments]} 
	\label{lem_second}
	For every valid initial condition $((i,\alpha),(j,\beta))\in(\Z^d\times\{A,D\})^2$ of the two-particle system,
	\begin{equation}
		\begin{aligned}
			\E\left[M_{(i,\alpha),(j,\beta)}(t)\right] = Q((i,\alpha),(j,\beta),t) 
			&+\sum_{k\in\Z^d}\frac{X_k(0)}{N_k}\,\P^{((i,\alpha),(j,\beta))}\big(\xi_1(t)=(k,A),\tau<t\big)\\
			&+\sum_{k\in\Z^d}\frac{Y_k(0)}{M_k}\,\P^{((i,\alpha),(j,\beta))}\big(\xi_1(t)=(k,D),\tau< t\big),
		\end{aligned}
	\end{equation}
	where 
	\begin{equation}
		\begin{aligned}
			&Q((i,\alpha),(j,\beta),t)\\
			&=\sum_{k\in\Z^d}\frac{X_k(0)(X_k(0)-1)}{N_k(N_k-1)}\,\P^{((i,\alpha),(j,\beta))}(\xi_1(t)=\xi_2(t)=(k,A),\tau\geq t)\\
			&\qquad
			+\sum_{\overset{k,l\in\Z^d}{k\neq l}}\frac{X_k(0)X_l(0)}{N_k N_l}\,\P^{((i,\alpha),(j,\beta))}(\xi_1(t)=(k,A),\xi_2(t)=(l,A),\tau\geq t)\\
			&\qquad 
			+\sum_{k,l\in\Z^d}\frac{X_k(0)Y_l(0)}{N_k M_l}\,\P^{((i,\alpha),(j,\beta))}(\xi_1(t)=(k,A),\xi_2(t)=(l,D),\tau\geq t)\\
			&\qquad
			+\sum_{k\in\Z^d}\frac{Y_k(0)(Y_k(0)-1)}{M_k(M_k-1)}\,\P^{((i,\alpha),(j,\beta))}(\xi_1(t)=\xi_2(t)=(k,D),\tau\geq t)\\
			&\qquad
			+\sum_{\overset{k,l\in\Z^d}{k\neq l}}\frac{Y_k(0)Y_l(0)}{M_k M_l}\,\P^{((i,\alpha),(j,\beta))}(\xi_1(t)=(k,D),\xi_2(t)=(l,D),\tau\geq t).
		\end{aligned}
	\end{equation}
\end{lemma}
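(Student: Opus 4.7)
The plan is to recognize $M_{(i,\alpha),(j,\beta)}(t)$ as a value of the duality function $D$ from Lemma~\ref{lem2}, apply Theorem~\ref{T.Mul_Dual}, and decompose the resulting dual-side expectation according to whether coalescence in the dual has occurred by time $t$.

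First, by direct case analysis one checks the key identity
\begin{equation*}
M_{(i,\alpha),(j,\beta)}(t)=D\big(Z(t);\vec{\delta}_{i,\alpha}+\vec{\delta}_{j,\beta}\big).
\end{equation*}
When $(i,\alpha)\ne(j,\beta)$, the product defining $D$ factorizes into two single-site factors of the form $X_k/N_k$ or $Y_k/M_k$, matching the mixed entries of $M$; when $(i,\alpha)=(j,\beta)$ (so the two dual particles sit in the same slot at site $i$), $D$ produces a binomial coefficient $\binom{X_i}{2}/\binom{N_i}{2}=X_i(X_i-1)/[N_i(N_i-1)]$ (or the $Y_i,M_i$ analogue), matching the same-site entries of $M$. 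Validity of the initial condition is exactly what is needed for $\vec{\delta}_{i,\alpha}+\vec{\delta}_{j,\beta}$ to lie in $\mathcal{X}^*$ in the sense of \eqref{eq:addition-subtraction}.

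Theorem~\ref{T.Mul_Dual} (applicable by Corollary~\ref{C.Mul_Dual} under the standing hypothesis $(N_i)_i\in\mathcal{N}$) then yields
\begin{equation*}
\E\big[M_{(i,\alpha),(j,\beta)}(t)\big]=\E^{\vec{\delta}_{i,\alpha}+\vec{\delta}_{j,\beta}}\big[D(Z(0);Z^*(t))\big].
\end{equation*}
Split this expectation into the contributions from $\{\tau\geq t\}$ and $\{\tau<t\}$, where $\tau$ is the coalescence time defined in \eqref{def_coal}. On $\{\tau\geq t\}$ the dual still carries two particles at $\xi_1(t),\xi_2(t)$; conditioning on their joint location and $A/D$ state, and partitioning into the five disjoint configurations (both active same site, both active distinct sites, one active and one dormant, both dormant same site, both dormant distinct sites), the product form of $D$ evaluates to precisely the five weights appearing in the definition of $Q$, and summing over the corresponding site labels reproduces $Q((i,\alpha),(j,\beta),t)$.

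On $\{\tau<t\}$ the dual has coalesced to a single particle at $\xi_1(t)=\xi_2(t)$; conditioning on its location and state and using $D(Z(0);\vec{\delta}_{k,A})=X_k(0)/N_k$, $D(Z(0);\vec{\delta}_{k,D})=Y_k(0)/M_k$ produces the two remaining sums in the statement. Combining the two cases yields the claimed decomposition. The only delicate point is the bookkeeping for the two-particle term: the same-site configurations $\xi_1(t)=\xi_2(t)$ (with $\tau\geq t$, so not yet coalesced) must be separated from the distinct-site configurations so that the binomial-coefficient denominators $N_k(N_k-1)$ and $M_k(M_k-1)$ arise naturally; likewise, the labeling convention on the pair $(\xi_1,\xi_2)$ must be handled consistently so that each unordered two-particle configuration is counted with the correct weight.
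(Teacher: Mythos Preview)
Your proof is correct and follows essentially the same approach as the paper: recognize $M_{(i,\alpha),(j,\beta)}(t)=D(Z(t);\vec{\delta}_{i,\alpha}+\vec{\delta}_{j,\beta})$, apply the duality relation of Theorem~\ref{T.Mul_Dual}, and then evaluate the dual-side expectation by conditioning on the configuration of the two-particle system at time $t$. The paper's version is terser---it simply says ``depending on the configuration of the process at time $t$, the right-hand side equals the desired expression''---whereas you spell out the $\{\tau<t\}$ versus $\{\tau\geq t\}$ split and the five-case bookkeeping explicitly, but the argument is the same.
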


\begin{proof}[Proof.]
	Note that $M_{(i,\alpha),(j,\beta)}(t)=D(Z(t);\vec{\delta}_{i,\alpha}+\vec{\delta}_{j,\beta})$, where $D$ is the duality function. So, via the duality relation, we have
	\begin{equation}
		\label{mom}
		\E\left[M_{(i,\alpha),(j,\beta)}(t)\right]
		= \E^{\vec{\delta}_{i,\alpha}+\vec{\delta}_{j,\beta}}\left[\prod_{k\in\Z^d}\frac{\binom{X_k(0)}{n_k(t)}}{\binom{N_k}{n_k(t)}} 
		\frac{\binom{Y_k(0)}{m_k(t)}}{\binom{M_k}{m_k(t)}}\mathbf{1}_{\{n_k(t)\leq X_k(0),m_k(t)\leq Y_k(0)\}}\right],
	\end{equation}
	where the expectation in the right-hand side is taken with respect to the dual process when the initial condition has one particle at site $i$ with state $\alpha$ and one particle at site $j$ with state $\beta$, which has law $\P^{((i,\alpha),(j,\beta))}$. Depending on the configuration of the process at time $t$, the right-hand side of \eqref{mom} equals the desired expression.
\end{proof}

The following lemma provides a nice comparison between the one-particle and two-particle system.

\begin{lemma}{\bf [Correlation inequality]}
	\label{lem_corr}
	Let $(\xi(t))_{t\geq 0}$ and $(\xi_1(t),\xi_2(t),c(t))_{t\geq 0}$ be the processes defined in Section \ref{dual_one} and \ref{dual_two}, respectively, and $\tau$ the first time of coalescence defined in \eqref{def_coal}. Then, for any valid initial condition $((i,\alpha),(j,\beta))\in (\Z^d\times\{A,D\})^2$ of the two-particle system and any $(k,\gamma)\in\Z^d\times\{A,D\}$,
	\begin{equation}
		\P^{(i,\alpha)}(\xi(t)=(k,\gamma))\geq \P^{((i,\alpha),(j,\beta))}(\xi_1(t)=(k,\gamma),\tau<t).
	\end{equation}
\end{lemma}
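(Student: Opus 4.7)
The plan is to construct a graphical Harris coupling of the single-particle process $\xi$ (starting at $(i,\alpha)$) and the two-particle process $(\xi_1,\xi_2,c)$ (starting at $((i,\alpha),(j,\beta))$) on one probability space, and prove that on the event $\{\tau<t\}$ the identification $\xi(t)=\xi_1(t)$ holds. Once this identification is in place the lemma follows from the trivial bound
\[
\P^{(i,\alpha)}(\xi(t)=(k,\gamma))\ \ge\ \P(\xi(t)=(k,\gamma),\,\tau<t)\ =\ \P^{((i,\alpha),(j,\beta))}(\xi_1(t)=(k,\gamma),\,\tau<t).
\]
For the coupling I would use the labeled graphical picture described just before Figure~\ref{fig:2}: attach an independent Poisson clock to each directed labeled transition between particular labeled locations inside the reservoirs (rate $a(i,j)/N_j$ on each migration edge $(i,A,l)\to(j,A,l')$, rate $\lambda/M_i$ on each $(i,A,l)\to(i,D,l')$ edge, and rate $\lambda K_i/N_i$ on each $(i,D,l)\to(i,A,l')$ edge), and drive both systems with this common family of clocks. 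I initialise $\xi$ and $\xi_1$ at the \emph{same} labeled location compatible with $(i,\alpha)$ (labels inside a reservoir are arbitrary, so this is allowed), and place $\xi_2$ at any other labeled location compatible with $(j,\beta)$.

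The heart of the argument is the pathwise invariant
\[
\xi(s)\in\{\xi_1(s),\xi_2(s)\}\qquad \forall\,0\le s\le \tau,
\]
read in the labeled sense. The proof is by induction on the successive Poisson arrivals, and the bookkeeping separates into four sub-mechanisms: a successful migration, in which $\xi$ and the matched dual particle move to the same target label and the invariant is trivially preserved; a migration whose target label is occupied by the other dual particle, which triggers coalescence and makes $\xi$ and the migrating particle both land on the occupied target, so the invariant is preserved and the coalescence time is realised simultaneously; a successful A$\leftrightarrow$D transition, where $\xi$ and the matched particle again move identically; and the only subtle case, a \emph{blocked} A$\leftrightarrow$D transition. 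In the blocked case $\xi$ jumps to the target label while the dual particle stays put, so $\xi$ apparently desynchronises from $\xi_1$ --- but by the very definition of blocking, the target label is occupied by the other dual particle, so after the event $\xi$ coincides with that other dual particle and the invariant is restored, with the roles of $\xi_1$ and $\xi_2$ swapped. A symmetric analysis handles arrivals whose source label is occupied by $\xi_2$ rather than $\xi_1$.

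Given the invariant, the rest is immediate. At the coalescence time one has $\xi_1(\tau)=\xi_2(\tau)$, so the invariant collapses to $\xi(\tau)=\xi_1(\tau)$. For $s>\tau$ only a single particle remains in the two-particle system, evolving as a single-particle process driven by exactly the same shared clocks as $\xi$, hence $\xi(s)=\xi_1(s)$ for every $s\ge \tau$ by a trivial induction on arrivals; in particular $\xi(t)=\xi_1(t)$ on $\{\tau<t\}$, as required. The main obstacle is the case-analytic verification of the invariant. The conceptually important observation that makes it go through is that the events which break the identity $\xi=\xi_1$ --- namely the A$\leftrightarrow$D blockings --- are precisely the events that pin $\xi$ onto the labeled location of $\xi_2$, so $\xi$ remains at all times equal, in labels, to at least one of the two dual particles and therefore inevitably re-synchronises with $\xi_1$ at the merger time $\tau$.
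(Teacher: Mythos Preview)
Your coupling argument is correct and gives a valid proof of the lemma. The invariant $\xi(s)\in\{\xi_1(s),\xi_2(s)\}$ at the level of labeled locations is indeed preserved under every clock arrival, and the swap phenomenon you isolate in the blocked $A\!\leftrightarrow\!D$ case is exactly the mechanism that keeps the invariant alive. Your final deduction, that $\xi(\tau)=\xi_1(\tau)$ at the merger and then $\xi\equiv\xi_1$ afterwards because both obey unobstructed single-particle dynamics under identical clocks, is sound.

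The paper, however, proves the lemma by an entirely different route. It does not couple at all. Instead it fixes $(k,\gamma)$ and chooses the degenerate initial configuration $\eta$ for the \emph{original} process $Z$ in which colony $k$ is monochromatic of type $\heartsuit$ in the relevant reservoir and all other entries are zero. Plugging this $\eta$ into the first- and second-moment formulas of Lemmas~\ref{lem_first} and~\ref{lem_second} collapses the sums and yields
\[
\E_\eta\!\left[\tfrac{X_i(t)}{N_i}-M_{(i,A),(j,\beta)}(t)\right]
=\big[\P^{(i,A)}(\xi(t)=(k,\gamma))-\P^{((i,A),(j,\beta))}(\xi_1(t)=(k,\gamma),\tau<t)\big]-Q((i,A),(j,\beta),t).
\]
The left-hand side is manifestly nonnegative (each $M_{(i,\alpha),(j,\beta)}(t)$ is pointwise dominated by $X_i(t)/N_i$), and $Q\ge 0$ by inspection, so the bracketed probability difference is nonnegative. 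The case $\alpha=D$ is handled symmetrically.

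The comparison: the paper's argument is shorter because it recycles the duality machinery already in place, but it is indirect and only yields the distributional inequality. Your coupling is more work to set up, but it is probabilistically transparent, avoids any appeal to the forward process $Z$, and in fact proves the stronger \emph{pathwise} statement that $\xi(t)=\xi_1(t)$ on $\{\tau<t\}$ under the joint law. Either approach is perfectly acceptable here.
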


\begin{proof}[Proof.]
	Let $\alpha=A$ and $i,j,k\in\Z^d$ be fixed. Let $\eta=Z(0)$ be the initial configuration defined as,
	\begin{equation}
		(X_n(0),Y_n(0))=
		\begin{cases}
			(N_k,0) &\text{ if } n=k \text{ and } \gamma=A,\\
			(0,M_k) &\text{ if } n=k \text{ and } \gamma=D,\\
			(0,0) &\text{ otherwise,}
		\end{cases}
		\qquad \forall \,n\in\Z^d.
	\end{equation}
	Combining Lemma \ref{lem_first} and Lemma \ref{lem_second}, we get
	\begin{equation}
		\label{gen_var}
		\begin{aligned}
			&\E_\eta\left[\tfrac{X_i(t)}{N_i} - M_{(i,A),(j,\beta)}(t)\right]\\
			\quad&=\sum_{n\in\Z^d}\frac{X_n(0)}{N_n}\big[\P^{(i,A)}(\xi(t)=(n,A))-\P^{((i,A),(j,\beta))}(\xi_1(t)=(n,A),\tau<t)\big]\\
			&\qquad +\sum_{n\in\Z^d}\frac{Y_n(0)}{M_n}\big[\P^{(i,A)}(\xi(t)=(n,D))-\P^{((i,A),(j,\beta))}(\xi_1(t)=(n,D),\tau< t)\big]\\
			&\qquad - Q((i,A),(j,\beta),t)\\
			&= \big[\P^{(i,A)}(\xi(t)=(k,\gamma))-\P^{((i,A),(j,\beta))}(\xi_1(t)=(k,\gamma),\tau<t)\big]-Q((i,A),(j,\beta),t).
		\end{aligned}
	\end{equation}
	Since $Q((i,A),(j,\beta),t)\geq 0$ and the left-hand quantity is positive, we get
	\begin{equation}
		\P^{(i,A)}(\xi(t)=(k,\gamma))\geq \P^{((i,A),(j,\beta))}(\xi_1(t)=(k,\gamma),\tau<t).
	\end{equation}
	Replacing the left-quantity in \eqref{gen_var} with $\E_\eta[\tfrac{Y_i(t)}{M_i}-M_{(i,D),(j,\beta)}(t)]$ and using the same arguments, we see that the inequality for $\alpha=D$ follows.
\end{proof}

\subsection{Proof of clustering criterion}
\label{ss.couppr}

\begin{proof}[Proof of Theorem \ref{T.Dichotomy}.]
	``$\Longleftarrow$'' First we show that, if $((i,A),(j,\beta))\in(\Z^d\times\{A,D\})^2$ is a valid initial condition for the two-particle system, then
	\begin{equation}
		\label{p_eqn22}
		\lim\limits_{t\to\infty}\E\left[\frac{X_i(t)}{N_i} - M_{(i,A),(j,\beta)}(t)\right] = 0,
		\quad\lim\limits_{t\to\infty}\E\left[\frac{Y_j(t)}{M_j} - M_{(i,A),(j,\beta)}(t)\right] = 0.
	\end{equation}
	Combining Lemma \ref{lem_first} and Lemma \ref{lem_second}, we have
	\begin{equation}
		\label{p_eqn23}
		\begin{aligned}
			&\E\left[\tfrac{X_i(t)}{N_i} - M_{(i,A),(j,\beta)}(t)\right]\\
			\quad&=\sum_{k\in\Z^d}\frac{X_k(0)}{N_k}\big[\P^{(i,A)}(\xi(t)=(k,A))-\P^{((i,A),(j,\beta))}(\xi_1(t)=(k,A),\tau<t)\big]\\
			&\qquad +\sum_{k\in\Z^d}\frac{Y_k(0)}{M_k}\big[\P^{(i,A)}(\xi(t)=(k,D))-\P^{((i,A),(j,\beta))}(\xi_1(t)=(k,D),\tau< t)\big]\\
			&\qquad - Q((i,A),(j,\beta),t).
		\end{aligned}
	\end{equation}
	Using Lemma \ref{lem_corr} and the fact that $Q((i,A),(j,\beta),t)\geq 0$, we have the following:
	\begin{equation}
		\begin{aligned}
			\E\left[\frac{X_i(t)}{N_i} - M_{(i,A),(j,\alpha)}(t)\right] &\leq \sum_{\overset{S\in\{A,D\}}{k\in\Z^d}}
			\big|\P^{(i,A)}(\xi(t)=(k,S))-\P^{((i,A),(j,\beta))}(\xi_1(t)=(k,S),\tau<t)\big|\\
			&=\sum_{\overset{S\in\{A,D\}}{k\in\Z^d}}
			\big[\P^{(i,A)}(\xi(t)=(k,S))-\P^{((i,A),(j,\beta))}(\xi_1(t)=(k,S),\tau<t)\big]\\
			&=1-\P^{((i,A),(j,\beta))}(\tau<t) =\P^{((i,A),(j,\beta))}(\tau\geq t).
		\end{aligned}
	\end{equation}
	Since, by assumption, $\tau<\infty$ with probability 1 irrespective of the initial configuration of the two-particle system, and since the left-hand quantity is positive, we have $\E\big[\tfrac{X_i(t)}{N_i} - M_{(i,A),(j,\beta)}(t)\big]\to 0$ as $t\to\infty$. By a similar argument the other part of \eqref{p_eqn22} is proved as well.
	
	If $((i,A),(j,A))$ is a valid initial condition for the two-particle system, then by using \eqref{p_eqn19}--\eqref{p_eqn20} and \eqref{p_eqn22}, we have
	\begin{equation}
		\begin{aligned}
			\lim_{t\to\infty}\E\Big(\Delta_{(i,A),(j,A)}(t)\Big)
			=\lim_{t\to\infty}\E\left[\tfrac{X_i(t)}{N_i} - M_{(i,A),(j,A)}(t)\right] 
			+ \lim_{t\to\infty}\E\left[\tfrac{X_j(t)}{N_j} - M_{(j,A),(i,A)}(t)\right] = 0.
		\end{aligned}
	\end{equation}
	If $((i,A),(j,A))$ is not a valid initial condition, then we must have that $i=j$ and $N_i=1$, and so $\Delta_{(i,A),(j,A)}(t)=0$ by definition. Thus, for any $i,j\in\Z^d$,
	\begin{equation}
		\lim_{t\to\infty}\E\Big[\Delta_{(i,A),(j,A)}(t)\Big]=0.
	\end{equation}
	Since $((i,A),(j,D))$ is always a valid initial condition for the two-particle system, we also have
	\begin{equation}
		\lim_{t\to\infty}\E\Big[\Delta_{(i,A),(j,D)}(t)\Big]
		=\lim_{t\to\infty}\E\left[\tfrac{X_i(t)}{N_i} - M_{(i,A),(j,D)}(t)\right] 
		+ \lim_{t\to\infty}\E\left[\tfrac{Y_j(t)}{M_j} - M_{(i,A),(j,D)}(t)\right]= 0,
	\end{equation}
	and hence from \eqref{p_eqn12} we have that, for any $i,j\in\Z^d$, $\E(\Delta_{i,j}(t))\to 0$ as $t\to\infty$, which proves the claim.
	
	\medskip\noindent
	``$\Longrightarrow$'' Suppose that the system clusters for any initial configuration $Z(0)\in\mathcal{X}$. Then, by dominated convergence, the system clusters for any initial distribution of $Z(0)$ as well. Fix $\theta\in(0,1)$, and let the distribution of $Z(0)$ be $\nu_{\theta}$, where
	\begin{equation}
		\begin{aligned}
			\nu_{\theta} = \bigotimes_{i\in\Z^d} (\mathrm{Binomial}(N_i,\theta)\otimes \mathrm{Binomial}(M_i,\theta)).
		\end{aligned}
	\end{equation}
	We will prove via contradiction that in the dual two particles with arbitrary valid initial states coalesce with probability 1, i.e., $\tau < \infty$ with probability 1. Indeed, suppose that this is not true, i.e., for some valid initial configuration $(\xi_1,\xi_2)\in\mathcal{S}^*\times\mathcal{S}^*$ of the two-particle system we have $\P^{(\xi_1,\xi_2)}(\tau=\infty)>0$, where $\mathcal{S}^*=\Z^d\times\{A,D\}$. Since the dual process with two particles is irreducible (any valid configuration is accessible), we have $\P^{\xi}(\tau=\infty)>0$ for any valid initial condition $\xi\in \mathcal{S}^*\times\mathcal{S}^*$. Let $\rho:=\P^{((i,A),(i,D))}(\tau=\infty)>0$, where $i\in\Z^d$ is fixed. Note that $((i,A),(i,D))$ is always a valid initial condition for the two-particle system, since $N_i,M_i\geq 1$. Let $\P^{(i,A)}$ be the law of the single-particle process $(\xi(t))_{t\geq 0}$ started with initial condition $(i,A)$.

	Since, by assumption, $\E_{\nu_\theta}\left[\Delta_{(i,A),(i,D)}(t)\right]\to 0$ as $t\to\infty$,
	we must have
	\begin{equation}
		\label{p_eqn24}
		\lim_{t\to\infty}\E_{\nu_{\theta}}\left[\tfrac{X_i(t)(M_i-Y_i(t))}{N_iM_i}\right]=0.
	\end{equation}
	Since $((i,A),(i,D))$ is a valid initial condition for the two-particle system, by using \eqref{p_eqn23} with $\nu_{\theta}$ as initial distribution we get
	\begin{equation}
		\label{p_eqn25}
		\begin{aligned}
			\E_{\nu_{\theta}}\left[\tfrac{X_i(t)(M_i-Y_i(t))}{N_iM_i}\right]
			&=\E_{\nu_{\theta}}\left[\tfrac{X_i(t)}{N_i} - M_{(i,A),(i,D)}(t)\right]\\
			&=\sum_{n\in\Z^d}\E_{\nu_{\theta}}\left[\frac{X_n(0)}{N_n}\right] \big[\P^{(i,A)}(\xi(t)=(n,A))
			-\P^{((i,A),(i,D))}(\xi_1(t)=(n,A),\tau<t)\big]\\
			&\quad +\sum_{n\in\Z^d}\E_{\nu_{\theta}}\left[\frac{Y_n(0)}{M_n}\right]\big[\P^{(i,A)}(\xi(t)=(n,D))
			-\P^{((i,A),(i,D))}(\xi_1(t)=(n,D),\tau< t)\big]\\
			&\quad - \E_{\nu_{\theta}}\big[Q((i,A),(i,D),t)\big],
		\end{aligned}
	\end{equation}
	where $Q(\cdot,\,\cdot,\,\cdot)$ is defined as in Lemma \eqref{lem_second}. Since, under $\nu_{\theta}$, $(X_n(0))_{n\in\Z^d}$, $(Y_n(0))_{n\in\Z^d}$ are all independent of each other and $X_n(0)$ and $Y_n(0)$ have distributions $\mathrm{Binomial}(N_n,\theta)$ and $\mathrm{Binomial}(M_n,\theta)$, respectively, we have
	\begin{equation}
		\begin{aligned}
			&\E_{\nu_{\theta}}\left[\frac{X_n(0)}{N_n}\right] = \E_{\nu_{\theta}}\left[\frac{Y_n(0)}{M_n}\right] = \theta,\\
			&\E_{\nu_{\theta}}\left[\frac{X_n(0)(X_n(0)-1)}{N_n(N_n-1)}\right] =\theta^2&\text{ if } N_n\neq 1,\\
			&\E_{\nu_{\theta}}\left[\frac{Y_n(0)(Y_n(0)-1)}{M_n(M_n-1)}\right] =\theta^2&\text{ if } M_n\neq 1.
		\end{aligned}
	\end{equation}
	Hence $\E_{\nu_{\theta}}\left[Q((i,A),(i,D),t)\right] = \theta^2\,\P^{((i,A),(i,D))}(\tau\geq t)$, and thus \eqref{p_eqn25} reduces to
	\begin{equation}
		\label{last}
		\begin{aligned}
			\E_{\nu_{\theta}}\left[\tfrac{X_i(t)(M_i-Y_i(t))}{N_iM_i}\right]
			&=\theta\,\left[1-\P^{((i,A),(i,D))}(\tau < t)\right] -\theta^2\,\P^{((i,A),(i,D))}(\tau \geq t)\\
			&=\theta(1-\theta)\,\P^{((i,A),(i,D))}(\tau \geq t).
		\end{aligned}
	\end{equation} 
	By \eqref{p_eqn24}, the left-hand side of \eqref{last} tends to $0$ as $t\to\infty$. Because $\theta \in (0,1)$, we have
	\begin{equation}
		\rho = \lim\limits_{t\to\infty}\P^{((i,A),(i,D))}(\tau \geq t) = 0,
	\end{equation}
	which is a contradiction.
\end{proof}


\bibliographystyle{spmpsci}
\bibliography{reference-arxiv-final} 

 
\end{document}